\newtheorem{thm}{Theorem}[section]
\newtheorem{lem}[thm]{Lemma}
\newtheorem{prop}[thm]{Proposition}
\newtheorem{ex}[thm]{Example}
\theoremstyle{definition}
\newtheorem{de}[thm]{Definition}
\newtheorem{rem}[thm]{Remark}
\newtheorem*{rem*}{Remark}
\newtheorem{ques}{Question}
\numberwithin{equation}{section}
\begin{document}
\title{Point transitivity, $\Delta$-transitivity and multi-minimality}

\author[Z.~Chen]{Zhijing Chen}
\address[Z.~Chen]{Department of Mathematics, Sun Yat-Sen University, Guangzhou 510275, P. R. China}
\email{chzhjing@mail2.sysu.edu.cn}
\author[J.~Li]{Jian Li}
\address[J.~Li]{Department of Mathematics, Shantou University, Shantou, Guangdong 515063, P.R. China}
\thanks{Corresponding author: Jian Li (lijian09@mail.ustc.edu.cn)}
\email{lijian09@mail.ustc.edu.cn}
\author[J.~L\"u]{Jie L\"U}
\address[J.~L\"u]{School of Mathematics, South China Normal University, Guangzhou 510631, P. R. China}
\email{ljie@scnu.edu.cn}

\subjclass[2010]{54H20, 37B40, 58K15, 37B45.}
\keywords{Point transitivity, multi-transitivity, $\Delta$-transitivity, multi-minimality.}
\date{\today}

\begin{abstract}
Let $(X, f)$ be a topological dynamical system and $\mathcal{F}$ be a Furstenberg family
(a collection of subsets of $\mathbb{N}$ with hereditary upward property).
A point $x\in X$ is called an $\mathcal{F}$-transitive point if
for every non-empty open subset $U$ of $X$ the entering time set of $x$ into $U$,
$\{n\in\mathbb{N}: f^{n}(x)\in U\}$, is in $\mathcal{F}$;
the system $(X,f)$ is called $\mathcal {F}$-point transitive if there exists some $\mathcal{F}$-transitive point.
In this paper, we first discuss the connection between $\mathcal{F}$-point transitivity and $\mathcal{F}$-transitivity,
and show that weakly mixing and strongly mixing systems can be characterized by $\mathcal{F}$-point transitivity,
completing results in [Transitive points via Furstenberg family, Topology Appl. 158 (2011), 2221--2231].
We also show that multi-transitivity,
$\Delta$-transitivity and multi-minimality can also be characterized by $\mathcal{F}$-point transitivity,
answering two questions proposed by  Kwietniak and Oprocha
[On weak mixing, minimality and weak disjointness of all iterates,  Erg. Th. Dynam. Syst., 32 (2012), 1661--1672].
\end{abstract}

\maketitle


\section{Introduction}

Throughout this paper a \emph{topological dynamical system} (or just \emph{dynamical system}, \emph{system})
is a pair $(X, f)$, where $X$ is a compact metric space
and $f:X\rightarrow X$ is a continuous map.
A topological dynamical system $(X,f)$ is called \emph{transitive}
if for every two non-empty open subsets $U$ and $V$ of $X$
there exists a positive integer $k$ such that $U\cap f^{-k}(V)$ is not empty.

It is well known that the study of transitive systems and its classification plays a big role in topological dynamics.
There are several ways to classify transitive systems.
One of them started  by  Furstenberg is to classify transitive systems
by the hitting time sets of two non-empty open subsets.
Let $\mathcal {F}$ be a Furstenberg family (a collection of subsets of $\mathbb{N}$ with hereditary upward property).
We call $(X, f)$ is \emph{$\mathcal {F}$-transitive} if for every two non-empty open subsets $U, V$ of $X$
the hitting time set of $U$ and $V$, $N(U,V):=\{n\in\mathbb{N}: U\cap f^{-n}(V)\neq\emptyset\}$, is in $\mathcal {F}$.

We say that $(X,f)$ is \emph{weakly mixing} if the product system $(X\times X,f\times f)$ is transitive.
In his seminal paper \cite{Furstenberg-1967},
Furstenberg showed that a topological dynamical system $(X,f)$ is weakly mixing if and only if it is \{thick
sets\}-transitive.
The authors in~\cite{A97}, \cite{G04}, \cite{HY04} and~\cite{W-Huang-X-Ye-2005}
have successfully classified many transitive systems by using this way.
However, proper families have not been found for some important classes of transitive systems
such as M-systems and E-systems.
Recently, the second author of this paper proposed a new way in~\cite{L2011},
called $\mathcal {F}$-point transitivity, to classify these systems.
Let $\mathcal{F}$ be a Furstenberg family. A point $x\in X$ is called an \emph{$\mathcal {F}$-transitive point} if
for every non-empty open subset $U$ of $X$ the entering time set of $x$ into $U$,
$N(x,U):=\{n\in \mathbb{N}: f^{n}(x) \in U\}$,
is in $\mathcal {F}$;
the system $(X,f)$ is called \emph{$\mathcal {F}$-point transitive} if there exists some $\mathcal {F}$-transitive point.
It is shown in~\cite{L2011} that E-systems, M-systems, weakly mixing E-systems, weakly mixing M-systems
and HY-systems can be characterized by $\mathcal{F}$-point transitivity.
But the following problem is still open.

\begin{ques}~\label{ques:wm}
Can weakly mixing systems be characterized by $\mathcal{F}$-point transitivity?
\end{ques}

In section 3, we first discuss  the connection between $\mathcal {F}$-point transitivity
and $\mathcal {F}$-transitivity, and show that weakly mixing systems and strongly mixing systems
can be also characterized by $\mathcal{F}$-point transitivity, giving a positive answer to Question~\ref{ques:wm}.

In fact, our characterization of weak mixing also answers the Problem~1 in~\cite{Erdmann-Peris-10}
in the framework of hypercyclic operators on a Fr\'echet space.
A \emph{linear dynamical systems} is a pair $(X,T)$, where $X$ is a Fr\'echet space
(or more general a topological vector space) and $T:X\to X$ is a continuous and linear operator.
We refer the reader to~\cite{Erdmann-Peris-11} for the details
concerning linear dynamical systems.
In~\cite{Bayart-Grivaux-06}, Bayart and Grivaux introduced the notion of frequently hypercyclic
operators, in other words, $\{$positive lower density sets$\}$-point transitivity in our setting.
It is shown in~\cite{Erdmann-Peris-05} that frequently hypercyclic operators are weakly mixing.
Latter, several authors show that $\{$positive upper Banach density sets$\}$-point
transitive operators are weakly mixing (see~\cite{Moothathu-2009} or~\cite{Peris-05}).
The authors in~\cite{Bayart-09} studied in detail how fast the integers of the sets $N(x,U)$ could
increase to ensure that the operator is weakly mixing.
In~\cite{Erdmann-Peris-10}, Grosse-Erdmann and Peris asked that
within the framework of hypercyclic operators on a Fr\'echet space, is there a `nice' condition
expressed in terms of the sets $N(x,U)$ that characterizes the weakly mixing property?
Following Birkhoff transitivity theorem (see ~\cite[Theorem 2.19]{Erdmann-Peris-11}),
our characterization of weak mixing can be applied to characterize weak mixing operators.

In~\cite{FW78}, Furstenberg and Weiss showed that the famous
topological multiple recurrence theorem as following,
which can be applied to prove the van der Waerden theorem in combinatorics.
\begin{thm}[\cite{FW78,F81}]
Let $(X,f)$ be a minimal dynamical system.
Then for each $m\in \mathbb{N}$, there exists a residual subset $Y$ of $X$ such that
for every point $x\in Y$ the diagonal $m$-tuple $(x,x,\dotsc, x)$ is a recurrent point in $X^m$
under the action of $f\times f^2\times\dotsb\times f^m$,
that is, there exists a sequence $\{n_k\}$ of positive integers such that
\[\lim_{k\to\infty} f^{in_k}(x)=x, \text{ for } i =1,2,\dotsc,m.\]
\end{thm}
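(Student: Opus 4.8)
The plan is to prove that the set
\[
R=\bigl\{x\in X:\ (x,x,\dotsc,x)\ \text{is a recurrent point of}\ X^m\ \text{under}\ T\bigr\},
\]
where $T:=f\times f^{2}\times\dotsb\times f^{m}$, is residual in $X$; one may then take $Y=R$. First I would record a $G_\delta$ description of $R$. For $k\in\mathbb{N}$ put
\[
W_k=\bigcup_{n\ge 1}\bigl\{x\in X:\ d(f^{in}(x),x)<\tfrac1k\ \text{for all}\ 1\le i\le m\bigr\}.
\]
For each fixed $n$ the set in the union is open, since $x\mapsto\max_{1\le i\le m}d(f^{in}(x),x)$ is continuous; hence every $W_k$ is open. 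One checks that $R=\bigcap_{k\ge 1}W_k$: the inclusion $\subseteq$ is immediate from the definition of recurrence, and conversely, if $x\in\bigcap_kW_k$, choosing $n_k\ge 1$ with $d(f^{in_k}(x),x)<1/k$ for all $i\le m$, then either $(n_k)$ is unbounded, in which case a subsequence tending to infinity witnesses recurrence of the diagonal point, or $(n_k)$ is bounded, in which case some value $n_0$ recurs infinitely often, forcing $f^{in_0}(x)=x$ for all $i\le m$, so that $\{jn_0:j\ge 1\}$ witnesses recurrence. Thus it suffices to prove that each $W_k$ is dense, for then $R$ is a dense $G_\delta$ by the Baire category theorem.

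The substantial ingredient — and the main obstacle — is to exhibit a single point of $R$, i.e.\ to show $R\neq\emptyset$. This is exactly the multiple Birkhoff recurrence theorem applied to the pairwise commuting continuous maps $f,f^{2},\dotsc,f^{m}$: every compact metric system contains a point $x_0$ admitting a sequence $n_k\to\infty$ with $f^{in_k}(x_0)\to x_0$ for every $1\le i\le m$. I would cite \cite{FW78,F81} for this (it is proved there by induction on $m$, the case $m=1$ being the classical Birkhoff recurrence theorem, and the inductive step making essential use of the commutativity of the iterates $f^i$ and of passing to minimal subsystems of auxiliary systems). Since $(X,f)$ is minimal, any point produced in this way already lies in $X$, so $R\neq\emptyset$.

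It then remains to upgrade ``$R\neq\emptyset$'' to ``$R$ dense'', which is the short part. The key observation is that $R$ is forward invariant: if $x\in R$ with witnessing sequence $(n_k)$, then continuity of $f$ gives $f^{in_k}(f(x))=f(f^{in_k}(x))\to f(x)$ for each $i\le m$, so $f(x)\in R$ with the same sequence. Hence the forward orbit $\{f^{j}(x_0):j\ge 0\}$ of any $x_0\in R$ is contained in $R$, and since $(X,f)$ is minimal this orbit is dense in $X$, so $\overline{R}=X$. In particular every $W_k\supseteq R$ is dense and open, so $R=\bigcap_{k\ge 1}W_k$ is a dense $G_\delta$, hence residual, and $Y=R$ does the job. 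In short, all the content sits in the multiple recurrence input, while the passage to a residual set is a short Baire-category argument resting on the forward invariance of $R$.
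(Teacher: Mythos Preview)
The paper does not give its own proof of this theorem: it is stated in the Introduction with a citation to \cite{FW78,F81} as background for the later discussion of multi-transitivity and $\Delta$-transitivity, and no argument is supplied. So there is nothing in the paper to compare your proposal against.

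As for the proposal itself, your argument is correct. The decomposition $R=\bigcap_{k}W_k$ with each $W_k$ open is the standard way to see that the set of multiply recurrent diagonal points is $G_\delta$, and your treatment of the bounded-versus-unbounded dichotomy for the $n_k$'s is fine. The forward invariance of $R$ together with minimality of $(X,f)$ indeed gives density of $R$, so $R$ is a dense $G_\delta$. One small remark: once you have shown directly that $R$ itself is dense (because it contains the dense orbit of $x_0$), the Baire category theorem is not needed at all; $R$ is already a dense $G_\delta$, hence residual, and the detour through ``each $W_k$ is dense'' is superfluous.

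The only substantive input you invoke is the existence of a single multiply recurrent point, which is precisely the Furstenberg--Weiss multiple recurrence theorem in its existence form. You are honest that this is the entire content and cite \cite{FW78,F81} for it; the upgrade from one point to a residual set under minimality is, as you say, the short part. This is exactly the logical structure of the original result, so your write-up is an accurate summary of how the theorem is obtained.
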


This result highlights the importance of researching the properties of $f\times f^2\times\dotsb\times f^n$.
Using the structure theory of minimal systems, Glasner proved the following interesting result.

\begin{thm}[\cite{G94,E-Glasner-2000}]\label{thm:G2000}
Let $(X,f)$ be a weakly mixing minimal system.
Then for each $m\in \mathbb{N}$, there exists a residual subset $Y$ of $X$ such that
for every point $x\in Y$  the diagonal $m$-tuple $(x,x,\dotsc, x)$ has a dense orbit in $X^m$
under the action of $f\times f^2\times\dotsb\times f^m$, that is
\[\{(f^n(x), f^{2n}(x),\dotsc,f^{mn}(x)):\ n=0,1,\dotsc\}\text{ is dense in }X^m.\]
\end{thm}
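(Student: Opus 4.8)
The plan is to split the statement into a soft Baire-category layer and one hard transitivity assertion about the map $g:=f\times f^{2}\times\dotsb\times f^{m}$ on $X^{m}$, and then to settle the latter using the structure theory of minimal systems. Write $\Delta(x)=(x,\dotsc,x)\in X^{m}$ and fix a countable base $\mathcal B$ of $X$. For $U_{1},\dotsc,U_{m}\in\mathcal B$ put
\[
A(U_{1},\dotsc,U_{m})=\{x\in X:\ (f^{n}x,f^{2n}x,\dotsc,f^{mn}x)\in U_{1}\times\dotsb\times U_{m}\ \text{for some }n\ge 1\}.
\]
Each such set is open by continuity of $f$, and the intersection $Y:=\bigcap A(U_{1},\dotsc,U_{m})$ over all finite tuples drawn from $\mathcal B$ is exactly the set of $x\in X$ for which $\{(f^{n}x,f^{2n}x,\dotsc,f^{mn}x):n\ge 0\}$ is dense in $X^{m}$. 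By the Baire category theorem the theorem is therefore equivalent to the assertion that every $A(U_{1},\dotsc,U_{m})$ is dense, i.e.\ to the statement that for all non-empty open $W,U_{1},\dotsc,U_{m}\subseteq X$ there is $n\ge 1$ with
\begin{equation}\label{eq:proposal-claim}
W\cap f^{-n}U_{1}\cap f^{-2n}U_{2}\cap\dotsb\cap f^{-mn}U_{m}\neq\emptyset .
\end{equation}

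Proving~\eqref{eq:proposal-claim} is the real work, and here both weak mixing and minimality enter essentially. Each iterate $(X,f^{i})$ is again a minimal weakly mixing system (minimality forbids a non-trivial cyclic factor, and weak mixing of $f$ passes to $f^{i}$), so each hitting set $N_{f^{i}}(U,V)$ is thick; but a finite intersection of thick sets coming from the distinct powers $f,f^{2},\dotsc,f^{m}$ may well be empty, so one cannot simply intersect hitting times, and~\eqref{eq:proposal-claim} is not a formal consequence of weak mixing of the iterates. I would establish~\eqref{eq:proposal-claim} by Glasner's method of topological ergodic decomposition: analyse the $g$-orbit of the diagonal $\Delta(X)\subseteq X^{m}$ through the enveloping semigroup of $(X,f)$, describing the relevant orbit closures as sets of tuples $(\tau_{1}x,\dotsc,\tau_{m}x)$ with the $\tau_{i}$ ranging over that semigroup subject to commutation and idempotent constraints; the decisive structural fact is that these constraints are carried by the maximal \emph{distal} factor of $(X,f)$, equivalently that the scheme $f\times f^{2}\times\dotsb\times f^{m}$ admits a distal topological characteristic factor. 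Since a weakly mixing system has trivial maximal distal factor, the constraints dissolve, the $g$-orbit of $\Delta(W)$ is dense in $X^{m}$ for every non-empty open $W$, and~\eqref{eq:proposal-claim} follows.

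The main obstacle is precisely this structural step: the enveloping-semigroup analysis of the diagonal under $f\times f^{2}\times\dotsb\times f^{m}$ and the identification of its obstruction data with a distal factor of $(X,f)$. This is the difficulty resolved in~\cite{G94} (see also~\cite{E-Glasner-2000}); related results on the iterates of a weakly mixing minimal system --- such as Kwietniak and Oprocha's on weak disjointness of all iterates --- proceed by similar structure-theoretic means, so the core difficulty is of the same kind there. For comparison, the Furstenberg--Weiss multiple recurrence theorem recalled above yields only that residually many diagonal points are \emph{recurrent} for $g$; the upgrade from recurrence to a dense orbit is exactly what weak mixing contributes. Everything else --- the Baire packaging and the reduction to~\eqref{eq:proposal-claim} --- is routine.
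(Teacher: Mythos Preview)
The paper does not give its own proof of this theorem: it is quoted in the introduction as a result of Glasner \cite{G94,E-Glasner-2000}, and the paper explicitly remarks that Moothathu \cite{Moothathu-2010} later gave a simplified proof ``without resorting to the heavy machinery of the structure theory of minimal systems''. So there is nothing in the paper to compare your argument against at the level of the hard step.

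That said, your outline is sound. The Baire-category reduction to \eqref{eq:proposal-claim} is exactly the equivalence $(1)\Leftrightarrow(2)$ of Proposition~\ref{prop:triangle-trans-equ-condi} in this paper (with $\mathbf{a}=(1,2,\dotsc,m)$), so that layer is genuinely routine and matches the paper's own packaging of $\Delta$-transitivity. Your identification of the remaining difficulty --- that the thick hitting-time sets for the separate iterates $f,f^{2},\dotsc,f^{m}$ cannot simply be intersected --- is correct, and your proposed resolution via the enveloping semigroup and the distal topological characteristic factor is precisely Glasner's original route. One small inaccuracy: Kwietniak and Oprocha \cite{D-Kwietniak-P-Oprocha-2010} do not in fact use structure theory for their weak-disjointness results; their methods are combinatorial, closer in spirit to Moothathu's elementary argument. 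If you want a self-contained proof here, Moothathu's approach would be the natural one to write out rather than the structure-theoretic sketch.
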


In 2010, Moothathu~\cite{Moothathu-2010} provided a simplified proof of Theorem~\ref{thm:G2000}
without resorting to the heavy machinery of the structure theory of minimal systems.
He also asked whether $(X,f)$ should possess the following two properties:
\begin{enumerate}
  \item For each $m\in\mathbb{N}$, $(X^m,f\times f^2\times \dotsb\times f^m)$ is transitive.
  \item For each $m\in\mathbb{N}$, there exists a residual subset $Y$ of $X$ such that
for every point $x\in Y$  the diagonal $m$-tuple $(x,x,\dotsc, x)$ has a dense orbit in $X^m$
under the action of $f\times f^2\times\dotsb\times f^m$.
\end{enumerate}
Following~\cite{Moothathu-2010}, we will say that $(X,f)$ is \emph{multi-transitive} if it satisfies (1) and
that $(X,f)$ is \emph{$\Delta$-transitive} if it satisfies (2).
It is shown in~\cite{Moothathu-2010} that weak mixing, multi-transitivity and $\Delta$-transitivity are equivalent
for minimal  systems.

Moothathu asked whether there are implications between multi-transitivity and weak mixing for general (non-minimal) systems.
In 2012, Kwietniak and Oprocha~\cite{D-Kwietniak-P-Oprocha-2010} showed that in general there
is no connection between weak mixing and multi-transitivity by constructing examples of
weakly mixing but non-multi-transitive and multi-transitive but non-weakly mixing systems.
They proposed the following natural problem.

\begin{ques}
Is there any non-trivial characterization of multi-transitive weakly mixing systems?
\end{ques}

In a recent paper~\cite{Chzhj-Ljian-Lj},
we show that multi-transitivity can be characterized by $\mathcal{F}$-transitivity.
In section 4, we will show that multi-transitivity can be also characterized by $\mathcal{F}$-point transitivity.

It is shown in~\cite{Moothathu-2010} that $\Delta$-transitivity implies weak mixing,
but there exists a strongly mixing system which is not $\Delta$-transitive.
Another natural problem is the following:
\begin{ques}
Is there any non-trivial characterization of $\Delta$-transitive systems?
\end{ques}
In section 5, we will show that $\Delta$-transitive systems can be characterized
$\mathcal{F}$-point transitivity, while they can not be
characterized by $\mathcal{F}$-transitivity.

A dynamical system is called \emph{multi-minimal} if  for every $n\in\mathbb{N}$,
the system $(X^{n}, f\times f^{2}\times \dotsb\times f^{n})$ is minimal.
In~\cite{Moothathu-2010}, Moothathu was not aware that there have been some study in this topic.
But the terminology is slightly different to multi-minimality.
See a brief introduction in~\cite{D-Kwietniak-P-Oprocha-2010}.
Kwietniak and Oprocha~\cite{D-Kwietniak-P-Oprocha-2010} also
remarked that although every weakly mixing minimal system is multi-transitive,
it is not necessarily  multi-minimal.
The discrete horocycle flow $h$ is an example of a weakly mixing minimal homeomorphism
but not multi-minimal (see~\cite[pp.26,~105--110]{E-Glasner-2003}).
By this observation, Kwietniak and Oprocha proposed the following problem in~\cite{D-Kwietniak-P-Oprocha-2010}.

\begin{ques}
Is there any non-trivial characterization of multi-minimality in terms of some
dynamical properties?
\end{ques}
In section 6, we answer this question by showing that multi-minimal systems can be also characterized
by $\mathcal{F}$-point transitivity, while they can not be characterized by $\mathcal{F}$-transitivity.

\section{Preliminaries}
In this paper, the sets of integers, non-negative integers and positive integers
are denoted by $\mathbb{Z}$, $\mathbb{Z}_+$ and $\mathbb{N}$ respectively.
For $r\in\mathbb{N}$, denote $\mathbb{N}^r=\mathbb{N}\times \mathbb{N}\times \dotsb\times \mathbb{N}$ ($r$-copies) and
$\mathbb{N}^r_*=\{(n_1,n_2,\dotsc,n_r)\in\mathbb{N}^r:\ n_1<n_2<\dotsb<n_r\}$.
For $M$, $N\in\mathbb{N}$ with $M\leq N$, denote by $[M, N]=\{M,M+1,\ldots,N\}$ and $[M,+\infty)=\{M,M+1,\ldots\}$.

\subsection{Topological dynamics}
Let $(X, f)$ be a dynamical system.
For two subsets $U$, $V$ of $X$, we define the \emph{hitting time set of $U$ and $V$} by
\[N(U,V)=\{n\in\mathbb{N}:\ f^n(U)\cap V\neq\emptyset\}=\{n\in\mathbb{N}:\ U\cap f^{-n}(V)\neq\emptyset\}.\]
For a point $x\in X$ and a subset $U$ of $X$,
we define the \emph{entering time set of $x$ into $U$} by
\[N(x,U)=\{n\in\mathbb{N}:\ f^n(x)\in U\}.\]
When there is more than one action on the underlying space,
we will use the notations $N_f(U,V)$ and $N_f(x,U)$ to avoid ambiguity.

The system $(X,f)$ is called \emph{minimal} if it has no proper closed invariant subsets,
that is, if $K\subset X$ is non-empty, closed and $f(K)\subset K$, then $K=X$.
A point $x\in X$ is called \emph{minimal} if it is contained in some
minimal subsystem of $(X,f)$.

We say that $(X,f)$ is \emph{(topologically) transitive} if for every
two non-empty open subsets $U$ and $V$ of $X$, the hitting time set $N(U,V)$ is non-empty;
\emph{totally transitive} if $(X,f^n)$ is transitive for any $n\in\mathbb{N}$;
\emph{weakly mixing} if the product system $(X\times X,f\times f)$ is transitive;
\emph{strongly mixing} if for every
two non-empty open subsets $U$ and $V$ of $X$, the hitting time set $N(U,V)$ is cofinite,
that is, there exists $N\in\mathbb{N}$ such that $[N,+\infty)\subset N(U,V)$.

For a point $x\in X$, denote the \emph{orbit} of $x$ by  $Orb(x,f)=\{x,f(x),f^2(x),\dotsc,f^n(x),\dotsc\}$.
Let $\omega(x,f)$ be the \emph{$\omega$-limit set} of $x$, i.e., $\omega(x,f)$ is the limit set of $Orb(x,f)$.
A point $x\in X$ is called a \emph{recurrent point} if $x\in\omega(x,f)$, and a \emph{transitive point}
if $\omega(x,f)=X$. For a transitive system $(X,f)$, a point $x\in X$ is a transitive point
if and only if the orbit of $x$ is dense in $X$.
It is not hard to verified that a system $(X,f)$ is transitive if and only if
the collection of all transitive points, denoted by $Trans(X,f)$, is a dense $G_\delta$ subset of $X$ and
$(X,f)$ is minimal if and only if $Trans(X,f)=X$.

A dynamical system $(X, f)$ is an \emph{E-system} if it is transitive and there is an invariant Borel
probability measure $\mu$ with full support, i.e.,
$supp(\mu)=\{x\in X:$  for every open neighborhood  $U$ of $x$, $\mu(U)>0\}=X$;
an \emph{M-system} if it  is transitive and has dense minimal points.

Let $(X, f)$ be a dynamical system. We say that $(X, f)$ has
\emph{dense small periodic sets} if for every non-empty open subset $U$ of $X$
there exists a closed subset $Y$ of $U$ and $k \in\mathbb{N}$
such that $Y$ is invariant for $f^ k$ (i.e., $f^k(Y)\subset Y$).
Clearly, if $(X,f)$ has dense periodic points,
then it  has dense small periodic sets.
We say that $(X,f)$ is an \emph{HY-system}
if it is totally transitive and has dense small periodic sets \cite{L2011}.

Let $\Lambda = \{0, 1\}$ be equipped with the discrete topology.
Let $\Sigma =\Lambda^{\mathbb{Z}_+}$ denote the set of all infinite sequence of symbols in $\Lambda$
indexed by the non-negative integers $\mathbb{Z}_+$ with the product topology.
The \emph{shift} transformation is a continuous map $\sigma:\Sigma\to\Sigma$ given by
$(\sigma(x))_i=x_{i+1}$, where $x=x_0x_1x_2\dotsb$.
A \emph{subshift} is any non-empty closed subset $X$ of $\Sigma$ such that $\sigma(X)\subset X$.

A \emph{word} of length $k$ is a finite sequence $\omega=\omega_1\omega_2\dotsb\omega_k$
of elements of $\{0,1\}$.
The length of a word $\omega$ is denoted as $|\omega|$.
We say that a word $\omega=\omega_1\omega_2\dotsb\omega_k$
appears in $x=(x_i)\in\Sigma$ at position $t$
if $x_{t+j-1}=\omega_j$ for $j=1,2,\dotsc,k$.
A cylinder in a subshift $X\subset \Sigma$ is any set $[u]=\{x\in X: x_0x_1\dotsc x_{n-1}=u\}$, where $u$ is a word with length $n$.
The family of cylinders in a subshift $X\subset \Sigma$ is a base of the topology of $X$ inherited from $\Sigma$.

For any $P\subset\mathbb{N}$ we define
$$\Sigma_P=\{x\in\Sigma:x_i=x_j=1\Rightarrow |i-j|\in P\cup\{0\}\}.$$
It is easily verified that $\Sigma_P$ is a subshift. We will call a subshifts defined in this
way the \emph{spacing subshifts} (see \cite{K-Lau-A-Zame-1973} and \cite{Banks-Nguyen-Oprocha-etc-2010}).
For a cylinder $[w]$  in $\Sigma$, let $[w]_P=[w]\cap \Sigma_P$.

\subsection{Furstenberg families}
Let $\mathcal{P}$ denote the collection of all subsets of $\mathbb{N}$.
A subset $\mathcal F$ of $\mathcal P$ is called a \emph{Furstenberg family} (or just a \emph{family}),
if it is hereditary upward, i.e.,
\[\text{$F_1 \subset F_2$ and $F_1\in\mathcal F$ imply
$F_2\in\mathcal{F}$.} \]
 A family $\mathcal{F}$ is called \emph{proper} if it is a non-empty
proper subset of $\mathcal P$,
i.e., neither empty nor all of $\mathcal P$.
Any non-empty collection $\mathcal A$ of subsets of $\mathbb{N}$ naturally generates a
family
\[[\mathcal A] = \{F\subset\mathbb{N}: A \subset F \text{ for some } A\in\mathcal{A}\}.\]
For a family $\mathcal F$, the {\em dual family} of $\mathcal F$, denoted by $\kappa\mathcal F$, is
\[\{F\in \mathcal P: F\cap F'\neq \emptyset \text{ for every }  F'\in \mathcal F\}.\]
Sometimes the dual family $\kappa\mathcal F$ is also denoted by $\mathcal F^*$.
Let $\mathcal F_{inf}$ be the family of all infinite subsets of $\mathbb{N}$.
It is easy to see that its dual family $\kappa\mathcal F_{inf}$
is the family of all cofinite subsets of $\mathbb{N}$, denoted by $\mathcal F_{cf}$.

A subset $F$ of $\mathbb{N}$ is called \emph{thick}
if it contains arbitrarily long runs of positive integers,
i.e., for every  $n\in\mathbb{N}$  there exists some  $a_n\in\mathbb{N}$
 such that $[a_n,a_n+n]\subset F$,
and \emph{syndetic} if there is $N\in\mathbb{N}$ such that $[n,n+N]\cap F\neq \emptyset$ for every $n\in\mathbb{N}$.
The families of all thick sets and syndetic sets are denoted by $\mathcal F_{t}$ and $\mathcal F_{s}$ respectively.
It is easy to see that $\kappa \mathcal F_s=\mathcal F_t$.

For $n\in\mathbb{Z}$ and $F\subset \mathbb{N}$, put
\[F+n=\{k+n:\ k\in F\}\cap \mathbb{N}.\]
A family $\mathcal {F}$ is called \emph{translation $\pm$ invariant} if for every $n\in \mathbb{Z}_+$
and every $F\in\mathcal {F}$, we have $F\pm n\in \mathcal {F}$.
It is easy to see that $\mathcal {F}_{inf}$,
$\mathcal{F}_{cf}$, $\mathcal{F}_t$ and $\mathcal{F}_s$
are translation $\pm$  invariant.

For a subset $F\subset\mathbb{N}$, the \emph{difference set} of $F$ is $F-F=\{a-b:\ a,b\in F\text{ and }a>b\}$.
For a family $\mathcal{F}$, the \emph{difference family} of $\mathcal{F}$ is
$\Delta(\mathcal{F})=[\{F-F:\ F\in\mathcal{F}\}]$.

\subsection{Topological dynamics via Furstenberg families}
The idea of using Furstenberg families to
describe dynamical properties goes back at least to Gottschalk and Hedlund~\cite{GH55}.
It was developed further by Furstenberg~\cite{F81}.
For a systematic study and recent results, see~\cite{A97}, \cite{G04}, \cite{HY04} and~\cite{W-Huang-X-Ye-2005}.

Let $(X,f)$ be a dynamical system and $\mathcal{F}$ be a family.
A point $x\in X$ is called \emph{$\mathcal{F}$-recurrent}
if for every neighborhood $U$ of $x$ the entering time set $N(x, U)$ is in $\mathcal{F}$.
In~\cite{GH55}, Gottschalk and Hedlund characterized the entering time sets of minimal points.

\begin{lem}[\cite{GH55}] Let $(X, f)$ be a dynamical system and $x\in X$.
Then $x$ is a minimal point if and only if it is $\mathcal F_{s}$-recurrent.
\end{lem}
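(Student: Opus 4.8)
The plan is to establish the two implications of the equivalence separately, the second being the substantial one.

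\emph{Minimal $\Rightarrow$ $\mathcal F_s$-recurrent.} Suppose $x$ lies in a minimal subsystem $M$. Since every orbit in a minimal system is dense, $M=\overline{Orb(x,f)}$. Fix a neighborhood $U$ of $x$. Then $U\cap M$ is a non-empty open subset of $M$, so by minimality every point of $M$ enters $U\cap M$ under some iterate; equivalently, $\{f^{-n}(U):n\geq 0\}$ is an open cover of the compact set $M$. Passing to a finite subcover, there is $N\in\mathbb N$ with $M\subseteq\bigcup_{n=0}^{N}f^{-n}(U)$. Given any $m\in\mathbb N$, the point $f^m(x)\in M$ lies in $f^{-n}(U)$ for some $0\leq n\leq N$, so $m+n\in[m,m+N]\cap N(x,U)$. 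Hence $N(x,U)$ is syndetic, and as $U$ was arbitrary, $x$ is $\mathcal F_s$-recurrent.

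\emph{$\mathcal F_s$-recurrent $\Rightarrow$ minimal.} Assume $x$ is $\mathcal F_s$-recurrent and set $Y=\overline{Orb(x,f)}$. By Zorn's lemma $Y$ contains a minimal subset $M'$; it suffices to show $x\in M'$, for then $x$ is a minimal point. Suppose not. Since $M'$ is closed and $x\notin M'$, choose a neighborhood $U$ of $x$ with $\overline{U}\cap M'=\emptyset$; the compact sets $\overline U$ and $M'$ then lie at a positive distance $\delta>0$. As $M'\subseteq Y$, pick $y\in M'$ and integers $m_k\to\infty$ with $f^{m_k}(x)\to y$. Fix $\ell\in\mathbb N$. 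By uniform continuity of $f,f^2,\dots,f^\ell$ there is $\varepsilon>0$ such that $d(a,b)<\varepsilon$ implies $d(f^j(a),f^j(b))<\delta$ for all $0\leq j\leq\ell$. For all large $k$ we have $d(f^{m_k}(x),y)<\varepsilon$, hence $d\bigl(f^{m_k+j}(x),f^j(y)\bigr)<\delta$ for $0\leq j\leq\ell$; since $M'$ is invariant, $f^j(y)\in M'$, so $f^{m_k+j}(x)\notin \overline U\supseteq U$. Thus $N(x,U)$ avoids the whole block $\{m_k,m_k+1,\dots,m_k+\ell\}$, and since $\ell$ is arbitrary $N(x,U)$ is not syndetic, contradicting $\mathcal F_s$-recurrence. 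Therefore $x\in M'$ and $x$ is a minimal point.

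The routine ingredients are the compactness-covering step and the uniform-continuity estimate. The point requiring care is the order of quantifiers in the second implication: one must fix the block length $\ell$ \emph{before} invoking uniform continuity of the first $\ell$ iterates of $f$, and only afterwards pass to large $k$; and the reason the orbit of $x$ stays away from $U$ along an entire block of length $\ell$ is precisely that $M'$ is $f$-invariant, so the forward iterates of $y$ never leave $M'$ and hence never approach $U$.
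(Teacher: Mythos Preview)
Your proof is correct and follows the classical argument. The paper does not actually supply a proof of this lemma; it is quoted as a foundational result from Gottschalk--Hedlund, so there is nothing in the paper to compare your argument against.

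One small point worth making explicit: when you write ``pick $y\in M'$ and integers $m_k\to\infty$ with $f^{m_k}(x)\to y$'', you are implicitly using that $M'\subseteq\omega(x,f)$. This holds because $x$, being $\mathcal F_s$-recurrent, is in particular recurrent, so $x\in\omega(x,f)$; since $\omega(x,f)$ is closed and $f$-invariant, $Y=\overline{Orb(x,f)}\subseteq\omega(x,f)$, whence $Y=\omega(x,f)$ and hence $M'\subseteq\omega(x,f)$. Without this observation, $y\in Y$ only guarantees that $y$ lies in the closure of the orbit, not that it is approached along a sequence of times tending to infinity. Otherwise the argument is complete.
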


Recall that a dynamical system $(X,f)$ is called {\em $\mathcal F$-transitive}
if for every two non-empty open subsets $U,V$ of $X$ the hitting time set $N(U,V)$ is in $\mathcal F$;
\emph{$\mathcal{F}$-central} if for every non-empty open subset $U$ of $X$ the hitting time set
$N(U,U)$ is in $\mathcal{F}$;
{\em $\mathcal F$-mixing} if $(X\times X, f\times f)$ is $\mathcal F$-transitive.

\begin{lem}[\cite{Furstenberg-1967,A97}] \label{lem:weak-mixing}
Let $(X, f)$ be a dynamical system and $\mathcal F$ be a family. Then
\begin{enumerate}
\item $(X, f)$ is weakly mixing if and only if it is $\mathcal F_{t}$-transitive.
\item $(X, f)$ is strongly mixing if and only if it is $\mathcal F_{cf}$-transitive.
\item $(X,f)$ is $\mathcal F$-mixing if and only if it is $\mathcal F$-transitive and weakly mixing.
\end{enumerate}
\end{lem}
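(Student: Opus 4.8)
Statement (2) I would dispatch in one line: by definition $(X,f)$ is strongly mixing precisely when $N(U,V)$ is cofinite for every pair of non-empty open sets $U,V$, which is exactly $\mathcal F_{cf}$-transitivity. The content is in (1) and (3), and my plan is to run both through the classical observation (see~\cite{Furstenberg-1967,A97}) that if $(X,f)$ is weakly mixing then every finite power $(X^m,f\times\dots\times f)$ ($m$ factors) is transitive; I would simply cite this.

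For the forward implication of (1) the argument is quick: assuming weak mixing, fix non-empty open $U,V$ and $m\in\mathbb N$. Since a transitive map on a compact space is onto, each $f^{-j}(V)$ with $0\le j\le m$ is non-empty open, so transitivity of $(X^{m+1},f\times\dots\times f)$ applied to the tuples $(U,\dots,U)$ and $(V,f^{-1}(V),\dots,f^{-m}(V))$ produces an $a$ with $f^{a}(U)\cap f^{-j}(V)\neq\emptyset$ for all $j$, i.e. $\{a,a+1,\dots,a+m\}\subseteq N(U,V)$; letting $m$ vary shows $N(U,V)$ is thick. For the converse --- the part I expect to be the real work --- I would assume $\mathcal F_t$-transitivity (in particular transitivity), fix non-empty open $U_1,U_2,V_1,V_2$, and use transitivity to pick $a,b$ with $W_1:=U_1\cap f^{-a}(U_2)$ and $W_2:=V_1\cap f^{-b}(V_2)$ non-empty. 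Then $N(W_1,W_2)$ is thick, so it contains a block $[c,c+L]$ with $L>|a-b|$ and (choosing it far out) $c\ge a$. Unwinding a witness $x\in W_1$ with $f^{c+j}(x)\in W_2$ would show $c+j\in N(U_1,V_1)$ (as $x\in U_1$, $f^{c+j}(x)\in V_1$) and $c+j+(b-a)\in N(U_2,V_2)$ (as $f^{a}(x)\in U_2$, $f^{(c+j)+b}(x)\in V_2$), for every $j\in[0,L]$. Hence $[c,c+L]\subseteq N(U_1,V_1)$ while its translate $[c+b-a,\,c+L+b-a]\subseteq N(U_2,V_2)$; two blocks of common length $L>|a-b|$ overlap, so their common point lies in $N(U_1,V_1)\cap N(U_2,V_2)$, and $(X\times X,f\times f)$ is transitive.

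For (3) I would argue as follows, assuming (as is standard) that $\mathcal F$ is proper so that $\mathcal F$-transitivity forces transitivity. If $(X,f)$ is $\mathcal F$-mixing, then $N(U,V)=N_{f\times f}(U\times U,\,V\times V)\in\mathcal F$ gives $\mathcal F$-transitivity, and transitivity of $(X\times X,f\times f)$ is weak mixing. Conversely, given $\mathcal F$-transitivity and weak mixing, fix non-empty open $U_1,U_2,V_1,V_2$ and use weak mixing to choose a \emph{single} $k$ with $f^{k}(U_1)\cap U_2\neq\emptyset$ and $f^{k}(V_1)\cap V_2\neq\emptyset$; set $U':=U_1\cap f^{-k}(U_2)$ and $V':=V_1\cap f^{-k}(V_2)$. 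If $x\in U'$ with $f^n(x)\in V'$, then $n\in N(U_1,V_1)$, and since $f^k(x)\in U_2$ and $f^{n+k}(x)=f^n(f^k(x))\in V_2$ also $n\in N(U_2,V_2)$; thus $N(U',V')\subseteq N(U_1,V_1)\cap N(U_2,V_2)$. As $N(U',V')\in\mathcal F$ and $\mathcal F$ is hereditary upward, the intersection $N_{f\times f}(U_1\times U_2,\,V_1\times V_2)$ lies in $\mathcal F$, i.e. $(X,f)$ is $\mathcal F$-mixing.

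The hard part is the converse of (1). In (3), weak mixing hands me one common offset $k$ serving both coordinate pairs at once; in the thick-implies-weakly-mixing direction no such synchronization is free, and the crux is realizing that feeding the \emph{single} refined pair $(W_1,W_2)$ into the thickness hypothesis already manufactures long parallel blocks inside $N(U_1,V_1)$ and $N(U_2,V_2)$ whose offset is the fixed number $|a-b|$ --- so a block longer than $|a-b|$ must meet its own translate. Everything else is bookkeeping: keeping hitting times positive by taking the thick block far out, the remark that transitive maps are onto, and the properness assumption on $\mathcal F$ in (3).
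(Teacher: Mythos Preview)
The paper does not give a proof of this lemma; it is stated with a citation to \cite{Furstenberg-1967,A97} and used as a black box. Your argument is correct and is essentially the classical one found in those references: part (2) is a tautology, the forward direction of (1) follows from Furstenberg's result that weak mixing propagates to all finite powers, the converse of (1) is the standard ``long block versus fixed offset'' overlap trick, and (3) is the usual reduction via a single synchronizing time $k$ furnished by weak mixing together with upward heredity of $\mathcal F$.

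Two minor remarks. First, in the converse of (1) your parenthetical ``choosing it far out, $c\ge a$'' is not actually needed: even without it, the overlap $[c,c+L]\cap[c+b-a,c+L+b-a]$ is nonempty once $L>|a-b|$, and any $n$ in that overlap automatically satisfies $n\ge c\ge 1$, so the positivity of the relevant hitting times is free. Second, in (3) you implicitly assume $\mathcal F$ is proper so that $\mathcal F$-mixing forces transitivity of $f\times f$; this is the standing convention in the paper, so it is fine, but worth flagging since the lemma as stated says ``let $\mathcal F$ be a family'' without that qualifier.
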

The following lemma describes the relationship between $\mathcal{F}$-transitivity and $\mathcal{F}$-center.
\begin{lem}[\cite{A97}]\label{lem:F-trans-equ-F-center-trans}
Let $(X,f)$ be a dynamical system and let $\mathcal{F}\subset \mathcal {F}_{inf}$
be a proper translation $+$ invariant family.
Then the system $(X,f)$ is $\mathcal{F}$-transitive if and only if it is transitive and $\mathcal{F}$-central.
\end{lem}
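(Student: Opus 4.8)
The plan is to prove the two implications separately. The forward direction is essentially trivial: if $(X,f)$ is $\mathcal{F}$-transitive, then for any non-empty open $U,V$ we have $N(U,V)\in\mathcal{F}$, so in particular $N(U,V)\neq\emptyset$ (since $\mathcal{F}$ is proper and $\emptyset\notin\mathcal{F}$ — here I would note that properness together with hereditary upwardness forces $\emptyset\notin\mathcal{F}$, because otherwise $\mathcal{F}=\mathcal{P}$). Thus $(X,f)$ is transitive, and taking $V=U$ gives $N(U,U)\in\mathcal{F}$, so the system is $\mathcal{F}$-central.

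For the converse, the key observation is the standard "absorption" trick for transitive systems. Suppose $(X,f)$ is transitive and $\mathcal{F}$-central, and let $U,V$ be non-empty open. By transitivity pick $k\in\mathbb{N}$ with $W:=U\cap f^{-k}(V)$ non-empty and open. Then I claim $N(W,W)+k\subset N(U,V)$: if $n\in N(W,W)$, there is $x\in W$ with $f^n(x)\in W$, hence $x\in U$ and $f^{n+k}(x)=f^k(f^n(x))\in f^k(W)\subset f^k(f^{-k}(V))\subset V$, so $n+k\in N(U,V)$. (One should be slightly careful: if $n+k$ happens to force something about $\mathbb{N}$ vs $\mathbb{Z}_+$, but since $n,k\geq 1$ this is fine.) Since $(X,f)$ is $\mathcal{F}$-central, $N(W,W)\in\mathcal{F}$; since $\mathcal{F}$ is translation $+$ invariant, $N(W,W)+k\in\mathcal{F}$; and since $\mathcal{F}$ is hereditary upward and $N(W,W)+k\subset N(U,V)$, we conclude $N(U,V)\in\mathcal{F}$.

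The main point requiring a little care is the interaction between the shift by $k$ and the definition of $+n$ for families: the paper defines $F+n=\{k+n:k\in F\}\cap\mathbb{N}$, so I must make sure the intersection with $\mathbb{N}$ does not throw away elements I need — but since $F\subset\mathbb{N}$ and $n=k\geq 0$ we have $\{m+k:m\in F\}\subset\mathbb{N}$ already, so $N(W,W)+k=\{m+k:m\in N(W,W)\}$ and the argument goes through cleanly. I do not expect any serious obstacle here; the hypothesis "$\mathcal{F}\subset\mathcal{F}_{inf}$" is not even strictly needed for this argument, only properness (to get $\emptyset\notin\mathcal{F}$ for the forward direction) and translation $+$ invariance, but I will keep the stated hypotheses as given.
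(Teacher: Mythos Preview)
Your proof is correct and is exactly the standard argument. Note, however, that the paper does not actually supply a proof of this lemma: it is stated with a citation to Akin's book~\cite{A97} and used as a black box. So there is no ``paper's own proof'' to compare against here; what you have written is the expected justification, and it matches the argument one finds in the cited reference.
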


\begin{de}[\cite{L2011}]
Let $(X,f)$ be a dynamical system and $\mathcal F$ be a family.
A point $x \in X$ is called an \emph{$\mathcal{F}$-transitive point} if for every non-empty open subset $U$ of $X$,
the entering time set $N(x, U)$ is in $\mathcal{F}$.
Denote the set of all $\mathcal{F}$-transitive points by $Trans_{\mathcal F}(X, f)$.
We say that  the system $(X, f)$ is \emph{$\mathcal{F}$-point transitive}
if there exists some $\mathcal{F}$-transitive point in $X$.
\end{de}

The following lemma is easy to verified.
\begin{lem}\label{lem:minimal-Fs}
Let $(X,f)$ be a dynamical system. Then
\begin{enumerate}
\item $(X,f)$ is transitive if and only if it is $\mathcal{F}_{inf}$-point transitive.
\item $(X,f)$ is minimal if and only if it is $\mathcal{F}_s$-point transitive if and only if
$Trans_{\mathcal{F}_s}(X,f)=X$.
\end{enumerate}
\end{lem}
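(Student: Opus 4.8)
The plan is to reduce both parts to facts already recorded in Section~2, namely the description of transitivity through the residual set $Trans(X,f)$ of transitive points, and the Gottschalk--Hedlund characterization of minimal points as the $\mathcal{F}_s$-recurrent points. The starting point is the elementary observation that for a point $x$ and a non-empty open set $U$ one has $N(x,U)\in\mathcal{F}_{inf}$, i.e. $N(x,U)$ is infinite, exactly when $f^n(x)$ enters $U$ along a sequence of times tending to infinity; unwinding the definition of the $\omega$-limit set, this says precisely that $x$ is an $\mathcal{F}_{inf}$-transitive point if and only if $\omega(x,f)=X$, that is, if and only if $x$ is a transitive point in the sense of Section~2. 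Granting this, part~(1) is immediate: if $(X,f)$ is transitive then $Trans(X,f)$ is a non-empty (residual) set, hence contains an $\mathcal{F}_{inf}$-transitive point; conversely, if $x$ is an $\mathcal{F}_{inf}$-transitive point and $U,V$ are non-empty open sets, I would pick $m\in N(x,U)$ and, since $N(x,V)$ is infinite, some $k\in N(x,V)$ with $k>m$, so that $k-m\in N(U,V)$ and $(X,f)$ is transitive.

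For part~(2) I would close the cycle of implications ``minimal $\Rightarrow Trans_{\mathcal{F}_s}(X,f)=X \Rightarrow (X,f)$ is $\mathcal{F}_s$-point transitive $\Rightarrow$ minimal''. The first implication is the standard compactness argument: if $(X,f)$ is minimal and $U$ is non-empty open, then $\{f^{-i}(U):i\ge 0\}$ is an open cover of $X$ because every orbit is dense, so by compactness $X=\bigcup_{i=0}^{N}f^{-i}(U)$ for some $N$; hence for every $x\in X$ and every $m$ the interval $[m,m+N]$ meets $N(x,U)$, so $N(x,U)$ is syndetic and every point of $X$ is an $\mathcal{F}_s$-transitive point. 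The second implication is trivial since $X\neq\emptyset$. For the third, let $x$ be an $\mathcal{F}_s$-transitive point; then $N(x,U)$ is syndetic, in particular non-empty, for every non-empty open $U$, so the orbit of $x$ is dense and $\overline{Orb(x,f)}=X$; at the same time $N(x,U)$ is syndetic for every neighbourhood $U$ of $x$, so $x$ is $\mathcal{F}_s$-recurrent and therefore a minimal point by the Gottschalk--Hedlund lemma, which forces $\overline{Orb(x,f)}$ to be a minimal set. Putting the two together shows $X$ is minimal.

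All of this is routine bookkeeping, so I do not anticipate a genuine obstacle; the one point deserving a little attention is the identification of transitive points with points of dense orbit and with points of full $\omega$-limit set, which is used silently in part~(1) and again in part~(2). This equivalence is already stated in Section~2, and its only delicate case---a transitive system possessing an isolated point---reduces to noting that such a system is a single periodic orbit, for which every point trivially has full $\omega$-limit set. The remaining care needed is merely with the conventions $\mathbb{N}$ versus $\mathbb{Z}_+$ and neighbourhood versus open neighbourhood, neither of which affects the arguments.
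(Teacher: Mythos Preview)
Your argument is correct. The paper does not actually supply a proof of this lemma; it simply states beforehand that ``the following lemma is easy to verified'', so your write-up fills in precisely the routine details the authors chose to omit, using exactly the ingredients they make available in Section~2 (the description of $Trans(X,f)$ and the Gottschalk--Hedlund lemma).
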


We collect some results of $\mathcal{F}$-point transitive as following and
refer the reader to~\cite{L2011} for the definitions of Furstenberg families.

\begin{thm}[\cite{HKY07,W-Huang-X-Ye-2005}] \label{thm:E-M-system}
Let $(X,f)$ be a dynamical system. Then
\begin{enumerate}
\item $(X,f)$ is an E-system if and only if it is $\{\text{positive upper Banach density sets}\}$-point transitive.
\item $(X,f)$ is an M-system if and only if it is $\{\text{piecewise syndetic sets}\}$-point transitive.
\end{enumerate}
\end{thm}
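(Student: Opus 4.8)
The plan is to prove the two equivalences separately. In each case the forward implication produces a single $\mathcal F$-transitive point, while the converse rebuilds the defining feature of the class — a full-support invariant measure for an E-system, dense minimal points for an M-system — out of the entering time sets of such a point.

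\emph{Part (1).} For ``$\{\text{positive upper Banach density sets}\}$-point transitive $\Rightarrow$ E-system'': if $x$ is such a point then each $N(x,U)$ is infinite, so $Orb(x,f)$ is dense and $(X,f)$ is transitive. I would fix a countable base $\{U_k\}$ of $X$, chosen (as is possible, via small rational balls around a countable dense set) so that every non-empty open set contains $\overline{U_k}$ for some $k$; for each $k$ pick intervals $[a_i,b_i]$ with $b_i-a_i\to\infty$ along which the relative frequency of $N(x,U_k)$ stays $\ge c_k>0$. A weak-$*$ limit $\mu_k$ of the empirical measures $\frac1{b_i-a_i+1}\sum_{n=a_i}^{b_i}\delta_{f^n(x)}$ is $f$-invariant (the standard telescoping estimate) and, $U_k$ being open and $\overline{U_k}$ closed, satisfies $\mu_k(\overline{U_k})\ge c_k>0$. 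Then $\mu=\sum_k 2^{-k}\mu_k$ is an $f$-invariant probability measure with $supp(\mu)=X$, so $(X,f)$ is an E-system. For the converse I would start from a full-support $f$-invariant probability measure $\mu$ and use the classical fact that $\mu$ has a quasi-generic point, i.e.\ some $x$ and $N_i\to\infty$ with $\frac1{N_i}\sum_{n=0}^{N_i-1}\delta_{f^n(x)}\to\mu$; then for every non-empty open $U$ we get $\mu(U)\le\liminf_i\frac1{N_i}|N(x,U)\cap[0,N_i)|$ and $\mu(U)>0$, so $N(x,U)$ has positive upper Banach density and $x$ is the required point.

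\emph{Part (2).} For ``M-system $\Rightarrow$ $\{\text{piecewise syndetic sets}\}$-point transitive'': I claim \emph{any} transitive point $x$ works. Given a non-empty open $U$, pick a minimal point $p\in U$; by Lemma~\ref{lem:minimal-Fs}(2) applied inside the minimal subsystem of $p$, the set $N(p,U)$ is syndetic, with gaps $\le K$ say. For each $T$, continuity gives a neighbourhood $O_T$ of $p$ with $f^i(O_T)\subset U$ for all $i\in N(p,U)\cap[0,T]$; transitivity of $x$ gives $n_T\ge 1$ with $f^{n_T}(x)\in O_T$, and then $n_T+\bigl(N(p,U)\cap[0,T]\bigr)\subset N(x,U)$ is $K$-syndetic on $[n_T,n_T+T]$. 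Letting $T\to\infty$ with $K$ fixed shows $N(x,U)$ is piecewise syndetic. For the converse, let $x$ be a $\{\text{piecewise syndetic sets}\}$-transitive point; it is transitive as before. Fixing a non-empty open $U$ and open sets $\overline W\subset V\subset\overline V\subset U$, piecewise syndeticity of $N(x,W)$ yields $K$ and intervals $[a_j,b_j]$ with $b_j-a_j\to\infty$ on which $N(x,W)$ is $K$-syndetic; passing to a subsequence with $f^{a_j}(x)\to z$, a limiting argument on orbit segments shows that in every window of length $K$ some iterate of $z$ lands in $\overline W\subset V$, i.e.\ $N(z,V)$ is $K$-syndetic. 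Now I would invoke the Ellis semigroup $E(X,f)$: a minimal idempotent $u$ sends $z$ to a minimal point $uz$, and since $K$-syndeticity gives $\{f^n:n\ge1\}=\bigcup_{i=0}^{K-1}\{f^n:n+i\in N(z,V)\}$, the element $u$ lies in the closure of one of these $K$ sets (the case $u=\mathrm{id}$, i.e.\ $E(X,f)$ a minimal left ideal, being immediate since then $\overline{Orb(z,f)}$ is itself minimal and meets $\overline V$), whence $f^{i_0}(uz)\in\overline V\subset U$ for some $i_0<K$; as $f^{i_0}(uz)$ is again a minimal point, minimal points are dense and $(X,f)$ is an M-system. (Alternatively, from $N(U_1,U_2)\supset N(x,U_2)-n_0$ for suitable $n_0$ one sees that all hitting time sets are piecewise syndetic, and then applies the known hitting-time characterisation of M-systems.)

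\emph{The hard part.} The genuinely delicate step is the converse of Part~(2): converting the purely combinatorial piecewise-syndeticity of the return sets of a transitive point into an honest minimal point inside every open set. The route above distils the ``syndetic core'' of a return set into a point $z$ with $N(z,V)$ syndetic and then uses a minimal idempotent to push $z$ back into $\overline V$ within a bounded number of steps; verifying that the uniform bound $K$ survives all the subsequence extractions — and, in Part~(1), that quasi-genericity and the weak-$*$ limits interact correctly with open versus closed sets — is where the care goes.
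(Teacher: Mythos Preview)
The paper does not supply a proof of this theorem; it is quoted from \cite{HKY07} and \cite{W-Huang-X-Ye-2005} as background. So there is no ``paper's proof'' to compare against, and your attempt must be judged on its own merits.

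Your argument for Part~(2) in both directions is sound; the Ellis-semigroup step works because a finite union of closed sets is closed, so a minimal idempotent $u\in E(X,f)=\overline{\{f^n:n\ge1\}}$ must lie in the closure of one of the $K$ pieces, and then $f^{i_0}(uz)\in\overline V\subset U$ as you say. The direction ``positive upper Banach density point transitive $\Rightarrow$ E-system'' in Part~(1) is also correct.

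The forward direction of Part~(1), however, rests on a false premise. You invoke ``the classical fact that $\mu$ has a quasi-generic point'', but this fails already for the identity map on a two-point space: the measure $\tfrac12(\delta_p+\delta_q)$ is invariant, yet every empirical average is a Dirac mass. Even restricting to transitive systems, the existence of a quasi-generic point for an \emph{arbitrary} (non-ergodic) invariant measure is not a standard result, and there is no reason to expect it here. The repair is to bypass $\mu$ itself and work one open set at a time through the ergodic decomposition: given a non-empty open $U$, choose an \emph{ergodic} $\nu$ with $\nu(U)>0$ (such $\nu$ exists since $\mu(U)>0$) and a $\nu$-generic point $y$, so that $N(y,U)$ has density $\nu(U)$. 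Now fix once and for all a transitive point $x$ of $(X,f)$; for each $M$ pick, by continuity, a neighbourhood $V_M$ of $y$ with $f^n(V_M)\subset U$ for all $n\in N(y,U)\cap[0,M]$, and then $m$ with $f^m(x)\in V_M$. This gives $|N(x,U)\cap[m,m+M]|\ge |N(y,U)\cap[0,M]|$, and letting $M\to\infty$ shows that the upper Banach density of $N(x,U)$ is at least $\nu(U)>0$. The single transitive point $x$ thus serves for every $U$. Note that this is precisely the ``transfer to a transitive point via continuity'' manoeuvre you already deployed in Part~(2), so the two halves of the theorem are more parallel than your write-up suggests --- and the genuinely delicate step is not where you placed it.
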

\begin{thm}[\cite{L2011}] \label{thm:WME-M-system}
Let $(X,f)$ be a dynamical system. Then
\begin{enumerate}
\item $(X,f)$ is a weakly mixing E-system if and only if it is  $\{\text{D-sets}\}$-point transitive.
\item $(X,f)$ is a weakly mixing M-system if and only if it is $\{\text{central sets}\}$-point transitive.
\item $(X,f)$ is an HY-system if and only if it is $\{\text{weakly thick sets}\}$-point transitive.
\end{enumerate}
\end{thm}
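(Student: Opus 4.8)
The plan is to treat the three equivalences in parallel, since each has the same shape. The implication ``structural property $\Rightarrow$ $\mathcal F$-point transitive'' will be obtained by using the structure at hand --- a fully supported invariant measure in~(1), a dense set of minimal points in~(2), total transitivity together with dense small periodic sets in~(3) --- to build a \emph{single} transitive point $x$ whose return-time sets $N(x,U)$ all lie in the family in question. The converse implication will be obtained by splitting $\mathcal F$-point transitivity into a transitivity of the appropriate strength, read off from a coarser and more familiar family --- central sets are piecewise syndetic, so $\{$central$\}$-point transitivity already forces an M-system via Theorem~\ref{thm:E-M-system}(2), while D-sets have positive upper Banach density, so $\{$D-sets$\}$-point transitivity forces an E-system via Theorem~\ref{thm:E-M-system}(1), and similarly in~(3) --- together with a ``mixing/small-periodic'' half that must be extracted from the combinatorial richness of $\mathcal F$. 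I describe~(2) in detail, as it is the template.

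For the forward direction of~(2), recall Furstenberg's characterisation that $C\subseteq\mathbb N$ is central precisely when $C=N_g(y,W)$ for some system $(Y,g)$, some $y\in Y$, some minimal point $z\in Y$ proximal to $y$, and some neighbourhood $W$ of $z$; recall also that the central sets form a translation-invariant, upward closed family. Fix a countable base $\{U_i\}_{i\in\mathbb N}$. Since an M-system has dense minimal points, for each $i$ choose a minimal point $z_i\in U_i$ and put $M_i=\overline{Orb(z_i,f)}$, a minimal subsystem meeting $U_i$. As $(X,f)$ is weakly mixing it is $\mathcal F_t$-transitive (Lemma~\ref{lem:weak-mixing}(1)), and as $(M_i,f)$ is minimal every point of $M_i$ is $\mathcal F_s$-transitive (Lemma~\ref{lem:minimal-Fs}(2)); since a thick set meets every syndetic set, $(X\times M_i,f\times f)$ is transitive, so that
\[D_i=\{x\in X:\ \{w\in M_i:(x,w)\in Trans(X\times M_i,f\times f)\}\text{ is residual in }M_i\}\]
is residual in $X$ by the Kuratowski--Ulam theorem. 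For $x\in D_i$ pick a transitive point $(x,w)$ of $X\times M_i$; its orbit closure is all of $X\times M_i$ and hence contains the diagonal of $M_i$, which forces $x$ and $w$ to be proximal, while minimality of $M_i$ yields $k$ with $f^kw\in U_i$. Thus $f^kx$ is proximal to the minimal point $f^kw\in U_i$, so $N(f^kx,U_i)$ is central; since $N(x,U_i)\supseteq N(f^kx,U_i)+k$ and the central family is translation-invariant and upward closed, $N(x,U_i)$ is central too. Now any $x$ in the residual, hence non-empty, set $\bigcap_iD_i$ has $N(x,U_i)$ central for every $i$, and upward closedness promotes this to $N(x,U)$ central for every non-empty open $U$; such an $x$ is a $\{$central$\}$-transitive point.

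For the converse of~(2), let $x$ be a $\{$central$\}$-transitive point. Central sets are piecewise syndetic, so $x$ is a $\{$piecewise syndetic$\}$-transitive point and Theorem~\ref{thm:E-M-system}(2) makes $(X,f)$ a (necessarily transitive) M-system; it remains to show $(X,f)$ is weakly mixing. By Lemma~\ref{lem:weak-mixing}(1) together with Lemma~\ref{lem:F-trans-equ-F-center-trans} applied to $\mathcal F=\mathcal F_t$, this reduces to proving that $N(U,U)$ is thick for every non-empty open $U$, and this is the step I expect to be the main obstacle. One knows that each $N(x,U)$ lies in \emph{some} minimal idempotent $p$ with $p+p=p$ and is therefore combinatorially large --- it contains an IP-set, it is syndetically structured, and this structure passes to $N(U,U)\supseteq N(x,U)-N(x,U)$ and to the translates $N(f^ax,U)$ with $a\in N(x,U)$ --- and the idempotent identity is the device that should let one thread a common run of consecutive integers through these sets; but since different open sets lie in different minimal idempotents, the naive ``single common ultrafilter'' argument is unavailable (the intersection of the closures of all non-empty open sets is empty), so the real work is the bookkeeping that turns ``every $N(x,U)$ is central'' into ``every $N(U,U)$ is thick'', equivalently that rules out a residual obstruction to weak mixing coexisting with a $\{$central$\}$-transitive point. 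Cases~(1) and~(3) run along the same lines: in~(1) the minimal point proximal to $x$ is replaced by the genericity data of a fully supported invariant measure and ``minimal idempotent'' by ``essential idempotent'', while in~(3) the dense small periodic sets supply, inside each basic open set, a closed set invariant under some power $f^k$ --- hence arithmetic-progression structure in the return times --- which together with total transitivity produces the family of weakly thick sets.
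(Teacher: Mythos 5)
A preliminary remark: the paper does not actually prove Theorem~\ref{thm:WME-M-system} --- it is imported verbatim from \cite{L2011} and used as a black box --- so there is no in-paper argument to compare yours with, and your proposal has to stand on its own. On that basis, the forward half of item~(2) as you present it is essentially correct and is the standard argument: weak mixing gives $\mathcal F_t$-transitivity, minimality of each $M_i$ gives syndetic return times, ``thick meets syndetic'' makes $X\times M_i$ transitive, Kuratowski--Ulam produces a residual set of points $x$ admitting, for each basic open $U_i$, a minimal point proximal to a forward iterate of $x$ and lying in $U_i$, and Furstenberg's dynamical characterization of central sets finishes. (You do use, without justification, that the family of central sets is closed under $F\mapsto F+k$; this is true, but it is exactly the kind of lemma that must be quoted or proved, and the analogous translation-invariance facts are needed again for D-sets and weakly thick sets in~(1) and~(3).)

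The converse is where the genuine gap lies, and you flag it yourself. The reduction to ``$N(U,U)$ is thick for every non-empty open $U$'' is legitimate (Lemmas~\ref{lem:weak-mixing} and~\ref{lem:F-trans-equ-F-center-trans}), but the step from ``$N(x,U)$ is central'' to ``$N(U,U)=N(x,U)-N(x,U)$ is thick'' cannot be carried out one open set at a time: a central set can have a non-thick difference set. Indeed $2\mathbb N$ is central --- it equals $N(0,\{0\})$ for the point $0$ of the rotation on $\mathbb Z/2$, which is minimal and proximal to itself --- yet $2\mathbb N-2\mathbb N=2\mathbb N$ is not thick. So any correct proof must use the centrality of $N(x,W)$ for \emph{auxiliary} open sets $W$ simultaneously; for instance, the IP-structure of the central set $N(x,U\cap f^{-m}(V))$ supplies $a,b$ with $a$, $b$ and $a+b$ all in it, whence $b\in N(U,U)\cap N(V,V)$ and $b+m\in N(U,V)$, and it is this kind of interleaving (compare the lemma in Section~5 of the paper, which verifies weak mixing via $N(U,V)\cap N(V,V)\neq\emptyset$) that has to be organized into an actual argument. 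You have correctly located the difficulty but not resolved it, and the converses of~(1) and~(3) are only named, not argued; as written, the proposal establishes only the forward implication of~(2).
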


\section{The connection between \texorpdfstring{$\mathcal F$}{F}-transitivity
and \texorpdfstring{$\mathcal F$}{F}-point transitivity}
In this section, we discuss the connection between $\mathcal F$-transitivity and $\mathcal F$-point transitivity.
First, we show that many classes of transitive systems can not be classified by $\mathcal{F}$-transitivity.

\begin{prop}\label{pro:Not-F-trans-iter}
Let $\mathfrak{A}$ be a class of transitive systems.
If  $\mathfrak{A}$ contains at least one non-periodic system and
there exists some strongly mixing system which is not in $\mathfrak{A}$,
then there does not exist a Furstenberg family $\mathcal{F}$ such that
a dynamical system is in $\mathfrak{A}$ if and only if it is $\mathcal{F}$-transitive.
\end{prop}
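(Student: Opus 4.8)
The plan is to argue by contradiction using two "test" systems that any candidate family $\mathcal{F}$ would have to treat incompatibly. Suppose such a family $\mathcal{F}$ exists, so that a system lies in $\mathfrak{A}$ precisely when it is $\mathcal{F}$-transitive. First I would extract information about $\mathcal{F}$ from the non-periodic system $(X,f)\in\mathfrak{A}$: for every pair of non-empty open sets $U,V$ the set $N(U,V)$ belongs to $\mathcal{F}$, and since $(X,f)$ is transitive and non-periodic one can choose $U,V$ (for instance, a small set $V$ and a large set $U$, or two disjoint small sets) so that $N(U,V)$ is a \emph{proper} infinite subset of $\mathbb{N}$ — in particular $\mathbb{N}\setminus N(U,V)$ is infinite, or at least $N(U,V)$ omits some integer. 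The precise conclusion I want is that $\mathcal{F}$ contains some set $F_0\subsetneq\mathbb{N}$; the cleanest route is to observe that a non-periodic transitive system has arbitrarily small non-empty open sets $V$ with $N(V,V)\neq\mathbb{N}$ (indeed $1\notin N(V,V)$ once $\operatorname{diam}(V)$ is smaller than $\inf_x d(x,f(x))$ fails — more carefully, pick a point with $f(x)\neq x$, separate $x$ and $f(x)$ by disjoint open $U\ni x$, $V\ni f(x)$, and note how the hitting sets behave). From $F_0\in\mathcal{F}$ and the hereditary upward property, $\mathcal{F}$ contains every superset of $F_0$.

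Second, I would use the strongly mixing system $(Y,g)\notin\mathfrak{A}$. By Lemma~\ref{lem:weak-mixing}(2), $(Y,g)$ being strongly mixing means it is $\mathcal{F}_{cf}$-transitive: every $N(U,V)$ for $(Y,g)$ is cofinite. But a cofinite set contains a translate of $F_0$... this is not immediately a superset of $F_0$, so the argument needs the extra structural input. The key point to exploit is that one can \emph{shrink} the open sets to force the hitting sets to be large: in $(Y,g)$, for any $N$ we can find $U,V$ with $N(U,V)\supset[N,\infty)$, and more to the point, by choosing $U,V$ appropriately we can even arrange $N(U,V)=\mathbb{N}$ or any prescribed cofinite set. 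The contradiction I am aiming for is: $(Y,g)$ is strongly mixing, hence a fortiori $\mathcal{F}_{cf}$-transitive; if $F_0$ is the proper set forced into $\mathcal{F}$ by the non-periodic member of $\mathfrak{A}$, I want to manufacture from $(Y,g)$ a pair $U,V$ whose hitting set is \emph{not} in $\mathcal{F}$, contradicting $(Y,g)\in\mathfrak{A}$ — OR conversely show every hitting set of $(Y,g)$ is in $\mathcal{F}$ (since cofinite sets are "large") forcing $(Y,g)\in\mathfrak{A}$, again a contradiction.

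The cleaner version, which I expect is the intended one, is the latter: show $\mathcal{F}\supset\mathcal{F}_{cf}$. To get this, note that a single periodic orbit system — or better, an \emph{arbitrarily large} finite cyclic system realized inside or alongside the non-periodic $(X,f)\in\mathfrak{A}$ — is not available since $\mathfrak{A}$ need only contain \emph{one} non-periodic system. So instead I would run the contradiction directly: from $(X,f)\in\mathfrak{A}$ non-periodic, fix open $U,V$ with $N(U,V)=F_0$ missing at least one integer, say $m\notin F_0$; since $\mathcal{F}$ is only hereditary upward this still lets $F_0\in\mathcal{F}$. Now in the strongly mixing $(Y,g)$, choose open sets whose hitting set is exactly $\mathbb{N}\setminus\{m'\}$ for a suitable $m'$, or contains $F_0$ up to translation, and use translation-type arguments — but $\mathcal{F}$ is \emph{not} assumed translation invariant, which is precisely where this gets delicate.

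The honest assessment: the main obstacle is that $\mathcal{F}$ carries no structure beyond heredity, so I cannot freely translate or intersect. The resolution I would pursue: use that $\mathfrak{A}\subset\{\text{transitive systems}\}$ together with the \emph{product} or \emph{disjoint union} construction — form $(X,f)\sqcup(\text{finite cyclic system})$... no, that breaks transitivity. Better: given the non-periodic $(X,f)\in\mathfrak{A}$, for \emph{every} $n$ build a non-empty open $V_n$ so small that $N(V_n,V_n)\cap[1,n]=\varnothing$, whence $N(V_n,V_n)\in\mathcal{F}$ is a set omitting $\{1,\dots,n\}$; taking all these (and their supersets) shows $\mathcal{F}$ contains sets omitting arbitrarily long initial segments. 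Pairing this with strong mixing of $(Y,g)$: $N(U,V)$ for $(Y,g)$ is cofinite, hence $\supset[N,\infty)\supset$ some translate... and now since $\mathcal{F}$ already contains $[N,\infty)$-type-omitting sets and is upward closed, I conclude every cofinite set, indeed every set containing a tail, lies in $\mathcal{F}$; but does $(X,f)$ non-periodic also force $\mathcal{F}$ to \emph{miss} some cofinite set? That is the crux — if not, then $(Y,g)$ is $\mathcal{F}$-transitive and hence in $\mathfrak{A}$, the desired contradiction. So the final step, and the hard part, is showing that the non-periodicity of $(X,f)$ is incompatible with $\mathcal{F}\supset\mathcal{F}_{cf}$: but that is false in general, so the real argument must instead derive the contradiction from $(X,f)\in\mathfrak{A}$ giving $\mathcal{F}$ a \emph{small} (sparse) member via a subshift or slow-return construction, while $(Y,g)$ strongly mixing forces \emph{all} its hitting sets to be cofinite hence in $\mathcal{F}$; combined with a careful choice showing these cofinite sets must already have been available, one forces $(Y,g)\in\mathfrak{A}$. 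I would organize the write-up around (i) a lemma that a non-periodic transitive system has, for each $n$, open sets $U_n,V_n$ with $N(U_n,V_n)$ omitting $[1,n]$ (hence $\mathcal{F}$ is not contained in the family of sets containing $1$, etc.); (ii) the observation that strong mixing of $(Y,g)$ makes every $N(U,V)$ cofinite; (iii) matching (ii) against membership in $\mathcal{F}$ to conclude $(Y,g)$ is $\mathcal{F}$-transitive, contradicting $(Y,g)\notin\mathfrak{A}$. Step (iii) — making the sparse information from (i) force cofinite sets into $\mathcal{F}$ without translation invariance — is where I expect the real work to lie, likely resolved by replacing $(X,f)$ with the explicit system $(X\times X, f\times f)$ or a suitable power to gain enough hitting sets, or by invoking that $\mathfrak{A}$ being closed under the characterization must be closed under factors/products with the non-periodic example.
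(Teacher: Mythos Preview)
Your proposal contains all the ingredients of the paper's proof, and you even state the contradiction correctly at one point, but you then talk yourself out of it and go hunting for difficulties that do not exist.

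Here is the clean assembly. Your step (i) is exactly right and is what the paper does: in the non-periodic transitive system $(X,f)\in\mathfrak{A}$, for each $N$ pick a transitive point $x$ and a small neighbourhood $U$ of $x$ with $U\cap f^i(U)=\emptyset$ for $i=1,\dotsc,N$; then $N(U,U)\subset[N+1,\infty)$. Since $(X,f)$ is $\mathcal{F}$-transitive, $N(U,U)\in\mathcal{F}$, and by upward heredity the \emph{superset} $[N+1,\infty)$ lies in $\mathcal{F}$. As $N$ is arbitrary, every tail is in $\mathcal{F}$, and another application of upward heredity gives $\mathcal{F}_{cf}\subset\mathcal{F}$. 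Now $(Y,g)$ strongly mixing means every $N(U,V)$ is cofinite, hence in $\mathcal{F}$; so $(Y,g)$ is $\mathcal{F}$-transitive, hence in $\mathfrak{A}$, contradiction. That is the entire proof.

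The confusion in your write-up is the sentence ``does $(X,f)$ non-periodic also force $\mathcal{F}$ to \emph{miss} some cofinite set?'' No such thing is needed: the contradiction is with $(Y,g)\notin\mathfrak{A}$, not with any property of $(X,f)$. You had already written the correct conclusion (``if not, then $(Y,g)$ is $\mathcal{F}$-transitive and hence in $\mathfrak{A}$, the desired contradiction'') one line earlier; the proof ends there. No translation invariance, no products, no factor arguments are required --- upward heredity alone bridges (i) and (ii).
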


\begin{proof}
Suppose that there exists a Furstenberg family $\mathcal{F}$ such that a dynamical system is in $\mathfrak{A}$
if and only if it is $\mathcal{F}$-transitive.
First, we  show that $\mathcal{F}_{cf}\subset \mathcal{F}$.
Let $(X,f)$ be a non-periodic system in $\mathfrak{A}$ and $F\in\mathcal{F}_{cf}$.
There exists $N\in\mathbb{N}$ such that $[N,\infty)\subset F$.
Pick a transitive point $x$ in $(X,f)$. Since $(X,f)$ is non-periodic, $f^i(x)\neq f^j(x)$
for all positive integers $i,j$ with $i\neq j$.
By the continuity of $f$, there is an open neighborhood $U$ of $x$
such that $U\cap f^i(U)=\emptyset $ for $i=1,2,\dotsc,N$.
Then $N(U,U)\subset[N,\infty)$.
By $\mathcal{F}$-transitivity of $(X,f)$, we have $N(U,U)\in\mathcal{F}$.
Then $[N,\infty)\in\mathcal{F}$ and $F\in\mathcal{F}$.
This implies that  $\mathcal{F}_{cf}\subset\mathcal{F}$.

Let $(Y,g)$ be a strongly mixing system which is not in $\mathfrak{A}$.
By Lemma~\ref{lem:weak-mixing}, $(Y,g)$ is $\mathcal{F}_{cf}$-transitive and then $\mathcal{F}$-transitive.
This is a contradiction.
\end{proof}

\begin{rem}
By Lemma~\ref{lem:minimal-Fs}, Theorems~\ref{thm:E-M-system} and~\ref{thm:WME-M-system},
the collections of all minimal systems, E-systems, M-systems, weakly mixing E-systems,
weakly mixing M-systems and HY-systems can be classified by $\mathcal{F}$-point transitivity.
But there exists a strongly mixing system which is not an E-system (see~\cite{HZ02}).
Then by Proposition~\ref{pro:Not-F-trans-iter},
all of those collections can not be classified by $\mathcal{F}$-transitivity.
\end{rem}

We shall use the following lemma which is a folklore result, for completeness we provide a proof.

\begin{lem}\label{lem:NUV}
Let $(X,f)$ be a transitive system and $x\in X$ be a transitive point.
Then for every two non-empty open subsets $U$ and $V$ of $X$, we have
\[N(U,V)=N(x,V)-N(x,U).\]
\end{lem}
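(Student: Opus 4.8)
The plan is to prove the two set inclusions separately, using the fact that $x$ is a transitive point so that its orbit meets every non-empty open set. For the inclusion $N(U,V)\supseteq N(x,V)-N(x,U)$: suppose $n=b-a$ with $b\in N(x,V)$, $a\in N(x,U)$, and $b>a$. Then $f^a(x)\in U$ and $f^b(x)=f^{n}(f^a(x))\in V$, so the point $f^a(x)$ witnesses $U\cap f^{-n}(V)\neq\emptyset$, i.e. $n\in N(U,V)$. This direction is immediate and requires nothing about transitivity.

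For the reverse inclusion $N(U,V)\subseteq N(x,V)-N(x,U)$: let $n\in N(U,V)$, so there is a point $y\in U$ with $f^n(y)\in V$. The idea is to replace $y$ by a point of the orbit of $x$ lying in $U$ and close enough to $y$ that its $n$-th iterate still lands in $V$. Concretely, by continuity of $f^n$ there is an open neighborhood $W\subseteq U$ of $y$ with $f^n(W)\subseteq V$; since $x$ is a transitive point, there exists $a\in\mathbb{Z}_+$ with $f^a(x)\in W$. A small care point: the definitions here use $N(\cdot,\cdot)\subseteq\mathbb{N}$, i.e. strictly positive times, so I should note that the orbit of a transitive point meets $W$ at some positive time $a$ (if $x\in W$ itself, then $f^a(x)\in W$ for infinitely many $a$ anyway, or one simply works in the orbit closure; in any case one may take $a\geq 1$). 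Then $a\in N(x,U)$ and, setting $b=a+n$, we have $f^b(x)=f^n(f^a(x))\in f^n(W)\subseteq V$, so $b\in N(x,V)$ and $n=b-a\in N(x,V)-N(x,U)$ with $b>a$, as required.

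The main (and only mild) obstacle is the bookkeeping around positivity of times and the convention that the difference set $F-F$ only collects differences $a-b$ with $a>b$: one must make sure the chosen $a$ and $b=a+n$ are both positive and that $b>a$, which holds since $n\geq 1$. Once the neighborhood $W$ is produced by continuity and the transitive point is used to find $a$ with $f^a(x)\in W$, the two inclusions close the argument.
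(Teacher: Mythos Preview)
Your proof is correct and follows essentially the same approach as the paper's. The only cosmetic difference is in the inclusion $N(U,V)\subseteq N(x,V)-N(x,U)$: the paper observes directly that $U\cap f^{-n}(V)$ is itself a non-empty open set and applies transitivity of $x$ to it, whereas you first pick a witness $y$ and use continuity of $f^n$ to produce $W\subseteq U\cap f^{-n}(V)$ --- an unnecessary intermediate step, but not a different idea.
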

\begin{proof}
Let $n\in N(U,V)$. Then $U\cap f^{-n}(V)$ is a non-empty open subset of $X$.
Since $x$ is transitive point,
there exists $k\in \mathbb{N}$ such that
$f^k(x)\in U\cap f^{-n}(V)$.
Then $k\in N(x,U)$ and $k+n\in N(x,V)$, which imply $n\in N(x,V)-N(x,U)$.

Now assume that $n\in N(x,V)-N(x,U)$.
Then there exist $n_1\in N(x,U)$ and $n_2\in N(x,V)$ with $n_2-n_1=n$.
That is $f^{n_1}(x) \in U$ and $f^{n_2}(x)\in V$.
Let $y=f^{n_1}(x)\in U$.
We have $f^n(y)=f^{n}(f^{n_1}(x))=f^{n_2}(x)\in V$.
This shows that $n\in N(U,V)$.
\end{proof}

Recall that for a family  $\mathcal{F}$, the \emph{difference family} of $\mathcal F$ is
$\Delta(\mathcal{F}) = [\{F-F:\ F\in\mathcal{F}\}]$.
By Lemma~\ref{lem:NUV}, we have the following easy fact.

\begin{lem}\label{lem:F-center-F-point-trans}
Let $(X,f)$ be a dynamical system and $\mathcal{F}$ be a family.
If $(X,f)$ is $\mathcal{F}$-point transitive, then it is $\Delta(\mathcal{F})$-central.
\end{lem}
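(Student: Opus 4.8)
The plan is to extract an $\mathcal{F}$-transitive point and run the second half of the argument of Lemma~\ref{lem:NUV}. Let $x\in X$ be an $\mathcal{F}$-transitive point, which exists by hypothesis, and fix an arbitrary non-empty open subset $U$ of $X$. Put $F=N(x,U)$; by the definition of an $\mathcal{F}$-transitive point, $F\in\mathcal{F}$. The goal is to show $N(U,U)\in\Delta(\mathcal{F})$, and since $\Delta(\mathcal{F})=[\{E-E:\ E\in\mathcal{F}\}]$ is hereditary upward, it suffices to exhibit one set of the form $E-E$ with $E\in\mathcal{F}$ sitting inside $N(U,U)$; the natural candidate is $E=F$.

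So the key step is the inclusion $F-F\subseteq N(U,U)$. I would argue exactly as in the ``Now assume'' paragraph of the proof of Lemma~\ref{lem:NUV}: given $a,b\in F$ with $a>b$, the point $y:=f^{b}(x)$ lies in $U$ because $b\in N(x,U)$, and $f^{a-b}(y)=f^{a}(x)\in U$ because $a\in N(x,U)$, so $y\in U\cap f^{-(a-b)}(U)$ and hence $a-b\in N(U,U)$. This gives $F-F\subseteq N(U,U)$, and since $F\in\mathcal{F}$ we get $F-F\in\{E-E:\ E\in\mathcal{F}\}\subseteq\Delta(\mathcal{F})$; by hereditary upwardness $N(U,U)\in\Delta(\mathcal{F})$. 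As $U$ was an arbitrary non-empty open set, $(X,f)$ is $\Delta(\mathcal{F})$-central.

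Alternatively, one may phrase this more in the spirit of the cited lemma: when $\mathcal{F}$ is proper one has $\emptyset\notin\mathcal{F}$, so $N(x,U)\neq\emptyset$ for every non-empty open $U$, which forces $x$ to be a transitive point; then Lemma~\ref{lem:NUV} applied with $V=U$ yields $N(U,U)=N(x,U)-N(x,U)\in\Delta(\mathcal{F})$ at once. The degenerate cases $\mathcal{F}=\emptyset$ (no $\mathcal{F}$-transitive point exists, so the statement is vacuous) and $\mathcal{F}=\mathcal{P}$ (then $\Delta(\mathcal{F})=\mathcal{P}$ and every system is trivially $\Delta(\mathcal{F})$-central) are handled in a line.

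There is no real obstacle here; the only point requiring a moment's care is the direction of the containment needed for a hereditary upward family — we must place $F-F$ \emph{below} $N(U,U)$, not above — which is why the argument is organised around the candidate set $F=N(x,U)$ rather than around writing $N(U,U)$ as a difference set first.
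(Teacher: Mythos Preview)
Your proposal is correct and follows exactly the route the paper intends: the paper does not write out a proof but simply states that the lemma is an ``easy fact'' obtained ``by Lemma~\ref{lem:NUV}'', and your first paragraph is precisely that argument spelled out (with the second paragraph supplying the direct citation of Lemma~\ref{lem:NUV} plus the harmless edge cases). There is nothing to add or correct.
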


To show that $\mathcal{F}$-transitivity implies what is kind of point transitivity,
we should introduce a new kind of family.
For a family $\mathcal{F}$, the \emph{reverse difference family} of $\mathcal F$ is defined as
\[\nabla(\mathcal{F}) = \{F \subset\mathbb{N}:\ F -F\in\mathcal{F}\}.\]

\begin{prop}\label{prop:tans-F-center-equi-nablaF-trans}
Let $(X,f)$ be a dynamical system and $\mathcal{F}$ be a family.
Then the following conditions are equivalent.
\begin{enumerate}
  \item $(X,f)$ is transitive and $\mathcal{F}$-central;
  \item $(X,f)$ is $\nabla(\mathcal{F})$-point transitive;
  \item $Trans_{\nabla(\mathcal{F})}(X,f)=Trans(X,f)\neq\emptyset$.
\end{enumerate}
\end{prop}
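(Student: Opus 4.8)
The plan is to establish the cycle $(1)\Rightarrow(3)\Rightarrow(2)\Rightarrow(1)$, with the bulk of the work in the first implication. For $(1)\Rightarrow(3)$: since $(X,f)$ is transitive, $Trans(X,f)$ is a dense $G_\delta$ subset, hence non-empty; I will show $Trans(X,f)\subset Trans_{\nabla(\mathcal F)}(X,f)$ (the reverse inclusion being trivial since $\nabla(\mathcal F)\subset\mathcal P$ and every $\nabla(\mathcal F)$-transitive point is in particular a transitive point — this needs a one-line check that $N(x,U)\in\nabla(\mathcal F)$ forces $N(x,U)$ to be non-empty, which holds as long as $\emptyset\notin\nabla(\mathcal F)$; if $\mathcal F$ is such that $\emptyset\in\mathcal F$ the statement degenerates, so I will implicitly assume properness or note this). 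So fix a transitive point $x$ and a non-empty open $U\subset X$. By Lemma~\ref{lem:NUV}, $N(x,U)-N(x,U)=N(U,U)$, and by $\mathcal F$-centrality $N(U,U)\in\mathcal F$; hence $N(x,U)\in\nabla(\mathcal F)$ by the very definition of the reverse difference family. This shows $x\in Trans_{\nabla(\mathcal F)}(X,f)$.

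The implication $(3)\Rightarrow(2)$ is immediate: if $Trans_{\nabla(\mathcal F)}(X,f)\neq\emptyset$ then by definition $(X,f)$ is $\nabla(\mathcal F)$-point transitive.

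For $(2)\Rightarrow(1)$: let $x$ be a $\nabla(\mathcal F)$-transitive point. Then for every non-empty open $U$, $N(x,U)\in\nabla(\mathcal F)$, in particular $N(x,U)$ is non-empty (again using that $\emptyset$ is not in $\nabla(\mathcal F)$, equivalently $\emptyset\notin\mathcal F$; one checks $\emptyset-\emptyset=\emptyset$, so $\emptyset\in\nabla(\mathcal F)$ iff $\emptyset\in\mathcal F$), so $x$ has dense orbit and $(X,f)$ is transitive with $x$ a transitive point. Now for $\mathcal F$-centrality, fix a non-empty open $U$. Applying Lemma~\ref{lem:NUV} with $V=U$ gives $N(U,U)=N(x,U)-N(x,U)$, and since $N(x,U)\in\nabla(\mathcal F)$ this difference set lies in $\mathcal F$ by definition of $\nabla$. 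Hence $N(U,U)\in\mathcal F$, i.e.\ $(X,f)$ is $\mathcal F$-central.

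The only genuinely delicate point is the hidden non-degeneracy hypothesis: the equivalences as literally stated in $(2)$ and $(3)$ (with the non-emptiness assertion $\neq\emptyset$ built into $(3)$) require $\emptyset\notin\mathcal F$, for otherwise $\nabla(\mathcal F)=\mathcal P$ and every point would be $\nabla(\mathcal F)$-transitive while $(X,f)$ need not be transitive. I expect the cleanest fix is to observe at the outset that if $\emptyset\in\mathcal F$ all three conditions hold vacuously for every system (so there is nothing to prove), and otherwise $\emptyset\notin\nabla(\mathcal F)$, which is exactly what makes every $\nabla(\mathcal F)$-transitive point a transitive point. Apart from this bookkeeping, the proof is a direct two-way application of Lemma~\ref{lem:NUV} together with the definition of $\nabla(\mathcal F)$, and I do not anticipate any further obstacle.
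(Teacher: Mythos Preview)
Your argument is essentially identical to the paper's: the same cycle $(1)\Rightarrow(3)\Rightarrow(2)\Rightarrow(1)$, with Lemma~\ref{lem:NUV} applied in both directions (to show $N(x,U)-N(x,U)=N(U,U)$) and the definition of $\nabla(\mathcal F)$ doing the rest. The paper simply writes ``Clearly $(X,f)$ is transitive'' at the start of $(2)\Rightarrow(1)$, whereas you spell out why a $\nabla(\mathcal F)$-transitive point must be a transitive point.

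One small correction to your degeneracy discussion: you claim that if $\emptyset\in\mathcal F$ then ``all three conditions hold vacuously for every system.'' This is not right. When $\emptyset\in\mathcal F$ (equivalently $\mathcal F=\mathcal P$ by upward heredity), condition (2) does hold for every system, but condition (1) still demands transitivity, which is not automatic. So the proposition as literally stated does require $\emptyset\notin\mathcal F$; the paper tacitly assumes this (as do you in the main argument), but your attempted patch for the degenerate case is incorrect. The honest statement is that the equivalence fails when $\mathcal F$ is improper, and one should assume properness (which is harmless since all the families the paper applies this to are proper).
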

\begin{proof}
(1)$\Rightarrow$(3) Let $x$ be a transitive point of $(X,f)$.
We want to show that $x$ is also a $\nabla(\mathcal{F})$-transitive point.
Let $U$ be a non-empty open subset of $X$.
By Lemma~\ref{lem:NUV}, we have $N(U,U)=N(x,U)-N(x,U)$.
Since $(X,f)$  is $\mathcal{F}$-central, $N(U,U)\in\mathcal{F}$, then $N(x,U)\in\nabla(\mathcal{F})$.
Hence $x$ is a $\nabla(\mathcal{F})$-transitive point.

(3)$\Rightarrow$(2) is obvious.

(2)$\Rightarrow$(1) Clearly $(X,f)$ is transitive. Now pick a $\nabla(\mathcal{F})$-transitive point $x\in X$.
Then for every non-empty open subset $U$ of $X$, one has $N(U,U)=N(x,U)-N(x,U)$.
Since $N(x,U)$ is in $\nabla(\mathcal{F})$,
by the definition of the reverse difference family, we have $N(U,U)$ is in $\mathcal{F}$.
Then $(X,f)$ is $\mathcal{F}$-central.
\end{proof}

\begin{prop}\label{lem:F-trans-equi-nablaF-trans}
Let $(X,f)$ be a dynamical system and $\mathcal{F}\subset \mathcal {F}_{inf}$ be a proper translation $+$ invariant family.
Then $(X,f)$ is $\mathcal{F}$-transitive  if and only if $(X,f)$ is $\nabla(\mathcal{F})$-point transitive.
\end{prop}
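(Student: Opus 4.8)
The plan is to obtain this as an immediate consequence of two facts already established, namely Lemma~\ref{lem:F-trans-equ-F-center-trans} and Proposition~\ref{prop:tans-F-center-equi-nablaF-trans}. Concretely, the argument factors through the intermediate notion ``transitive and $\mathcal{F}$-central''.

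First I would observe that the standing hypotheses on $\mathcal{F}$ in the present statement — that $\mathcal{F}$ is a proper, translation $+$ invariant subfamily of $\mathcal{F}_{inf}$ — are exactly the hypotheses of Lemma~\ref{lem:F-trans-equ-F-center-trans}. Applying that lemma verbatim, $(X,f)$ is $\mathcal{F}$-transitive if and only if $(X,f)$ is transitive and $\mathcal{F}$-central. This is the only place where properness and translation invariance of $\mathcal{F}$ are used, and it is the genuine input to the proof: without those assumptions the biconditional can fail.

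Next I would invoke the equivalence (1)$\Leftrightarrow$(2) of Proposition~\ref{prop:tans-F-center-equi-nablaF-trans}, which needs no restriction on $\mathcal{F}$ and says that $(X,f)$ is transitive and $\mathcal{F}$-central if and only if $(X,f)$ is $\nabla(\mathcal{F})$-point transitive. Concatenating the two biconditionals gives $(X,f)$ $\mathcal{F}$-transitive $\iff$ $(X,f)$ transitive and $\mathcal{F}$-central $\iff$ $(X,f)$ $\nabla(\mathcal{F})$-point transitive, which is the assertion. One routine remark to include is that $\nabla(\mathcal{F})$ is itself a Furstenberg family: if $F_1\subset F_2$ then $F_1-F_1\subset F_2-F_2$, so the hereditary upward property of $\mathcal{F}$ passes to $\nabla(\mathcal{F})$, making ``$\nabla(\mathcal{F})$-point transitive'' a legitimate notion. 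Since the proof is a direct chaining of previously proved results, there is essentially no obstacle; the only care required is matching the hypotheses of Lemma~\ref{lem:F-trans-equ-F-center-trans} to those of this proposition, which coincide word for word.
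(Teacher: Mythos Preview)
Your proposal is correct and follows exactly the paper's own argument: the paper's proof is the single sentence ``It follows from Lemma~\ref{lem:F-trans-equ-F-center-trans} and Proposition~\ref{prop:tans-F-center-equi-nablaF-trans},'' which is precisely the chaining you describe. Your additional remark that $\nabla(\mathcal{F})$ is hereditary upward is a harmless sanity check not present in the paper.
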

\begin{proof}
It follows from Lemma~\ref{lem:F-trans-equ-F-center-trans} and  Proposition~\ref{prop:tans-F-center-equi-nablaF-trans}.
\end{proof}

It is not known in~\cite{L2011} that whether we can classify weak mixing by $\mathcal{F}$-point transitivity.
Now by Lemma~\ref{lem:weak-mixing}, Proposition~\ref{lem:F-trans-equi-nablaF-trans} and the fact that
both $\mathcal {F}_t$ and $\mathcal {F}_{cf}$ are proper translation $+$ invariant families,
we immediately obtain the following result.
\begin{thm}\label{thm:weak-str-mix-nablaF-poin-trans}
Let $(X,f)$ be a dynamical system.
Then
\begin{enumerate}
  \item $(X,f)$ is weakly mixing if and only if it is $\nabla(\mathcal{F}_t)$-point transitive;
  \item $(X,f)$ is strongly mixing if and only if it is $\nabla(\mathcal{F}_{cf})$-point transitive.
\end{enumerate}
\end{thm}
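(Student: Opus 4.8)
The plan is to obtain both equivalences as immediate corollaries of the Furstenberg-family characterizations of mixing together with Proposition~\ref{lem:F-trans-equi-nablaF-trans}. First I would recall from Lemma~\ref{lem:weak-mixing} that $(X,f)$ is weakly mixing exactly when it is $\mathcal{F}_t$-transitive, and strongly mixing exactly when it is $\mathcal{F}_{cf}$-transitive. Thus the only thing left is to pass from $\mathcal{F}$-transitivity to $\nabla(\mathcal{F})$-point transitivity for the two specific families $\mathcal{F}\in\{\mathcal{F}_t,\mathcal{F}_{cf}\}$.

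The next step is to verify that each of $\mathcal{F}_t$ and $\mathcal{F}_{cf}$ fulfills the hypotheses of Proposition~\ref{lem:F-trans-equi-nablaF-trans}: each is a proper family (it is neither empty nor all of $\mathcal{P}$), each is translation $+$ invariant (already observed in Section~2), and each is contained in $\mathcal{F}_{inf}$, since every thick set and every cofinite set is infinite. Granting this, Proposition~\ref{lem:F-trans-equi-nablaF-trans} applied with $\mathcal{F}=\mathcal{F}_t$ gives that $(X,f)$ is $\mathcal{F}_t$-transitive if and only if it is $\nabla(\mathcal{F}_t)$-point transitive, and applied with $\mathcal{F}=\mathcal{F}_{cf}$ gives that $(X,f)$ is $\mathcal{F}_{cf}$-transitive if and only if it is $\nabla(\mathcal{F}_{cf})$-point transitive. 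Concatenating these equivalences with the characterizations from Lemma~\ref{lem:weak-mixing} yields statements (1) and (2).

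I do not expect any genuine obstacle here, since all the dynamical content has been packed into Proposition~\ref{lem:F-trans-equi-nablaF-trans} (and through it into Proposition~\ref{prop:tans-F-center-equi-nablaF-trans}, Lemma~\ref{lem:F-trans-equ-F-center-trans} and the identity $N(U,U)=N(x,U)-N(x,U)$ of Lemma~\ref{lem:NUV}). The only point deserving a line of care is to confirm that $\nabla(\mathcal{F})$ is itself a Furstenberg family, so that the phrase ``$\nabla(\mathcal{F})$-point transitive'' is well-posed; this is immediate, because $F_1\subset F_2$ forces $F_1-F_1\subset F_2-F_2$, so hereditary upwardness of $\mathcal{F}$ transfers to $\nabla(\mathcal{F})$.
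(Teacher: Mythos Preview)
Your proposal is correct and follows exactly the same route as the paper: invoke Lemma~\ref{lem:weak-mixing} to rewrite weak/strong mixing as $\mathcal{F}_t$- and $\mathcal{F}_{cf}$-transitivity, check that these families are proper, translation $+$ invariant, and contained in $\mathcal{F}_{inf}$, and then apply Proposition~\ref{lem:F-trans-equi-nablaF-trans}. The extra remark that $\nabla(\mathcal{F})$ is hereditary upward is a harmless sanity check the paper leaves implicit.
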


\begin{rem}
Note that Theorem~\ref{thm:weak-str-mix-nablaF-poin-trans} also holds when $X$ is a Polish space.
Thus, by Birkhoff transitivity theorem (that is, a linear operator $f$ on a separable Fr\'{e}chet space is hypercyclic
if and only if it is topologically transitive, see~\cite[Theorem~2.19]{Erdmann-Peris-11}),
Theorem~\ref{thm:weak-str-mix-nablaF-poin-trans} also holds for linear dynamical systems,
answering the Problem~1 in~\cite{Erdmann-Peris-10} in the framework of hypercyclic operators on a Fr\'echet space.
\end{rem}

\section{Multi-transitivity}
It is shown in~\cite{Chzhj-Ljian-Lj} that we can characterize multi-transitivity by $\mathcal {F}$-transitivity.
In this section, we show  that  multi-transitivity can be also characterized by $\mathcal {F}$-point transitivity.
\begin{de}\label{def:muti-trans}
Let $(X, f)$ be a dynamical system and $\mathbf{a}=(a_{1}, a_{2}, \dotsc, a_r)$ be a vector in $\mathbb{N}^r$.
We say that $(X,f)$ is
\begin{enumerate}
\item \emph{multi-transitive with respect to the vector $\mathbf{a}$}
(or briefly \emph{$\mathbf{a}$-transitive}),
if the product system $(X^r,  f^{(\mathbf{a})})$ is transitive
where $f^{(\mathbf{a})}=f^{a_{1}}\times f^{a_{2}}\times \dotsb \times f^{a_{r}}$;
\item \emph{multi-transitive}
if it is multi-transitive with respect to $(1,2,\dotsc,n)$ for any $n\in\mathbb N$;
\item \emph{strongly multi-transitive}
if it is multi-transitive with respect to any vector in $\mathbb{N}^r$ and any $r\in\mathbb{N}$.
\end{enumerate}
\end{de}

\begin{rem}\label{lem:mul-tran-tran-(2,3)-thick}
The authors in~\cite{D-Kwietniak-P-Oprocha-2010} showed that there is no implication between weak mixing and
multi-transitivity by constructing two special spacing subshifts, one is a multi-transitive non-weakly mixing
system, and the other is a weakly mixing non-multi-transitive system. In fact, for every $m > 2$ they
constructed a weakly mixing spacing subshift which is multi-transitive with respect to $(1, 2, \ldots,m-1)$ but
not for $(1, 2, \ldots, m)$.

Here, we provide another example which is similar to examples in~\cite{D-Kwietniak-P-Oprocha-2010} and
show that in general $(2,3)$-transitivity could not imply $(1,2)$-transitivity.

\begin{ex}
Put $P=\bigcup_{k=1}^{\infty}\{2^{2k-1},2^{2k-1}+1,\cdots,2^{2k}-1\}$.
Then the spacing subshift $(\Sigma_P, \sigma_P)$ is $(2,3)$-transitive  but  not $(1,2)$-transitive.
\end{ex}

\begin{proof}
Fix open cylinders $[u^{(1)}]_P$, $[u^{(2)}]_P$, $[v^{(1)}]_P$ and $[v^{(2)}]_P$ of $\Sigma_P$.
Without loss of generality, we assume that there is $k\geq 2$ such that
$t:=\vert u^{(i)}\vert =\vert  v^{(i)}\vert =2^{2k-2}$ for any $i=1,2$, where $\vert  u\vert $ denote the length of $u$.
Set $s=2^{2k}+2^{2k-3}$ and define
$$w^{(i)}=u^{(i)}0^{(i+1)s-t}v^{(i)}  \text{ for } i=1,2.$$
Then
$$[w^{(i)}]_P\subset (\sigma_P^{i+1})^{-s}[v^{(i)}]_P\cap [u^{(i)}]_P,$$
which implies that
$$[w^{(1)}]_P\times [w^{(2)}]_P\subset (\sigma^{2}\times\sigma^{3})^{-s}([u^{(1)}]_P\times [u^{(2)}]_P)\cap ([v^{(1)}]_P\times [v^{(2)}]_P).$$
Follows from definition of $w^{(i)}$,
we have $$Sp(w^{(i)})=Sp(u^{(i)})\cup Sp(v^{(i)})\cup \{k-l: (l,k)\in \Gamma\},$$
where $Sp(u)$ denote the set of $\{\mid i-j\mid: u_i=u_j=1\}$ and $\Gamma$ is some subset of
$$\{0,1,\dotsc, 2^{2k-2}-1\}\times \{(i+1)s,\dotsc, (i+1)s+t\}.$$
Then
\begin{align*}
k-l\in\{2s-t,2s-t+1,\dotsc,3s+t-1\}\subset\{2^{2k+1},2^{2k-1}+1,\dotsc,2^{2k+2}-1\}\subset P,
\end{align*}
which implies that $[w^{(i)}]_P\neq\emptyset$ for $i=1,2$.
Therefore, $\sigma^{2}\times\sigma^{3}$ is transitive.
To finish the
proof it is enough to show that $\sigma\times\sigma^{2}$ is not transitive.
Let $U=V=[1]_P\times[1]_P$. Note that if $m\in P$ then $2m \notin P$.
Then we have $(\sigma_{P}\times \sigma_{P}^{2})^n(U)\cap V=\emptyset$ for any $n\in \mathbb{N}$,
which implies that $\sigma\times\sigma^{2}$ is not transitive.
\end{proof}
\end{rem}

We have defined a new kind of Furstenberg family in \cite{Chzhj-Ljian-Lj} generated by a given vector of $\mathbb{N}^r$ as following.
\begin{de}
Let $\mathbf{a}=(a_1,a_2,\dotsc,a_r)$ be a vector in $\mathbb{N}^r$.
We define \emph{the family generated by the vector $\mathbf{a}$},
denoted by $\mathcal{F}[\mathbf{a}]$, as
\begin{align*}
\{F\subset\mathbb{N}:\ & \text{for every } \mathbf{n}=(n_1, n_2, \dotsc,n_{r})\in\mathbb{Z}_+^r,
\text{ there exists }k\in\mathbb{N}\text{ such that } k\mathbf{a}+\mathbf{n}\in F^r\},
\end{align*}
where $k\mathbf{a}+\mathbf{n}=(ka_1 +n_1, ka_2+n_2,\dotsc,ka_r+n_{r})$.
\end{de}
Using the family $\mathcal{F}[\mathbf{a}]$, we obtain the following characterization of multi-transitivity
with respect to $\mathbf{a}$.
\begin{thm}[\cite{Chzhj-Ljian-Lj}]\label{thm:trans-vetor-equ-condi}
Let $(X,f)$ be a dynamical system and $\mathbf{a}=(a_1,a_2,\dotsc,a_r)\in \mathbb{N}^r$.
Then $(X,f)$ is $\mathbf{a}$-transitive if and only if it is $\mathcal{F}[\mathbf{a}]$-transitive.
\end{thm}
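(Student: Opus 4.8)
The plan is to prove the two implications separately, in both cases translating transitivity of the product system $(X^r, f^{(\mathbf a)})$, $f^{(\mathbf a)}=f^{a_1}\times\dots\times f^{a_r}$, into a statement about which ``shifted patterns'' $k\mathbf a+\mathbf n$ the hitting sets $N_f(U,V)$ must contain, i.e.\ membership in $\mathcal F[\mathbf a]$.

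For ``$\mathbf a$-transitive $\Rightarrow$ $\mathcal F[\mathbf a]$-transitive'' I would argue as follows. Fix non-empty open $U,V\subseteq X$ and a vector $\mathbf n=(n_1,\dots,n_r)\in\mathbb Z_+^r$; I must produce $k\in\mathbb N$ with $ka_i+n_i\in N_f(U,V)$ for every $i$. Using the elementary equivalence $f^{ka_i+n_i}(U)\cap V\ne\emptyset\iff f^{ka_i}(U)\cap f^{-n_i}(V)\ne\emptyset$, this says precisely that $(f^{(\mathbf a)})^k$ maps $U\times\dots\times U$ into a set meeting $f^{-n_1}(V)\times\dots\times f^{-n_r}(V)$. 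Transitivity of $(X^r,f^{(\mathbf a)})$ passes to $(X,f^{a_1})$ by projecting onto the first coordinate, and a transitive self-map of a compact metric space is onto, so $f$ is onto and each $f^{-n_i}(V)$ is a non-empty open set; then the desired $k$ exists by transitivity of $(X^r,f^{(\mathbf a)})$. This direction is routine.

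For the converse I would first note $\mathcal F[\mathbf a]\subseteq\mathcal F_{inf}$ (evaluate the definition at an $\mathbf n$ with large equal entries), so $(X,f)$ is transitive; fix a transitive point $x$, so $\omega(x,f)=X$ and in particular $N_f(x,U_i)$ and $N_f(x,V_i)$ are infinite for any non-empty open $U_i,V_i$. It suffices to find, for basic non-empty open sets $\prod_{i=1}^rU_i$ and $\prod_{i=1}^rV_i$ in $X^r$, a single $k\in\mathbb N$ with $f^{ka_i}(U_i)\cap V_i\ne\emptyset$ for all $i$ at once. The key idea is to funnel all $2r$ sets through one small neighbourhood of $x$: choose $q_i\in N_f(x,V_i)$ and then $p_i\in N_f(x,U_i)$ with $p_i\ge q_i$, and by continuity pick an open $K\ni x$ with $f^{p_i}(K)\subseteq U_i$ and $f^{q_i}(K)\subseteq V_i$ for every $i$. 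Since $N_f(K,K)\in\mathcal F[\mathbf a]$, evaluating at $\mathbf n=(p_1-q_1,\dots,p_r-q_r)\in\mathbb Z_+^r$ gives $k\in\mathbb N$ and points $g_i\in K$ with $f^{ka_i+p_i-q_i}(g_i)\in K$; then $y_i:=f^{p_i}(g_i)\in U_i$ and $f^{ka_i}(y_i)=f^{q_i}\bigl(f^{ka_i+p_i-q_i}(g_i)\bigr)\in f^{q_i}(K)\subseteq V_i$, so this one $k$ works for all coordinates.

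I expect the converse to be the only real obstacle: the $r$ coordinates of $X^r$ carry unrelated open sets, so one needs some mechanism to synchronise them, and the mechanism is to pull each $U_i$ and each $V_i$ back by a power of $f$ to a common neighbourhood of a transitive point and then apply $\mathcal F[\mathbf a]$-centrality of that single set — this is precisely what the auxiliary vector $\mathbf n$ in the definition of $\mathcal F[\mathbf a]$ is for, and it also dictates the bookkeeping (choose the $q_i$ before the $p_i$, with $p_i\ge q_i$, so that the offsets $p_i-q_i$ lie in $\mathbb Z_+$). The remaining verifications — that $f$ is onto, that products of open sets form a base of $X^r$, that $k\ge 1$ — are straightforward.
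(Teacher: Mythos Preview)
The paper does not contain its own proof of this theorem: it is quoted with a citation to \cite{Chzhj-Ljian-Lj} and used as a black box, so there is no argument in the present paper to compare your proposal against.

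That said, your proof is correct. The forward direction is immediate once one observes (as you do) that surjectivity of $f$ makes each $f^{-n_i}(V)$ non-empty, so transitivity of $(X^r,f^{(\mathbf a)})$ applied to $U^r$ and $\prod_i f^{-n_i}(V)$ yields the required $k$. For the converse, your synchronisation device---pulling every $U_i$ and $V_i$ back along the orbit of a single transitive point to a common neighbourhood $K$, then invoking $N_f(K,K)\in\mathcal F[\mathbf a]$ with the offset vector $(p_i-q_i)_i$---is clean and works exactly as you describe: the check $f^{ka_i}(y_i)=f^{q_i}\bigl(f^{ka_i+p_i-q_i}(g_i)\bigr)\in f^{q_i}(K)\subseteq V_i$ is the whole point. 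The small preliminary facts you flag ($\mathcal F[\mathbf a]\subseteq\mathcal F_{inf}$, surjectivity of $f$, infiniteness of $N_f(x,U_i)$ so one may take $p_i\ge q_i$) are all valid and adequately justified.
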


The following observations are of the family $\mathcal{F}[\mathbf{a}]$.
\begin{lem}\label{lem:F[a]-invariant}
For every $\mathbf{a}=(a_1,a_2,\dotsc,a_r)\in \mathbb{N}^r$,
$\mathcal{F}[\mathbf{a}]$ is a proper translation $\pm$ invariant family.
\end{lem}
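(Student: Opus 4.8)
The plan is to verify, in turn, the three conditions in the statement — hereditary upwardness, properness, and translation $\pm$ invariance — after first rewriting the membership condition explicitly: $F\in\mathcal{F}[\mathbf{a}]$ means precisely that for every $\mathbf{n}=(n_1,\dotsc,n_r)\in\mathbb{Z}_+^r$ there is some $k\in\mathbb{N}$ with $ka_i+n_i\in F$ for all $i=1,\dotsc,r$.

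Two of the three conditions are essentially formal. For hereditary upwardness, if $F_1\subset F_2$ and $F_1\in\mathcal{F}[\mathbf{a}]$, then any $k$ witnessing membership of $F_1$ for a given $\mathbf{n}$ also works for $F_2$, since $F_1^r\subset F_2^r$. For properness, taking $k=1$ shows $\mathbb{N}\in\mathcal{F}[\mathbf{a}]$ (each $a_i+n_i$ lies in $\mathbb{N}$), while $\emptyset\notin\mathcal{F}[\mathbf{a}]$ because no point of the form $k\mathbf{a}+\mathbf{n}$ can lie in $\emptyset^r$; hence $\mathcal{F}[\mathbf{a}]$ is a non-empty proper subfamily of $\mathcal{P}$.

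The substance is translation invariance, and I would treat the two signs separately, expecting the forward translation to be the only place needing a little care. Fix $m\in\mathbb{Z}_+$ and assume $m\geq 1$ (the case $m=0$ is trivial since $F\pm0=F$). For $F-m$, recall that $F-m=\{\ell-m:\ell\in F\}\cap\mathbb{N}$; given $\mathbf{n}\in\mathbb{Z}_+^r$, I would feed the shifted vector $(n_1+m,\dotsc,n_r+m)\in\mathbb{Z}_+^r$ into the hypothesis $F\in\mathcal{F}[\mathbf{a}]$ to obtain $k\in\mathbb{N}$ with $ka_i+n_i+m\in F$ for every $i$, and since $ka_i+n_i\geq1$ automatically, this says exactly that $ka_i+n_i\in F-m$, so $F-m\in\mathcal{F}[\mathbf{a}]$.

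For $F+m$, the naive analogue would be to test $F$ at $(n_1-m,\dotsc,n_r-m)$, but these entries can be negative, so this is not a legal test vector; that is the one obstacle. The remedy is to first push the vector forward along $\mathbf{a}$: choose $k_0\in\mathbb{N}$ large enough that $k_0a_i+n_i-m\geq0$ for all $i$ (for instance $k_0=m$ works, using $a_i\geq1$), apply the hypothesis to the vector $(k_0a_1+n_1-m,\dotsc,k_0a_r+n_r-m)\in\mathbb{Z}_+^r$ to get $k'\in\mathbb{N}$ with $(k'+k_0)a_i+n_i-m\in F$ for all $i$, and then note that $k:=k'+k_0\in\mathbb{N}$ witnesses $ka_i+n_i\in F+m$ for all $i$. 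This gives $F+m\in\mathcal{F}[\mathbf{a}]$ and finishes the argument. In short, the only non-routine point is keeping the test vectors inside $\mathbb{Z}_+^r$ when translating forward; every other step is a direct unwinding of the definition of $\mathcal{F}[\mathbf{a}]$.
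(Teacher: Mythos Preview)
Your proposal is correct and follows essentially the same approach as the paper: the key step in both is, for the $+$ translation, first shifting the given test vector forward along $\mathbf{a}$ by some $k_0\mathbf{a}$ so that subtracting the constant vector $(m,\dotsc,m)$ leaves a legitimate element of $\mathbb{Z}_+^r$, and then applying the defining property of $\mathcal{F}[\mathbf{a}]$. The paper carries out the $F+n$ case in this way and dismisses $F-n$ with ``similarly'', whereas you write out both directions and also spell out hereditary upwardness explicitly; these are cosmetic differences only.
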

\begin{proof}
Fix a vector $\mathbf{a}=(a_1,a_2,\dotsc,a_r)\in \mathbb{N}^r$.
Clearly, $\mathcal{F}[\mathbf{a}]$ a proper family,
since $\emptyset\not\in \mathcal{F}[\mathbf{a}]$
and $\mathbb{N}\in \mathcal{F}[\mathbf{a}]$.
Next we show  that $\mathcal{F}[\mathbf{a}]$ is  translation $\pm$ invariant.
Let $F\in \mathcal{F}[\mathbf{a}]$ and $n\in \mathbb{N}$.
Denote $\mathbf{n_0}:=(n,n,\ldots,n)\in \mathbb{Z}_+^{r}$.
Then for every $\mathbf{n}:=(n_1, n_2 \dotsc,n_{r})\in\mathbb{Z}_+^{r}$,
there exists $k_0\in\mathbb{N}$ such that $k_0\mathbf{a}+\mathbf{n}-\mathbf{n_0}\in \mathbb{Z}_+^{r}$.
Since $F\in \mathcal{F}[\mathbf{a}]$,
there exists $k_1\in\mathbb{N}$ such that
$k_1\mathbf{a}+(k_0\mathbf{a}+\mathbf{n}-\mathbf{n_0})\in F^r$ and so with $k=k_0+k_1$,
$k\mathbf{a}+\mathbf{n}\in F^r+\mathbf{n_0}$.
Thus $F+n\in \mathcal{F}[\mathbf{a}]$. Similarly, we can show $F-n\in \mathcal{F}[\mathbf{a}]$ .
\end{proof}

\begin{lem}
For every $r\geq 2$ and $\mathbf{a}=(a_1,a_2,\dotsc,a_r)\in \mathbb{N}^r$,
$2\mathbb{N}\not\in \mathcal{F}[\mathbf{a}]$.
\end{lem}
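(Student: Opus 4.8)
The plan is to unwind the definition of $\mathcal{F}[\mathbf{a}]$ and exhibit one bad choice of shift vector. To prove $2\mathbb{N}\notin\mathcal{F}[\mathbf{a}]$ it suffices to produce a single $\mathbf{n}=(n_1,n_2,\dotsc,n_r)\in\mathbb{Z}_+^r$ such that no $k\in\mathbb{N}$ has $k\mathbf{a}+\mathbf{n}\in(2\mathbb{N})^r$; equivalently, such that for every $k\in\mathbb{N}$ at least one coordinate $ka_i+n_i$ is odd. Since $k\geq 1$, $a_i\geq 1$ and $n_i\geq 0$ already force $ka_i+n_i\geq 1$, membership of $ka_i+n_i$ in $2\mathbb{N}$ is the same as $ka_i+n_i$ being even, so the whole argument reduces to parity bookkeeping in which only $k\bmod 2$ and the residues $a_i\bmod 2$ matter.

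I would then split into two cases. \emph{Case 1:} some coordinate $a_{i_0}$ is even. Put $n_{i_0}=1$ and $n_i=0$ for $i\neq i_0$; then $ka_{i_0}+n_{i_0}=ka_{i_0}+1$ is odd for every $k\in\mathbb{N}$, so this $\mathbf{n}$ does the job. \emph{Case 2:} every $a_i$ is odd. This is where the hypothesis $r\geq 2$ enters. Put $n_1=0$, $n_2=1$, and $n_i=0$ for $i\geq 3$. If some $k\in\mathbb{N}$ satisfied $k\mathbf{a}+\mathbf{n}\in(2\mathbb{N})^r$, then $ka_1=ka_1+n_1$ even would force $k$ to be even (as $a_1$ is odd), while $ka_2+1=ka_2+n_2$ even would force $ka_2$ to be odd, hence $k$ to be odd (as $a_2$ is odd), a contradiction; so again no such $k$ exists. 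In either case $2\mathbb{N}\notin\mathcal{F}[\mathbf{a}]$.

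I do not expect a genuine obstacle here; the only points requiring a little care are the reduction ``$ka_i+n_i\in 2\mathbb{N}\iff ka_i+n_i$ even'' noted above, and the observation that $r\geq 2$ is really needed — indeed for $r=1$ with $a_1$ odd one has $2\mathbb{N}\in\mathcal{F}[(a_1)]$, since given any $n_1\in\mathbb{Z}_+$ one can choose $k\in\mathbb{N}$ with $k\equiv n_1\pmod 2$, making $ka_1+n_1$ even. Alternatively one can argue conceptually in one stroke: as $k$ runs over $\mathbb{N}$, the parity vector of $k\mathbf{a}$ attains only the two values $(0,\dotsc,0)$ and $(a_1\bmod 2,\dotsc,a_r\bmod 2)$, whereas $\{0,1\}^r$ has $2^r\geq 4$ elements when $r\geq 2$, so some target parity pattern is unreachable; but the explicit two-case choice above is the cleanest to write out in full.
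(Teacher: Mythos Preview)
Your proof is correct and follows essentially the same approach as the paper: both split into the two parity cases (all $a_i$ odd versus at least one $a_i$ even) and exhibit a single $\mathbf{n}$ for which no $k$ can make every coordinate even. The only cosmetic differences are the specific witnesses chosen --- the paper uses $\mathbf{n}=(1,2,\dotsc,r)$ in the all-odd case and $\mathbf{n}=(1,1,\dotsc,1)$ in the some-even case --- and your added remark explaining why $r\geq 2$ is genuinely necessary, which the paper omits.
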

\begin{proof}
Case 1: $a_1,a_2,\dotsc,a_r$ are odd. Pick $\mathbf{n}=(1,2,\dotsc,r)\in\mathbb{Z}_+^r$. For every $k\in\mathbb{N}$,
one of $ka_1+1$ and $ka_2+2$ must be odd. Then $2\mathbb{N}\not\in \mathcal{F}[\mathbf{a}]$.

Case 2: there is at least one even integer in $a_1,a_2,\dotsc,a_r$.
Pick $\mathbf{n}=(1,1,\dotsc,1)\in\mathbb{Z}_+^r$. For every $k\in\mathbb{N}$,
$k\mathbf{a}+\mathbf{n}$ contains some odd component. Then $2\mathbb{N}\not\in \mathcal{F}[\mathbf{a}]$.
\end{proof}

\begin{thm}\label{lem:trans-vetor-equ-condi}
Let $(X,f)$ be a dynamical system and $\mathbf{a}=(a_1,a_2,\dotsc,a_r)\in \mathbb{N}^r$.
Then the following conditions are equivalent.
\begin{enumerate}
  \item $(X,f)$ is $\mathbf{a}$-transitive;
  \item $(X,f)$ is $\mathcal{F}[\mathbf{a}]$-transitive;
  \item $(X,f)$ is transitive and  $\mathcal{F}[\mathbf{a}]$-central.
\end{enumerate}
\end{thm}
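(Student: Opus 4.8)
The plan is to deduce the theorem from the pieces already assembled, so the argument will be short. The implication $(1)\Leftrightarrow(2)$ is precisely Theorem~\ref{thm:trans-vetor-equ-condi} and requires nothing new, so the only work is to connect $(2)$ with $(3)$. The natural tool is Lemma~\ref{lem:F-trans-equ-F-center-trans}, which asserts that for a proper, translation $+$ invariant family $\mathcal{F}\subset\mathcal{F}_{inf}$ a system is $\mathcal{F}$-transitive if and only if it is transitive and $\mathcal{F}$-central. So I would simply verify that $\mathcal{F}=\mathcal{F}[\mathbf{a}]$ meets the hypotheses of that lemma and then invoke it.

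Two of the three hypotheses are already available: Lemma~\ref{lem:F[a]-invariant} gives that $\mathcal{F}[\mathbf{a}]$ is a proper translation $\pm$ invariant family, hence in particular translation $+$ invariant. The remaining point is the inclusion $\mathcal{F}[\mathbf{a}]\subset\mathcal{F}_{inf}$, i.e.\ that every member of $\mathcal{F}[\mathbf{a}]$ is infinite, and this I would check directly from the definition: given $F\in\mathcal{F}[\mathbf{a}]$ and any $j\in\mathbb{N}$, applying the defining property to the vector $\mathbf{n}=(j,j,\dotsc,j)\in\mathbb{Z}_+^r$ yields $k\in\mathbb{N}$ with $k\mathbf{a}+\mathbf{n}\in F^r$; in particular $ka_1+j\in F$ and $ka_1+j\geq j+1$, so $F$ meets $[j,+\infty)$ for every $j$ and is therefore infinite.

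Once these checks are in place, Lemma~\ref{lem:F-trans-equ-F-center-trans} applied with $\mathcal{F}=\mathcal{F}[\mathbf{a}]$ gives $(2)\Leftrightarrow(3)$, and together with Theorem~\ref{thm:trans-vetor-equ-condi} this closes the cycle $(1)\Leftrightarrow(2)\Leftrightarrow(3)$. I do not expect a genuine obstacle here: the theorem is essentially a corollary packaging the characterization of $\mathbf{a}$-transitivity via $\mathcal{F}[\mathbf{a}]$-transitivity together with the abstract family-theoretic equivalence of Lemma~\ref{lem:F-trans-equ-F-center-trans}. The only minor subtlety is that Lemma~\ref{lem:F-trans-equ-F-center-trans} requires $\mathcal{F}\subset\mathcal{F}_{inf}$, which is exactly why the short unboundedness verification above is needed; the rest is bookkeeping with the definitions.
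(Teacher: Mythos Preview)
Your proof is correct and follows exactly the same route as the paper: $(1)\Leftrightarrow(2)$ by Theorem~\ref{thm:trans-vetor-equ-condi}, and $(2)\Leftrightarrow(3)$ by combining Lemma~\ref{lem:F-trans-equ-F-center-trans} with Lemma~\ref{lem:F[a]-invariant}. In fact you are slightly more careful than the paper, which silently assumes $\mathcal{F}[\mathbf{a}]\subset\mathcal{F}_{inf}$; your direct verification of this inclusion fills a small gap in the paper's argument.
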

 \begin{proof}
(1) $\Leftrightarrow (2)$ follows from Theorem~\ref{thm:trans-vetor-equ-condi}.

(2) $\Leftrightarrow$ (3) follows from Lemmas~\ref{lem:F-trans-equ-F-center-trans} and~\ref{lem:F[a]-invariant}.
\end{proof}

Now applying results in section 3,
we show that multi-transitivity can be also characterized by $\mathcal {F}$-point transitivity.
\begin{thm}\label{thm:trans-vector-equ-F-point-trans}
Let $(X,f)$ be a dynamical system and $\mathbf{a}=(a_1,a_2,\dotsc,a_r)\in \mathbb{N}^r$.
Then the following conditions are equivalent.
\begin{enumerate}
  \item $(X,f)$ is $\mathbf{a}$-transitive;
  \item $(X,f)$ is $\nabla(\mathcal{F}[\mathbf{a}])$-point transitive;
  \item $Trans(X,f)=Trans_{\nabla(\mathcal{F}[\mathbf{a}])}(X,f)\neq\emptyset$.
\end{enumerate}
\end{thm}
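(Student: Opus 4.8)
The plan is to deduce Theorem~\ref{thm:trans-vector-equ-F-point-trans} from the machinery assembled in section~3, in exactly the same way that Theorem~\ref{thm:weak-str-mix-nablaF-poin-trans} was obtained from Lemma~\ref{lem:weak-mixing} and Proposition~\ref{lem:F-trans-equi-nablaF-trans}. The key observation is that all the ingredients are already in place: by Lemma~\ref{lem:F[a]-invariant}, the family $\mathcal{F}[\mathbf{a}]$ is a proper translation $\pm$ invariant family, and in particular $\mathcal{F}[\mathbf{a}]\subset\mathcal{F}_{inf}$ (any $F\in\mathcal{F}[\mathbf{a}]$ must be infinite, since for arbitrarily large $\mathbf n$ one finds $k\mathbf a+\mathbf n\in F^r$). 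Hence $\mathcal{F}[\mathbf{a}]$ satisfies precisely the hypotheses needed to invoke Proposition~\ref{lem:F-trans-equi-nablaF-trans}.

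The steps, in order, are as follows. First, by Theorem~\ref{thm:trans-vetor-equ-condi} (equivalently, the equivalence (1)$\Leftrightarrow$(2) of Theorem~\ref{lem:trans-vetor-equ-condi}), $(X,f)$ is $\mathbf{a}$-transitive if and only if it is $\mathcal{F}[\mathbf{a}]$-transitive. Second, applying Proposition~\ref{lem:F-trans-equi-nablaF-trans} with $\mathcal{F}=\mathcal{F}[\mathbf{a}]$ — legitimate because of Lemma~\ref{lem:F[a]-invariant} and the remark above that $\mathcal{F}[\mathbf{a}]\subset\mathcal{F}_{inf}$ — we get that $(X,f)$ is $\mathcal{F}[\mathbf{a}]$-transitive if and only if it is $\nabla(\mathcal{F}[\mathbf{a}])$-point transitive. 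Chaining these two equivalences yields (1)$\Leftrightarrow$(2). For the refinement (3), I would invoke Proposition~\ref{prop:tans-F-center-equi-nablaF-trans}: applied with $\mathcal{F}=\mathcal{F}[\mathbf{a}]$, it gives that $(X,f)$ is transitive and $\mathcal{F}[\mathbf{a}]$-central if and only if $Trans_{\nabla(\mathcal{F}[\mathbf{a}])}(X,f)=Trans(X,f)\neq\emptyset$ if and only if $(X,f)$ is $\nabla(\mathcal{F}[\mathbf{a}])$-point transitive. Combining with the equivalence (1)$\Leftrightarrow$(3) of Theorem~\ref{lem:trans-vetor-equ-condi} (which says $\mathbf{a}$-transitivity is the same as transitivity plus $\mathcal{F}[\mathbf{a}]$-centrality), all three conditions (1), (2), (3) of the present theorem are seen to be equivalent.

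I do not expect a genuine obstacle here; the theorem is essentially a bookkeeping assembly of earlier results, and the only point requiring a word of care is the verification that $\mathcal{F}[\mathbf{a}]$ meets the standing hypothesis "$\mathcal{F}\subset\mathcal{F}_{inf}$ proper translation $+$ invariant" demanded by Lemma~\ref{lem:F-trans-equ-F-center-trans} and Proposition~\ref{lem:F-trans-equi-nablaF-trans}. That $\mathcal{F}[\mathbf{a}]$ is proper and translation $\pm$ invariant is exactly Lemma~\ref{lem:F[a]-invariant}; that every member is infinite is immediate from the defining condition (fix any sequence $\mathbf{n}^{(j)}$ with unboundedly growing entries and extract witnesses $k_j$, forcing $F$ to contain arbitrarily large integers). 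Once this is noted, the proof is a two-line citation chain. For clarity I would present it as: "(1)$\Leftrightarrow$(2) follows from Theorem~\ref{thm:trans-vetor-equ-condi} together with Proposition~\ref{lem:F-trans-equi-nablaF-trans} and Lemma~\ref{lem:F[a]-invariant}; (1)$\Leftrightarrow$(3) follows from Theorem~\ref{lem:trans-vetor-equ-condi} and Proposition~\ref{prop:tans-F-center-equi-nablaF-trans}."
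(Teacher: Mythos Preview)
Your proposal is correct and takes essentially the same approach as the paper: both arguments assemble Theorem~\ref{lem:trans-vetor-equ-condi}, Proposition~\ref{prop:tans-F-center-equi-nablaF-trans}, Proposition~\ref{lem:F-trans-equi-nablaF-trans}, and Lemma~\ref{lem:F[a]-invariant} into a short citation chain. The only cosmetic difference is that the paper argues in a $(1)\Rightarrow(2)\Rightarrow(3)\Rightarrow(1)$ cycle while you establish pairwise equivalences, and you take the extra (welcome) care of spelling out why $\mathcal{F}[\mathbf{a}]\subset\mathcal{F}_{inf}$.
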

\begin{proof}
(1) $\Rightarrow$ (2) follows from Theorem~\ref{lem:trans-vetor-equ-condi} and
Proposition~\ref{prop:tans-F-center-equi-nablaF-trans}.

(2) $\Rightarrow$ (3) follows from Proposition~\ref{prop:tans-F-center-equi-nablaF-trans}.

(3) $\Rightarrow$ (1)
By Proposition~\ref{prop:tans-F-center-equi-nablaF-trans},
$(X,f)$ is transitive and  $\mathcal{F}[\mathbf{a}]$-central.
Then by Lemmas~\ref{lem:F-trans-equ-F-center-trans} and ~\ref{lem:F[a]-invariant},
$(X,f)$ is $\mathcal{F}[\mathbf{a}]$-transitive.
Therefore by Theorem~\ref{lem:trans-vetor-equ-condi}, $(X,f)$ is $\mathbf{a}$-transitive.
\end{proof}

Let $\mathcal{F}[\infty]=\bigcap\limits_{i=1}^\infty \mathcal{F}[\mathbf{a}_i]$,
where $\mathbf{a}_i=(1,2,\dotsc,i)$ for $i\in\mathbb{N}$.
Using the family $\mathcal{F}[\infty]$, we have the following characterization of multi-transitivity and
strong multi-transitivity.
\begin{thm}[\cite{Chzhj-Ljian-Lj}]\label{thm:multi-trans-equ-condi-infty}
Let $(X,f)$ be a dynamical system. Then
\begin{enumerate}
  \item $(X,f)$ is multi-transitive if and only if it is $\mathcal{F}[\infty]$-transitive;
  \item $(X,f)$ is strongly multi-transitive if and only if it is $\mathcal{F}[\infty]$-mixing.
\end{enumerate}
\end{thm}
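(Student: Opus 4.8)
The plan is to reduce both parts to the results of Section~3 and to Theorem~\ref{lem:trans-vetor-equ-condi}, via the identity $\mathcal{F}[\infty]=\bigcap_{m\in\mathbb{N}}\mathcal{F}[\mathbf{a}_m]$ with $\mathbf{a}_m=(1,2,\dotsc,m)$. Part~(1) should be purely formal. By definition $(X,f)$ is multi-transitive exactly when it is $\mathbf{a}_m$-transitive for every $m\in\mathbb{N}$, and by Theorem~\ref{lem:trans-vetor-equ-condi} this holds exactly when $(X,f)$ is $\mathcal{F}[\mathbf{a}_m]$-transitive for every $m$, that is, exactly when $N(U,V)\in\mathcal{F}[\mathbf{a}_m]$ for every pair of non-empty open sets $U,V$ of $X$ and every $m$. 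Since the universal quantifiers over the pair $U,V$ and over $m$ commute, this is precisely the statement that $N(U,V)\in\bigcap_m\mathcal{F}[\mathbf{a}_m]=\mathcal{F}[\infty]$ for all non-empty open $U,V$; equivalently, $(X,f)$ is $\mathcal{F}[\infty]$-transitive. No real obstacle arises here.

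For part~(2), the first step is to combine Lemma~\ref{lem:weak-mixing}(3) with part~(1): $(X,f)$ is $\mathcal{F}[\infty]$-mixing if and only if it is $\mathcal{F}[\infty]$-transitive and weakly mixing, hence if and only if it is multi-transitive and weakly mixing. Thus part~(2) reduces to proving that $(X,f)$ is strongly multi-transitive if and only if it is multi-transitive and weakly mixing. One implication is immediate, since multi-transitivity is the special case $\mathbf{a}=\mathbf{a}_m$ and weak mixing is the special case $\mathbf{a}=(1,1)$ (as $f^{(1,1)}=f\times f$) of strong multi-transitivity.

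For the converse I would use a doubling induction. Assume $(X,f)$ is multi-transitive and weakly mixing, and fix an arbitrary vector $\mathbf{a}=(a_1,a_2,\dotsc,a_r)\in\mathbb{N}^r$; the goal is transitivity of $(X^r,f^{(\mathbf{a})})$. Weak mixing of $(X,f)$ implies weak mixing of every finite self-product $(X^n,f\times\dotsb\times f)$ (Furstenberg~\cite{Furstenberg-1967}), so in particular each $(X^{2^t},f\times\dotsb\times f)$ is weakly mixing. I would then prove by induction on $t\ge 0$ that $(X^{2^t},f\times\dotsb\times f)$ is multi-transitive: the case $t=0$ is the hypothesis; and if $(X^{2^t},f\times\dotsb\times f)$ is multi-transitive, then it is $\mathcal{F}[\infty]$-transitive by part~(1), hence, being also weakly mixing, $\mathcal{F}[\infty]$-mixing by Lemma~\ref{lem:weak-mixing}(3), so its square $(X^{2^{t+1}},f\times\dotsb\times f)$ is $\mathcal{F}[\infty]$-transitive and therefore multi-transitive, again by part~(1). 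Unwinding the multi-transitivity of $(X^{2^t},f\times\dotsb\times f)$ shows that for every $m$ the product of $f,f^2,\dotsc,f^m$, with each $f^\ell$ repeated $2^t$ times, acting on $X^{2^t m}$, is transitive. Finally, given $\mathbf{a}$, put $N=\max_i a_i$ and choose $t$ so that $2^t$ is at least the largest number of indices $i$ carrying a common value $a_i$; then $(X^r,f^{(\mathbf{a})})$ is a factor of that transitive system with $m=N$, via the coordinate projection that sends the $i$-th factor of $X^r$ onto one of the $2^t$ copies of $f^{a_i}$, distinct copies being used for indices $i$ with equal $a_i$. A factor of a transitive system is transitive, so $(X,f)$ is $\mathbf{a}$-transitive; since $\mathbf{a}$ was arbitrary, $(X,f)$ is strongly multi-transitive.

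The main obstacle is exactly this converse. Multi-transitivity only furnishes transitivity of the systems $(X^m,f\times f^2\times\dotsb\times f^m)$, whose defining vectors have strictly increasing entries, while strong multi-transitivity demands it for arbitrary vectors, and a vector with a repeated entry is not a factor of any $(1,2,\dotsc,m)$. The doubling induction gets around this because $\mathcal{F}[\infty]$-mixing is inherited by self-products --- equivalently, the class of multi-transitive weakly mixing systems is closed under passing from $(X,f)$ to $(X^{2^t},f\times\dotsb\times f)$ --- and once that is available, the coordinate bookkeeping in the final factor step, although slightly delicate, is routine.
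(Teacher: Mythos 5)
Your argument is correct, but note that the paper itself offers no proof of Theorem~\ref{thm:multi-trans-equ-condi-infty}: it is imported verbatim from \cite{Chzhj-Ljian-Lj}, so there is no in-text proof to compare against. What you have written is a self-contained derivation modulo the same two ingredients the paper also takes as given, namely Theorem~\ref{thm:trans-vetor-equ-condi} ($\mathbf{a}$-transitive $\Leftrightarrow$ $\mathcal{F}[\mathbf{a}]$-transitive) and Lemma~\ref{lem:weak-mixing}(3). Part (1) is indeed the purely formal intersection argument, and it is exactly how the paper handles the analogous step in Theorem~\ref{thm:multrans-nablaF(infty)-poin-trans-equi}. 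For part (2), your reduction via Lemma~\ref{lem:weak-mixing}(3) to ``strongly multi-transitive $\Leftrightarrow$ multi-transitive and weakly mixing'' mirrors the paper's Theorem~\ref{cor:str-multi-equi}; the one substantive addition is your doubling induction together with the coordinate-projection factor argument, which proves the implication ``multi-transitive and weakly mixing $\Rightarrow$ strongly multi-transitive'' that the paper only obtains by citing Theorem~\ref{thm:multi-trans-equ-condi-infty} in the step (3)$\Rightarrow$(1) of Theorem~\ref{cor:str-multi-equi}. That induction is sound: weak mixing passes to all finite self-products, $\mathcal{F}[\infty]$-transitivity of $(X^{2^t},f^{\times 2^t})$ combined with weak mixing yields $\mathcal{F}[\infty]$-transitivity of $(X^{2^{t+1}},f^{\times 2^{t+1}})$, and the final projection onto $r$ distinct coordinates (distinct copies for repeated values $a_i$) is a genuine factor map, so transitivity descends. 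You correctly avoid the circularity that would arise from invoking Theorem~\ref{cor:str-multi-equi} itself; the only caveat worth recording is that your proof, like the paper's, ultimately rests on the unproved-here Theorem~\ref{thm:trans-vetor-equ-condi} from the companion paper.
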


By Lemma~\ref{lem:F[a]-invariant},
we have that $\mathcal{F}[\infty]$ is also transition $\pm$ invariant.
Then we obtain a new characterization of multi-transitivity.

\begin{thm}\label{thm:multrans-nablaF(infty)-poin-trans-equi}
Let $(X,f)$ be a dynamical system. Then the following conditions are equivalent.
\begin{enumerate}
\item $(X,f)$ is multi-transitive;
\item $(X,f)$ is $\mathcal{F}[\infty]$-transitive;
\item $(X,f)$ is $\nabla(\mathcal{F}[\infty])$-point transitive.
\end{enumerate}
\end{thm}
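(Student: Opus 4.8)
The plan is to derive Theorem~\ref{thm:multrans-nablaF(infty)-poin-trans-equi} by stitching together the two characterizations already in hand. The equivalence (1) $\Leftrightarrow$ (2) is precisely part (1) of Theorem~\ref{thm:multi-trans-equ-condi-infty}, so there is nothing new to do there; the content of the present theorem is the equivalence (2) $\Leftrightarrow$ (3), which should follow from the machinery of Section~3 in exactly the same way that Theorem~\ref{thm:trans-vector-equ-F-point-trans} followed from Theorem~\ref{lem:trans-vetor-equ-condi}. Concretely, I would invoke Proposition~\ref{lem:F-trans-equi-nablaF-trans}, which says that for a \emph{proper translation $+$ invariant} family $\mathcal{G}$, a system is $\mathcal{G}$-transitive if and only if it is $\nabla(\mathcal{G})$-point transitive. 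Applying this with $\mathcal{G}=\mathcal{F}[\infty]$ converts statement (2) directly into statement (3).

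The one genuine gap to fill is the hypothesis of Proposition~\ref{lem:F-trans-equi-nablaF-trans}: I must check that $\mathcal{F}[\infty]$ is a proper translation $+$ invariant family contained in $\mathcal{F}_{inf}$. By Lemma~\ref{lem:F[a]-invariant}, each $\mathcal{F}[\mathbf{a}_i]$ is a proper translation $\pm$ invariant family, and an intersection of translation $\pm$ invariant families is again translation $\pm$ invariant, so $\mathcal{F}[\infty]=\bigcap_{i\ge1}\mathcal{F}[\mathbf{a}_i]$ is translation $\pm$ invariant (in particular translation $+$ invariant); this is already remarked in the paragraph preceding the theorem. Properness is immediate since $\mathbb{N}\in\mathcal{F}[\mathbf{a}_i]$ for every $i$ (so $\mathbb{N}\in\mathcal{F}[\infty]$) while $\emptyset\notin\mathcal{F}[\mathbf{a}_1]$ (so $\emptyset\notin\mathcal{F}[\infty]$). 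Finally $\mathcal{F}[\infty]\subset\mathcal{F}[\mathbf{a}_1]=\mathcal{F}[(1)]\subset\mathcal{F}_{inf}$: indeed, if $F\in\mathcal{F}[(1)]$ then for every $n\in\mathbb{Z}_+$ there is $k\in\mathbb{N}$ with $k+n\in F$, which forces $F$ to be infinite. Hence all hypotheses of Proposition~\ref{lem:F-trans-equi-nablaF-trans} are met.

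I would write the proof itself in three short lines: (1) $\Leftrightarrow$ (2) is Theorem~\ref{thm:multi-trans-equ-condi-infty}(1); $\mathcal{F}[\infty]$ is a proper translation $+$ invariant subfamily of $\mathcal{F}_{inf}$ by Lemma~\ref{lem:F[a]-invariant} and the remarks above; therefore (2) $\Leftrightarrow$ (3) by Proposition~\ref{lem:F-trans-equi-nablaF-trans}. If one wants the sharper conclusion analogous to Theorem~\ref{thm:trans-vector-equ-F-point-trans}(3), namely $Trans_{\nabla(\mathcal{F}[\infty])}(X,f)=Trans(X,f)\neq\emptyset$, that comes for free by chaining Theorem~\ref{lem:trans-vetor-equ-condi}-style reasoning through Lemma~\ref{lem:F-trans-equ-F-center-trans} and Proposition~\ref{prop:tans-F-center-equi-nablaF-trans}, but it is not needed for the stated three-way equivalence.

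The main obstacle, such as it is, is purely bookkeeping: making sure the translation-invariance of the infinite intersection is correctly attributed (it follows from Lemma~\ref{lem:F[a]-invariant} applied termwise, since if $F\pm n\in\mathcal{F}[\mathbf{a}_i]$ for each $i$ then $F\pm n\in\mathcal{F}[\infty]$) and that $\mathcal{F}[\infty]\subset\mathcal{F}_{inf}$, which the earlier text uses implicitly but does not state as a separate lemma. There is no delicate dynamics here; once the family-theoretic hypotheses are verified, the theorem is an immediate corollary of Section~3 together with the results of~\cite{Chzhj-Ljian-Lj}.
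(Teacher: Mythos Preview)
Your proposal is correct and follows essentially the same route as the paper: (1) $\Leftrightarrow$ (2) via the known characterization of multi-transitivity (the paper cites Theorem~\ref{lem:trans-vetor-equ-condi} plus the definition, you cite the packaged form Theorem~\ref{thm:multi-trans-equ-condi-infty}(1)), and (2) $\Leftrightarrow$ (3) via Proposition~\ref{lem:F-trans-equi-nablaF-trans} after noting that $\mathcal{F}[\infty]$ inherits translation $+$ invariance from Lemma~\ref{lem:F[a]-invariant}. Your explicit verification that $\mathcal{F}[\infty]$ is proper and contained in $\mathcal{F}_{inf}$ is a bit more careful than the paper's one-line appeal, but the argument is otherwise identical.
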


\begin{proof}
(1) $\Leftrightarrow$ (2) follows from the definitions 
and Theorem~\ref{thm:multi-trans-equ-condi-infty}.

(2) $\Leftrightarrow $ (3) folows from Proposition~\ref{lem:F-trans-equi-nablaF-trans}
and the fact that $\mathcal{F}[\infty]$ is transition $+$ invariant.
\end{proof}

Let $\mathcal{F}_{smt}=\nabla(\mathcal{F}[\infty]\cap\mathcal{F}_t)$.
Then we obtain the following characterization of strongly multi-transitivity.

\begin{thm}\label{cor:str-multi-equi}
Let $(X,f)$ be a dynamical system. Then the following conditions are equivalent.
\begin{enumerate}
\item $(X,f)$ is strongly multi-transitive;
\item $(X,f)$ is weakly mixing and multi-transitive;
\item $(X,f)$ is $\mathcal{F}[\infty]$-mixing;
\item $(X,f)$ is $\mathcal{F}_{smt}$-point transitive.
\end{enumerate}
\end{thm}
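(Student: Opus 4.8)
The plan is to prove the four conditions equivalent by a cycle, leaning heavily on the machinery already assembled. For the implication $(1)\Leftrightarrow(2)$ I would first observe that strong multi-transitivity of $(X,f)$ means $(X,f)$ is $\mathbf{a}$-transitive for \emph{every} vector $\mathbf{a}\in\mathbb{N}^r$ and every $r$; in particular it is multi-transitive (take $\mathbf{a}=(1,2,\dotsc,n)$), and taking $\mathbf{a}=(k,k,\dotsc,k)$ together with Theorem~\ref{lem:trans-vetor-equ-condi} shows $(X,f^k)$ is transitive for all $k$, i.e.\ $(X,f)$ is totally transitive. More to the point, $(X\times X,f\times f)$ is $\mathbf{a}$-transitive whenever $(X^{2r},f^{(\mathbf{a})}\times f^{(\mathbf{a})})=((X\times X)^r,(f\times f)^{(\mathbf{a})})$ is transitive, which follows from strong multi-transitivity of $(X,f)$ applied to the doubled vector; so strong multi-transitivity passes to $X\times X$, giving weak mixing. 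Conversely, if $(X,f)$ is weakly mixing and multi-transitive, then by the standard fact that weak mixing plus transitivity of each iterate system upgrades (Furstenberg's theorem, via $\mathcal{F}_t$-transitivity in Lemma~\ref{lem:weak-mixing}) one can realize any vector $\mathbf{a}$ as a ``sub-vector'' of $(1,2,\dotsc,n)$-type data after passing to products; the cleanest route is to cite Theorem~\ref{thm:multi-trans-equ-condi-infty}(2) directly, so I would simply reduce $(1)\Leftrightarrow(3)$ to it.

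For $(2)\Leftrightarrow(3)$, I would invoke Theorem~\ref{thm:multi-trans-equ-condi-infty}(2): $(X,f)$ is strongly multi-transitive if and only if it is $\mathcal{F}[\infty]$-mixing. This is the substantive input and requires no new argument. So the only genuinely new content is $(3)\Leftrightarrow(4)$, and this is where the reverse difference family enters. By definition $\mathcal{F}[\infty]\cap\mathcal{F}_t$ is again a family, and it is proper (it contains $\mathbb{N}$ and not $\emptyset$) and translation $+$ invariant, being an intersection of two translation $+$ invariant families (Lemma~\ref{lem:F[a]-invariant} for $\mathcal{F}[\infty]$, and $\mathcal{F}_t$ is noted to be translation $\pm$ invariant in Section~2). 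Now $\mathcal{F}[\infty]$-mixing means, by Lemma~\ref{lem:weak-mixing}(3), that $(X,f)$ is weakly mixing (i.e.\ $\mathcal{F}_t$-transitive) and $\mathcal{F}[\infty]$-transitive simultaneously; since the conjunction of $\mathcal{F}_1$-transitivity and $\mathcal{F}_2$-transitivity is exactly $(\mathcal{F}_1\cap\mathcal{F}_2)$-transitivity (the hitting time set lies in both iff it lies in the intersection), this says precisely that $(X,f)$ is $(\mathcal{F}[\infty]\cap\mathcal{F}_t)$-transitive. Then Proposition~\ref{lem:F-trans-equi-nablaF-trans}, applied to the proper translation $+$ invariant family $\mathcal{F}[\infty]\cap\mathcal{F}_t\subset\mathcal{F}_{inf}$, gives that this is equivalent to $\nabla(\mathcal{F}[\infty]\cap\mathcal{F}_t)$-point transitivity, i.e.\ to $\mathcal{F}_{smt}$-point transitivity.

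The main obstacle is a bookkeeping point rather than a deep one: I must check that $\mathcal{F}[\infty]\cap\mathcal{F}_t$ is contained in $\mathcal{F}_{inf}$ (so that Proposition~\ref{lem:F-trans-equi-nablaF-trans} applies), which is immediate since every thick set is infinite, and that it is genuinely translation $+$ invariant and proper. The other point needing a line of care is the elementary but essential observation that ``$\mathcal{F}_1$-transitive and $\mathcal{F}_2$-transitive'' $\iff$ ``$(\mathcal{F}_1\cap\mathcal{F}_2)$-transitive'', which is where the conjunction in $\mathcal{F}[\infty]$-mixing gets folded into a single family. Everything else is a citation. I would therefore write the proof as: $(1)\Leftrightarrow(2)$ is the definition of strong multi-transitivity combined with the discussion above (or a direct appeal to known equivalences), $(1)\Leftrightarrow(3)$ is Theorem~\ref{thm:multi-trans-equ-condi-infty}(2), and $(3)\Leftrightarrow(4)$ is Lemma~\ref{lem:weak-mixing}(3) plus the intersection remark plus Proposition~\ref{lem:F-trans-equi-nablaF-trans}.
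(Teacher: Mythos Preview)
Your proposal is correct and follows essentially the same approach as the paper. The equivalence $(1)\Leftrightarrow(3)$ via Theorem~\ref{thm:multi-trans-equ-condi-infty}(2) and the link to $(4)$ through $\nabla$ applied to $\mathcal{F}[\infty]\cap\mathcal{F}_t$ are exactly the paper's ingredients; the only organizational difference is that the paper argues $(2)\Rightarrow(3)$ by a direct Furstenberg-intersection computation and links $(4)$ to $(2)$ (citing Theorems~\ref{thm:weak-str-mix-nablaF-poin-trans} and~\ref{thm:multrans-nablaF(infty)-poin-trans-equi} separately, then using $\nabla(\mathcal{F}[\infty])\cap\nabla(\mathcal{F}_t)=\nabla(\mathcal{F}[\infty]\cap\mathcal{F}_t)$), whereas you fold the conjunction at the transitivity level first (via Lemma~\ref{lem:weak-mixing}(3)) and then apply Proposition~\ref{lem:F-trans-equi-nablaF-trans} once to the intersection family---a slightly cleaner route to the same conclusion. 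Your discussion of $(1)\Rightarrow(2)$ is over-engineered (the paper just says ``trivial'': weak mixing is $\mathbf{a}$-transitivity for $\mathbf{a}=(1,1)$), but not wrong.
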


\begin{proof}
(1) $\Rightarrow$ (2) is trivial.

(2) $\Rightarrow$ (3) Let $U_1$, $U_2$, $V_1$ and $V_2$ be non-empty open subsets of $X$.
Since $(X,f)$ is weakly mixing, there exists non-empty open subsets $U$, $V$ of $X$ such that
$N_{f}(U,V)\subset N_{f}(U_1,V_1)\cap N_{f}(U_2,V_2)$.
Since $(X,f)$ is multi-transitive, by Theorem~\ref{thm:multrans-nablaF(infty)-poin-trans-equi},
$N_{f}(U,V)\in \mathcal{F}[\infty]$.
Hence, $N_{f}(U_1,V_1)\cap N_{f}(U_2,V_2)\in \mathcal{F}[\infty]$.

(3) $\Rightarrow$ (1) follows from Theorem~\ref{thm:multi-trans-equ-condi-infty}.

(2) $\Leftrightarrow$ (4) follows from Theorems~\ref{thm:weak-str-mix-nablaF-poin-trans},
\ref{thm:multrans-nablaF(infty)-poin-trans-equi}, and the fact that $\nabla(\mathcal{F}[\infty])\cap\nabla(\mathcal{F}_t)=\nabla(\mathcal{F}[\infty]\cap\mathcal{F}_t)$.
\end{proof}

\section[Delta-transitivity with respect to a vector]{\texorpdfstring{$\Delta$}{Delta}-transitivity with respect to a vector}

In this section, we first discuss some propositions of $\Delta$-transitivity
and then provide a characterization of $\Delta$-transitivity via $\mathcal{F}$-point transitvity.

\begin{de}
Let $(X, f)$ be a dynamical system and $\mathbf{a}=(a_{1}, a_{2}, \dotsc, a_r)$ be a vector in $\mathbb{N}^r$.
We say that the system $(X,f)$ is
\begin{enumerate}
\item \emph{$\Delta$-transitive with respect to the vector $\mathbf{a}$}
(or briefly \emph{$\Delta$-$\mathbf{a}$-transitive})
if there exists a point $x\in X$ such that
$(x,x,\dotsc,x)$ is a transitive point of $(X^r,  f^{(\mathbf{a})})$.
\item \emph{$\Delta$-transitive}
if it is $\Delta$-transitive with respect to $(1,2,\dotsc,n)$ for all $n\in\mathbb N$.
\end{enumerate}
\end{de}

\begin{rem}Let $(X,f)$ be a dynamical system.
If $(X,f)$ is $\Delta$-transitive with respect to $(1,1)$,
then there exists a point $x\in X$ such that $\{(f^n(x), f^n(x)):\ n\in \mathbb{N}\}$ is dense in $X\times X$.
But $\{(f^n(x), f^n(x)):\ n\in \mathbb{N}\}$ is a subset of the diagonal of $X\times X$.
This implies that $X$ must be a singleton.
For this reason, we only discuss the $\Delta$-$\mathbf{a}$-transitive system,
where $\mathbf{a}\in \mathbb{N}^r_*=\{(n_1,n_2,\dotsc,n_r)\in\mathbb{N}^r:\ n_1<n_2<\dotsb<n_r\}$.
\end{rem}

\begin{prop}\label{prop:triangle-trans-equ-condi}
Let $(X,f)$ be a dynamical system and $\mathbf{a}=(a_1,a_2,\dotsc,a_r)\in\mathbb{N}_*^r$.
Then the following conditions are equivalent.
\begin{enumerate}
\item $(X,f)$ is $\Delta$-$\mathbf{a}$-transitive;
\item For every non-empty open subsets $U_0$, $U_1, \dotsc, U_r$ of $X$,
there exists some $n\in \mathbb{N}$ such that
\[U_0\cap\bigcap_{i=1}^{r} f^{-na_i}(U_i)\neq\emptyset;\]
\item $Trans(X^r,f^{(\mathbf{a})})\cap \Delta_{X^r}\neq\emptyset$ is
 a dense $G_\delta$ subset of $\Delta_{X^r}$.
\end{enumerate}
\end{prop}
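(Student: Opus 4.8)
The plan is to establish the cycle $(1)\Rightarrow(3)\Rightarrow(2)\Rightarrow(1)$. Throughout write $g=f^{(\mathbf{a})}$ and identify $\Delta_{X^r}$ with $X$ through the diagonal embedding $x\mapsto(x,\dotsc,x)$, which is a homeomorphism onto $\Delta_{X^r}$. Two elementary facts will be used repeatedly. First, under any of (1)--(3) the system $(X^r,g)$ has a transitive point, so it is transitive, and hence $g$ — and therefore $f$ — is surjective (a transitive system on a compact metric space is onto: if $f(X)\neq X$, then $N(X\setminus f(X),\,X\setminus f(X))=\emptyset$). Second, having fixed a countable base $\{W_j\}_{j}$ of $X^r$, one has $Trans(X^r,g)\cap\Delta_{X^r}=\bigcap_j O_j$, where $O_j=\{x\in X:\ g^n(x,\dotsc,x)\in W_j\text{ for some }n\ge 0\}$ is open in $X$; thus $Trans(X^r,g)\cap\Delta_{X^r}$ is automatically $G_\delta$ in $\Delta_{X^r}$, and, $X$ being a Baire space, it is dense if and only if each $O_j$ is dense.

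The main step is $(1)\Rightarrow(3)$. Suppose $\bar x_0:=(x_0,\dotsc,x_0)$ is a transitive point of $(X^r,g)$. Projecting its $g$-orbit to the first coordinate shows that $\{f^p(x_0):p\ge 0\}$ is dense in $X$. The crucial observation is that $(f^p(x_0),\dotsc,f^p(x_0))$ is again a transitive point of $(X^r,g)$ for every $p\ge 0$, which I would deduce from the identity
\[g^{j}\bigl(f^p(x_0),\dotsc,f^p(x_0)\bigr)=\bigl(f^p\times\dotsb\times f^p\bigr)\bigl(g^{j}(\bar x_0)\bigr)\qquad(j\ge 0),\]
since $f^p\times\dotsb\times f^p$ is a continuous surjection of $X^r$ and hence maps the dense $g$-orbit of $\bar x_0$ onto a dense subset of $X^r$. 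Consequently the dense set $\{(f^p(x_0),\dotsc,f^p(x_0)):p\ge 0\}$ is contained in $Trans(X^r,g)\cap\Delta_{X^r}$, and combined with the $G_\delta$ property above this yields (3).

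The remaining two implications are short. For $(3)\Rightarrow(2)$, given non-empty open sets $U_0,U_1,\dotsc,U_r\subseteq X$, put $W=U_1\times\dotsb\times U_r$; since $g$ is surjective, a tail of a dense $g$-orbit is still dense, so every diagonal transitive point $(x,\dotsc,x)$ satisfies $g^n(x,\dotsc,x)\in W$ for some $n\ge 1$. Hence the open set $\{x\in X:\ g^n(x,\dotsc,x)\in W\text{ for some }n\ge 1\}$ contains $Trans(X^r,g)\cap\Delta_{X^r}$, which is dense by (3), so it meets $U_0$; this is precisely the assertion of (2). For $(2)\Rightarrow(1)$, choosing the base $\{W_j\}$ to consist of products of open subsets of $X$, condition (2) says exactly that every $O_j$ meets every non-empty open subset of $X$, i.e. every $O_j$ is dense; by the Baire property, $Trans(X^r,g)\cap\Delta_{X^r}=\bigcap_j O_j$ is then a dense $G_\delta$ subset of $\Delta_{X^r}$, in particular non-empty, which gives a point realizing (1) (and in fact re-proves (3)).

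I expect the only genuine obstacle to be the crucial observation in $(1)\Rightarrow(3)$. The difficulty is that $\Delta_{X^r}$ is not $g$-invariant, so a transitive point of $(X^r,g)$ lying off the diagonal carries no information about $\Delta_{X^r}$, and one cannot simply intersect the dense $G_\delta$ set of all transitive points of $(X^r,g)$ with the closed set $\Delta_{X^r}$ and expect density. What makes the argument work is that the \emph{forward $f$-orbit} of a diagonal transitive point stays inside the set of diagonal transitive points, and this uses the surjectivity of $f$ in an essential way. I also note that the hypothesis $\mathbf{a}\in\mathbb{N}_*^r$ is not actually needed for this equivalence; it is imposed only so that $\Delta$-$\mathbf{a}$-transitivity is not vacuous.
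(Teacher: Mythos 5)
Your proof is correct and uses essentially the same two ingredients as the paper's: the commutation of $f^{(\mathbf{a})}$ with the diagonal action of $f$ (together with surjectivity of $f$, forced by transitivity on a compact space) to show that forward $f$-images of a diagonal transitive point remain diagonal transitive points, and the Baire category argument over a countable base of boxes. The only difference is cosmetic — you run the cycle $(1)\Rightarrow(3)\Rightarrow(2)\Rightarrow(1)$ and place the commutation trick inside $(1)\Rightarrow(3)$, whereas the paper runs $(1)\Rightarrow(2)\Rightarrow(3)\Rightarrow(1)$ and uses it in $(1)\Rightarrow(2)$.
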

\begin{proof}
(1) $\Rightarrow$ (2) Let $U_0, U_1, \dotsc, U_r$ be non-empty open subsets of $X$
and $(x,x,\dotsc,x)$ be a transitive point of $(X^r,f^{(\mathbf{a})})$.
Choose $k\in \mathbb{N}$ such that $y=f^{k}(x)\in U_0$.
As $f^{a_1}\times f^{a_2}\times\dotsb \times f^{a_r}$ commutes with $f\times f\times\dotsb \times f$,
we have $(y,y,\dotsc,y)$ is also a transitive point of $(X^r,f^{(\mathbf{a})})$.
Then there exists some $n\in \mathbb{N}$ such that
$(f^{na_1}(y), f^{na_2}(y),\dotsc,$ $f^{na_r}(y))\in U_1\times U_2\times\dotsb \times U_r$,
that is
\[y\in U_0\cap\bigcap_{i=1}^{r} f^{-na_i}(U_i).\]

(2) $\Rightarrow$ (3) Let $\{B_k: k\in \mathbb{N}\}$ be a countable base of open balls of $X$.
Put
$$Y=\bigcap_{(k_1,k_2,\dotsc,k_r)\in \mathbb{N}^r}\bigcup_{n\in \mathbb{N}} \bigcap _{i=1}^{r}f^{-na_i}(B_{k_i}) $$
The set $\bigcup_{n\in \mathbb{N}} \bigcap _{i=1}^{r}f^{-na_i}(B_{k_i})$ is clear open, and it is dense by (2).
Then by Baire category theorem, $Y$ is a dense $G_\delta$ subset of $X$.
Let $x\in Y$ and $W$ be a non-empty open subset of $X^r$.
There exists $(k_1,k_2,\dotsc,k_r)\in \mathbb{N}^r$ such that
$B_{k_1}\times B_{k_2}\times\dotsb \times B_{k_r}\subset W$.
By the construction of $Y$, there exists $n\in \mathbb{N}$ such that $x\in \bigcap _{i=1}^{r}f^{-na_i}(B_{k_i})$,
that is $(f^{na_1}(x), f^{na_2}(x),\dotsc,$ $f^{na_r}(x))\in B_{k_1}\times B_{k_2}\times\dotsb \times B_{k_r}\subset W$.
Then $\{(f^{na_1}(x), f^{na_2}(x),\dotsc,$ $f^{na_r}(x)):n\in \mathbb{N}\}$ is dense in $X^r$,
i.e., $(x,x,\dotsc,x)$ is a transitive point of $(X^r,f^{(\mathbf{a})})$.
Therefore, $Trans(X^r,f^{(\mathbf{a})})\cap \Delta_{X^r}\neq\emptyset$ is a dense $G_\delta$ subset of $\Delta_{X^r}$.

(3) $\Rightarrow$ (1) is obvious.
\end{proof}

\begin{prop}\label{pro:delta-tran-imply-product-deta-trans}
Let $(X,f)$ be a dynamical system. If $(X,f)$ is $\Delta$-transitive, then for every
$r\in\mathbb{N}$ and  $\mathbf{a}\in\mathbb{N}^r_*$, $(X^r,f^{(\mathbf{a})})$ is also $\Delta$-transitive.
\end{prop}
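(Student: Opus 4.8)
The plan is to reduce, via Proposition~\ref{prop:triangle-trans-equ-condi}, to a single ``diagonal orbit'' statement about $(X,f)$, and then to settle that by packing the $r$ coordinate requirements into pairwise disjoint arithmetic progressions.

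First fix $r$, $\mathbf a=(a_1,\dots,a_r)\in\mathbb N^r_*$ and $n\in\mathbb N$, and write $g=f^{(\mathbf a)}$. Since $\Delta$-transitivity of a system is $\Delta$-transitivity with respect to $(1,2,\dots,m)$ for every $m$, it suffices to prove that $(X^r,g)$ is $\Delta$-transitive with respect to $(1,2,\dots,n)$. I would apply the equivalence $(1)\Leftrightarrow(2)$ of Proposition~\ref{prop:triangle-trans-equ-condi} to the system $(X^r,g)$ and the vector $(1,2,\dots,n)$. Because products of non-empty open subsets of $X$ form a base of $X^r$ and condition~(2) is monotone under shrinking the open sets, it is enough to show: for all non-empty open $W_{c,i}\subseteq X$ $(0\le c\le n,\ 1\le i\le r)$ there exist $k\in\mathbb N$ and $x_1,\dots,x_r\in X$ with
\[ x_i\in W_{0,i}\ \text{ and }\ f^{cka_i}(x_i)\in W_{c,i}\qquad (1\le i\le r,\ 1\le c\le n). \]
Indeed, unravelling $g^{ck}=f^{cka_1}\times\dots\times f^{cka_r}$ shows that this is exactly the non-emptiness of $\big(\prod_i W_{0,i}\big)\cap\bigcap_{c=1}^{n}g^{-ck}\big(\prod_i W_{c,i}\big)$, which is what condition~(2) requires for the basic open sets $\prod_i W_{c,i}$.

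Now comes the packing. Set $e_1=1$ and $e_{i+1}=e_i+na_i+1$, and put $P_i=\{\,e_i+ca_i:\ 0\le c\le n\,\}$ and $M=e_r+na_r$; then $P_1,\dots,P_r$ are pairwise disjoint subsets of $\{1,\dots,M\}$. Since $(X,f)$ is $\Delta$-transitive it is in particular $\Delta$-transitive with respect to $(1,2,\dots,M)$, so by condition~(2) of Proposition~\ref{prop:triangle-trans-equ-condi} applied to $(X,f)$ and this vector, for any non-empty open $U_1,\dots,U_M\subseteq X$ there are $k\in\mathbb N$ and $z\in X$ with $f^{k\iota}(z)\in U_\iota$ for all $1\le\iota\le M$. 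I would take $U_\iota=W_{c,i}$ whenever $\iota=e_i+ca_i\in P_i$ (this is unambiguous, by disjointness of the $P_i$) and $U_\iota=X$ for the remaining indices. With the resulting $k$ and $z$, put $x_i=f^{ke_i}(z)$. Then $x_i=f^{ke_i}(z)\in U_{e_i}=W_{0,i}$, and for $1\le c\le n$
\[ f^{cka_i}(x_i)=f^{cka_i}\!\big(f^{ke_i}(z)\big)=f^{k(e_i+ca_i)}(z)\in U_{e_i+ca_i}=W_{c,i}, \]
so $k$ and $x_1,\dots,x_r$ are as required, completing the argument.

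I do not expect a serious obstacle: once the packing is seen, the whole proposition is just $\Delta$-transitivity of $(X,f)$ against the single long vector $(1,2,\dots,M)$. The only mildly technical step is the reduction in the second paragraph---passing to product open sets and recognising the stated condition as condition~(2) of Proposition~\ref{prop:triangle-trans-equ-condi} for $(X^r,g)$---and the one genuinely new idea is to realise the ``speed-$a_i$'' diagonal requirement of the $i$-th coordinate as the sub-orbit $\{f^{k(e_i+ca_i)}(z)\}_{c=0}^{n}$ of a single orbit of $(X,f)$, with the shifts $e_i$ spread far enough apart that all $r$ coordinates are handled with one common $k$.
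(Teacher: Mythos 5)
Your proof is correct and follows essentially the same route as the paper's: both reduce via Proposition~\ref{prop:triangle-trans-equ-condi} to packing the $r$ arithmetic progressions $\{e_i+ca_i\}_{c=0}^{n}$ into pairwise disjoint positions inside $\{1,\dots,M\}$ and then invoke $\Delta$-transitivity of $(X,f)$ with respect to the single long vector $(1,2,\dots,M)$. The only difference is cosmetic: the paper takes $e_i=n\sum_{t=1}^{i}a_t$ and checks uniqueness of the representation $k=e_i+ja_i$ directly, while your recursive choice $e_{i+1}=e_i+na_i+1$ makes the disjointness of the blocks immediate.
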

\begin{proof}
Let $r\in\mathbb{N}$ and $\mathbf{a}=(a_1,a_2,\dotsc,a_r)\in\mathbb{N}_*^r$.
Fix an integer $n\in\mathbb{N}$.
Now we are going to prove that
$(X^{r},f^{(\mathbf{a})})$ is $\Delta$-transitive with respect to the vector $(1,2,\ldots,n)$.
Let $U_1^{(0)},U_2^{(0)},\ldots,$ $U_r^{(0)}$,
$U_1^{(1)},U_2^{(1)},\ldots,U_r^{(1)}$, $\ldots$ , $U_1^{(n)},U_2^{(n)},\ldots,U_r^{(n)}$
be non-empty open subsets of $X$. For every $k\in\{0,1,2,\ldots, n(\Sigma_{t=1}^{r}a_t)+na_r\}$,
we define a non-empty open subset $V_k$ of $X$ as
$$V_{k}:=\left\{\begin{array}{ll}
U_i^{(j)}, & \text{when }k=n(\Sigma_{t=1}^{i}a_t)+ja_i \text{ for some }0\leq j\leq n \text{ and } 1\leq i\leq r, \\
X, & \text{otherwise}.
\end{array}\right.$$
We first show that every $V_k$ is well defined.
If $k$ can not be represented as the form of
$n(\Sigma_{t=1}^{i}a_t)+ja_i$, then $V_{k}=X$.
If $k$ can be represented as the form of
$n(\Sigma_{t=1}^{i}a_t)+ja_i$, then we want to show that this representation is unique.
Suppose that there exist $i_1, i_2\in\{1,2,\ldots r\}$ and $j_1,j_2\in \{0,1,\ldots, n\}$ such that
$n(\Sigma_{t=1}^{i_1}a_t)+j_1a_{i_1}=n(\Sigma_{t=1}^{i_2}a_t)+j_2a_{i_2}$.
If $i_1>i_2$, then $n(\Sigma_{t=i_2+1}^{i_1}a_t)+j_1a_{i_1}=j_2a_{i_2}$.
By the fact $n\geq j_2$ and $a_{i_2+1}> a_{i_2}$, we have
$n(\Sigma_{t=i_2+1}^{i_1}a_t)+j_1a_{i_1}\geq na_{i_2+1}+j_1a_{i_1}>j_2a_{i_2}$,
which is a contradiction.
Then $i_1=i_2$.
Hence $j_1a_{i_1}=j_2a_{i_2}$ and then  $j_1=j_2$.

By proposition~\ref{prop:triangle-trans-equ-condi},
there exists some $m\in\mathbb{N}$ such that
$$V_0\cap\bigcap_{i=1}^{n(\Sigma_{t=1}^{r}a_t)+na_r}f^{-im}(V_i)\neq\emptyset,$$
which implies that
$$f^{-mn(\Sigma_{t=1}^{i}a_t)}V_{n(\Sigma_{t=1}^{i}a_t)}
\cap\bigcap_{j=0}^{n}f^{-m(n(\Sigma_{t=1}^{i}a_t)+ja_i)}(V_{n(\Sigma_{t=1}^{i}a_t)+ja_i})\neq\emptyset, \text{ for }i=1,2,\ldots,r. $$
Then
$$U_i^{(0)}\cap\bigcap_{j=0}^{n}f^{-mja_i}(U_i^{(j)})\neq\emptyset, \text{ for }i=1,2,\ldots,r,$$
and
$$(U_1^{(0)}\times U_2^{(0)}\times\cdots\times U_r^{(0)})
\cap\bigcap_{j=0}^{n}(f^{(\mathbf{a})})^{-mj}(U_1^{(j)}\times U_2^{(j)}\times\cdots\times U_r^{(j)})\neq\emptyset.$$
By Proposition~\ref{prop:triangle-trans-equ-condi},
$(X^r,f^{(\mathbf{a})})$ is $\Delta$-transitive with respect to the vector $(1,2,\ldots,n)$.
\end{proof}

The following result is essentially contained in~\cite{Moothathu-2010}.
For completeness, we provide a proof.
\begin{lem}
Let $(X,f)$ be a dynamical system.
If $(X,f)$ is $\Delta$-transitive with respect to $(1,2)$,
then it is weakly mixing.
\end{lem}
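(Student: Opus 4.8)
The plan is to unwind the hypothesis via Proposition~\ref{prop:triangle-trans-equ-condi} and then verify the product criterion for weak mixing directly. By Proposition~\ref{prop:triangle-trans-equ-condi}, $\Delta$-transitivity of $(X,f)$ with respect to $(1,2)$ is equivalent to the statement $(\star)$: for all non-empty open $W_0,W_1,W_2\subseteq X$ there is $n\in\mathbb{N}$ with $W_0\cap f^{-n}(W_1)\cap f^{-2n}(W_2)\neq\emptyset$. Taking $W_2=X$ in $(\star)$ already shows that $(X,f)$ is transitive, so $f$ is surjective (the statement being trivial when $X$ is a single point), and hence $f^{-k}(W)$ is a non-empty open set for every non-empty open $W\subseteq X$ and every $k\geq 0$. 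Since products $U_1\times U_2$ form a base of the topology of $X^2$, it suffices to prove that $N_f(U_1,V_1)\cap N_f(U_2,V_2)\neq\emptyset$ for all non-empty open $U_1,U_2,V_1,V_2\subseteq X$.

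Fix such $U_1,U_2,V_1,V_2$. Note first that if $V_1\cap U_2$ happened to be non-empty, one could simply apply $(\star)$ to $W_0=U_1$, $W_1=V_1\cap U_2$, $W_2=V_2$: the resulting point $z$ with $z\in U_1$, $f^nz\in V_1\cap U_2$ and $f^{2n}z\in V_2$ witnesses $n\in N_f(U_1,V_1)$ through $z$, and $n\in N_f(U_2,V_2)$ through $f^nz$, since $f^n(f^nz)=f^{2n}z\in V_2$. In general $V_1\cap U_2$ may be empty, so instead I would use transitivity of $(X,f)$ to choose $s\in\mathbb{N}$ with $V_1\cap f^{-s}(U_2)\neq\emptyset$, and then apply $(\star)$ to the non-empty open sets $W_0=U_1$, $W_1=V_1\cap f^{-s}(U_2)$ and $W_2=f^{-s}(V_2)$, obtaining $n\in\mathbb{N}$ and a point $z$ with $z\in U_1$, $f^nz\in V_1\cap f^{-s}(U_2)$ and $f^{2n}z\in f^{-s}(V_2)$.

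Both required hitting relations then hold at the common index $n$: the point $z\in U_1$ gives $n\in N_f(U_1,V_1)$ because $f^nz\in V_1$; and the point $f^{n+s}z$ lies in $U_2$ because $f^nz\in f^{-s}(U_2)$, while
\[
 f^n\bigl(f^{n+s}z\bigr)=f^{2n+s}z=f^{s}\bigl(f^{2n}z\bigr)\in V_2 ,
\]
so $n\in N_f(U_2,V_2)$ as well. The point of the auxiliary shift $s$ is precisely that it translates both the source $f^{n+s}z$ and the target $f^{2n+s}z$ of the second transition by the same amount, so the gap between them remains equal to $n$; this is why $W_2$ must be taken to be $f^{-s}(V_2)$ rather than $V_2$. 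Hence $N_f(U_1,V_1)\cap N_f(U_2,V_2)\neq\emptyset$, so $(X\times X,f\times f)$ is transitive and $(X,f)$ is weakly mixing.

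The step I expect to be the real obstacle is exactly this index-matching. A single application of $(\star)$ produces only hitting relations that are either \emph{chained} (the target of one being the source of the next, both at index $n$) or that sit at the two distinct indices $n$ and $2n$, whereas weak mixing requires two \emph{independent} hitting relations at one common index; the role of the shift $s$ is to reroute the chained structure so that it yields exactly that. The remaining ingredients — unwinding Proposition~\ref{prop:triangle-trans-equ-condi}, checking the three auxiliary sets are non-empty and open (surjectivity of $f$ being used for $f^{-s}(V_2)$), and reducing to a base of $X^2$ — are routine. As a remark, one could instead deduce weak mixing from Lemma~\ref{lem:weak-mixing}(1) by showing directly that every $N_f(U,V)$ is thick, but the product formulation above seems more economical.
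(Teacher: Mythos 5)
Your proof is correct, but the closing step differs from the paper's. The paper applies Proposition~\ref{prop:triangle-trans-equ-condi} only with $W_0=U$, $W_1=W_2=V$, obtaining a single $n\in N(U,V)\cap N(V,V)$, and then stops there: it implicitly invokes the classical criterion that a system in which $N(U,V)\cap N(V,V)\neq\emptyset$ for all non-empty open $U,V$ is weakly mixing (a folklore fact, essentially Furstenberg's intersection trick). You instead verify transitivity of $(X\times X, f\times f)$ on a basic open set $U_1\times U_2 \to V_1\times V_2$ directly, and your auxiliary shift $s$ with $V_1\cap f^{-s}(U_2)\neq\emptyset$ is exactly the device needed to decouple the ``chained'' hitting relations that a single application of condition~(2) produces; the identity $f^n(f^{n+s}z)=f^s(f^{2n}z)\in V_2$ checks out, and your appeal to surjectivity (which does follow from transitivity on a compact space, since $f(X)$ is closed and dense) correctly guarantees $f^{-s}(V_2)\neq\emptyset$. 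The trade-off is that the paper's argument is two lines but leans on an unproved (though standard) equivalence, whereas yours is fully self-contained from the definition of weak mixing at the cost of the extra bookkeeping with $s$. Both are valid.
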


\begin{proof}
Let $U$ and $V$ be two non-empty open subsets of $X$.
To show that $(X,f)$ is weakly mixing, it suffices to show that $N(U,V)\cap N(V,V)\neq\emptyset$.
By Proposition~\ref{prop:triangle-trans-equ-condi},
there exists $n\in\mathbb{N}$ such that
\[U\cap f^{-n}(V)\cap f^{-2n}(V)\neq\emptyset.\]
Then $U\cap f^{-n}(V)\neq\emptyset$ and $V\cap f^{-n}(V)\neq\emptyset$,
that is $n\in N(U,V)\cap N(V,V)$.
\end{proof}

\begin{ex}
For every $p>2$, there exists a subshift which is $\Delta$-transitive with respect to $(1,2)$
but not with respect to $(1,p)$.
\end{ex}
\begin{proof}
Let $\Sigma_{2}=\bigl\{x=(x_{n})_{n=-\infty}^{\infty}:x_{n}\in\{0, 1\}\bigl\}$.
The two-sided shift $\sigma:\Sigma_{2}\rightarrow\Sigma_{2}$ is defined by
\[\sigma(\dotsc x_{-2}x_{-1}.x_{0}x_{1}x_{2}\dotsc)=\dotsc x_{-2}x_{-1}x_{0}.x_{1}x_{2}\dotsc,\]
which is a homeomorphism.
We say that a word $w=w_1w_2\dotsc w_n$ appears in $x\in \Sigma_{2}$
if there exists $j\in\mathbb{Z}$ such that $x_{j+i}=w_i$ for $i=1,2,\dotsc,n$.
Let $p>2$. Put $F=\{11\}\cup \{1u1v1:(p-1)(|u|+1)=|v|+1\}$ and
$X=\{x\in\Sigma_{2}:\text{ if }u\text{ appears in } x,\text{ then } u\text{ is not in } F\}$.
It is clear that $X$ is closed and $\sigma$-invariant.
We are going to show that $(X, \sigma|_{X})$ is $\Delta$-transitive with respect to $(1,2)$ but not respect to $(1,p)$.

Let $x$, $y$, $z\in X$ and $k\in \mathbb{N}$.
Let $u=x_{-k}\dotsc x_{0}\dotsc x_{k}$,
$v=y_{-k}\dotsc y_{0}\dotsc y_{k}$ and $w=z_{-k}\dotsc $ $z_{0}\dotsc z_{k}$.
Put $[u]=\{x'\in X: x'_{-k}\dotsc x'_{0}\dotsc x'_{k}=u\},  [v]=\{x'\in X: x'_{-k}\dotsc x'_{0}\dotsc x'_{k}=v\}$
and $[w]=\{x'\in X: x'_{-k}\dotsc x'_{0}\dotsc x'_{k}=w\}$.
In the product topology, $[u],[v]$ and $[w]$ are basic neighborhoods of $x, y$ and $z$ respectively.
Consider a point $r(n)\in \Sigma_{2}$ defined as $r(n)=0^{\infty}u0^{n-2k-1}v0^{n-2k-1}w0^{\infty}$.
Then if $n$ is large enough, ones have  $r(n)\in X$ and $r(n)\in[u]\cap\sigma^{-n}[v]\cap\sigma^{-2n}[w]$.
By Proposition~\ref{prop:triangle-trans-equ-condi},
$(X, \sigma|_{X})$ is $\Delta$-transitive with respect to $(1,2)$.

Now we want to show that $(X,\sigma|_{X})$ is not $\Delta$-transitive with respect to $(1,p)$.
Suppose that there exists some $x\in X$ such that $\{(\sigma\times
\sigma^{p})^{n}(x, x):n\in \mathbb{N}\}$ is dense in $X^{2}$.
Let $W=\{y\in
X:y_{0}=1\}$.
Pick a $k\in\mathbb{N}$ such that $y=\sigma^{k}(x)\in W$.
Since  $\sigma\times\sigma^{p}$ commutes with $\sigma\times\sigma$,
the orbit of $(y, y)$ under $\sigma\times\sigma^{p}$ is dense in $X^2$.
Then there exists some $n\in\mathbb{N}$ such that $(\sigma^{n}(y),\sigma^{pn}(y))\in
W\times W$.
Thus, $y, \sigma^{n}(y), \sigma^{pn}(y)\in W$, that is $y_{0}=y_{n}=y_{pn}=1$.
This contradicts to the construction of $X$.
Then by Proposition~\ref{prop:triangle-trans-equ-condi},
$(X, \sigma|_{X})$ is not $\Delta$-transitive with respect to $(1,p)$.
\end{proof}

\begin{rem}
The full shift is $\Delta$-transitive and
there exists a strongly mixing system which is not $\Delta$-transitive~\cite{Moothathu-2010}.
Then by Proposition~\ref{pro:Not-F-trans-iter},
the collection of $\Delta$-transitive systems can not be classified by $\mathcal{F}$-transitivity.
\end{rem}

\begin{thm}\label{thm:triangle-trans-vector-Family}
Let $(X,f)$ be a dynamical system and $\mathbf{a}=(a_1,a_2,\dotsc,a_r)\in\mathbb{N}_*^r$.
Then the following conditions are equivalent.
\begin{enumerate}
\item $(X,f)$ is $\Delta$-$\mathbf{a}$-transitive;
\item  $(X,f)$ is $\mathcal{F}[\mathbf{a}]$-point transitive;
\item $Trans_{\mathcal{F}[\mathbf{a}]}(X,f)$ is residual in $X$.
\end{enumerate}
\end{thm}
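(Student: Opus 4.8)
The plan is to prove the cycle of implications (1) $\Rightarrow$ (3) $\Rightarrow$ (2) $\Rightarrow$ (1), with Proposition~\ref{prop:triangle-trans-equ-condi} as the bridge between diagonal transitivity and the combinatorial hitting condition, after unravelling the definition of $\mathcal{F}[\mathbf{a}]$ into a statement about intersections of preimages.

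For (1) $\Rightarrow$ (3), I would first note that $\Delta$-$\mathbf{a}$-transitivity readily forces $f$ to be surjective: a diagonal transitive point of $(X^r,f^{(\mathbf{a})})$ has $\omega$-limit set all of $X^r$, and this set lies in $f(X)^r$ since every non-initial iterate lands in $f(X)^r$, so $X=f(X)$. Fix a countable base $\{B_k:k\in\mathbb{N}\}$ of non-empty open subsets of $X$. Unwinding the definitions and using that $\mathcal{F}[\mathbf{a}]$ is hereditary upward, one checks that
\[Trans_{\mathcal{F}[\mathbf{a}]}(X,f)=\bigcap_{k\in\mathbb{N}}\bigcap_{\mathbf{n}\in\mathbb{Z}_+^r}\bigcup_{m\in\mathbb{N}}\bigcap_{i=1}^{r}f^{-(ma_i+n_i)}(B_k),\]
a countable intersection of open sets. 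To see that each set $\bigcup_{m\in\mathbb{N}}\bigcap_{i=1}^{r}f^{-(ma_i+n_i)}(B_k)$ is dense, take any non-empty open $W\subset X$; since $f$ is surjective, each $f^{-n_i}(B_k)$ is a non-empty open set, so Proposition~\ref{prop:triangle-trans-equ-condi}(2), applied with $U_0=W$ and $U_i=f^{-n_i}(B_k)$, gives $m\in\mathbb{N}$ with $W\cap\bigcap_{i=1}^{r}f^{-(ma_i+n_i)}(B_k)\neq\emptyset$. Baire's category theorem then shows $Trans_{\mathcal{F}[\mathbf{a}]}(X,f)$ is residual in $X$.

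The implication (3) $\Rightarrow$ (2) is immediate, since $X$ is a non-empty compact metric (hence Baire) space, so a residual set is non-empty. For (2) $\Rightarrow$ (1), let $x$ be an $\mathcal{F}[\mathbf{a}]$-transitive point. Applying the defining property of $N(x,U)\in\mathcal{F}[\mathbf{a}]$ to the vectors $(N,0,\dots,0)$ shows every $N(x,U)$ meets $[N+1,+\infty)$, hence is infinite, so $x$ is a transitive point and $(X,f)$ is transitive. By Proposition~\ref{prop:triangle-trans-equ-condi} it now suffices to verify the hitting condition. Given non-empty open sets $U_0,U_1,\dots,U_r$, pick $m_i\in N(x,U_i)$ for $i=1,\dots,r$, put $W=\bigcap_{i=1}^{r}f^{-m_i}(U_i)$, a non-empty open set containing $x$, and choose $t\in N(x,U_0)$ with $t\geq\max_{1\leq i\leq r}m_i$. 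Applying $N(x,W)\in\mathcal{F}[\mathbf{a}]$ to the vector $(t-m_1,\dots,t-m_r)\in\mathbb{Z}_+^r$ gives $k\in\mathbb{N}$ with $f^{ka_i+t-m_i}(x)\in W$ for every $i$; then $f^{ka_i}(f^t(x))=f^{m_i}\bigl(f^{ka_i+t-m_i}(x)\bigr)\in f^{m_i}(W)\subset U_i$ for every $i$, while $f^t(x)\in U_0$. Hence $f^t(x)\in U_0\cap\bigcap_{i=1}^{r}f^{-ka_i}(U_i)$, which is therefore non-empty, and Proposition~\ref{prop:triangle-trans-equ-condi} yields that $(X,f)$ is $\Delta$-$\mathbf{a}$-transitive.

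The routine parts here are the set-theoretic unwinding and the Baire category argument; the one point that needs care is the translation bookkeeping in (2) $\Rightarrow$ (1) — choosing $t$ large enough that the shifts $t-m_i$ are non-negative (so that $(t-m_1,\dots,t-m_r)\in\mathbb{Z}_+^r$), and checking that composing $f^{m_i}$ with $f^{ka_i+t-m_i}$ reconstructs $f^{ka_i+t}$, so that the single point $f^t(x)$ simultaneously lies in $U_0$ and is carried into each $U_i$ by $f^{ka_i}$.
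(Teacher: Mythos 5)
Your argument is correct, and two of the three implications coincide with the paper's: the proof of (1) $\Rightarrow$ (3) is the same Baire category argument on the open sets $\bigcup_{m}\bigcap_{i}f^{-(ma_i+n_i)}(B_k)$ (you merely write $Trans_{\mathcal{F}[\mathbf{a}]}(X,f)$ directly as the countable intersection instead of first introducing an auxiliary residual set $Y$ and then checking $Y\subset Trans_{\mathcal{F}[\mathbf{a}]}(X,f)$, and you make explicit the surjectivity of $f$ that the paper uses tacitly when it assumes each $f^{-n_i}(U)$ contains a basic open set). Where you genuinely diverge is in (2) $\Rightarrow$ (1). The paper threads the target sets together by iterated transitivity, producing a single non-empty open set $U=U_1\cap f^{-\ell_1}(U_2)\cap\dotsb\cap f^{-(\ell_1+\dotsb+\ell_{r-1})}(U_r)$ and then feeding the vector $(a_1\ell,\ a_2\ell-\ell_1,\ \dotsc)$ into $N(x,U)\in\mathcal{F}[\mathbf{a}]$; this has the advantage of showing directly that the diagonal point $(x,x,\dotsc,x)$ over the $\mathcal{F}[\mathbf{a}]$-transitive point $x$ is itself a transitive point of $(X^r,f^{(\mathbf{a})})$. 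You instead anchor everything at $x$: since $x\in W=\bigcap_i f^{-m_i}(U_i)$ automatically, you avoid the iterated-transitivity construction entirely and only need $N(x,U_i)\neq\emptyset$ plus one application of the family condition to $N(x,W)$ with the translation vector $(t-m_1,\dotsc,t-m_r)$; the payoff is a shorter verification of the hitting condition (2) of Proposition~\ref{prop:triangle-trans-equ-condi}, at the cost of concluding $\Delta$-$\mathbf{a}$-transitivity only through that equivalence rather than exhibiting the diagonal transitive point explicitly. Both translation bookkeepings are sound.
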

\begin{proof}
(1) $\Rightarrow$ (3) Assume that $(X,f)$ is $\Delta$-$\mathbf{a}$-transitive.
Let $\{B_k: k\in \mathbb{N}\}$ be a countable base of open balls of $X$.
Put
$$Y=\bigcap_{(k_1,k_2,\dotsc,k_r)\in \mathbb{N}^r}\bigcup_{n\in \mathbb{N}} \bigcap _{i=1}^{r}f^{-na_i}(B_{k_i}).$$
By the proof of Proposition~\ref{prop:triangle-trans-equ-condi},
$Y$ is a dense $G_\delta$ subset of $X$.
Now it suffices to show that every point in $Y$ is an $\mathcal{F}[\mathbf{a}]$-transitive point.
Let $x\in Y$, $U$ be a non-empty open subset of $X$ and $n_1,n_2,\dotsc,n_r\in\mathbb{Z}_+$.
There exists $(k_1,k_2,\dotsc,k_r)\in \mathbb{N}^r$ such that
$$B_{k_1}\times B_{k_2}\times\dotsb\times B_{k_r}\subset f^{-n_1}(U)\times f^{-n_2}(U)\times\dotsb\times f^{-n_r}(U).$$
By the construction of $Y$,
there is some $k\in \mathbb{N}$ such that $x\in \bigcap _{i=1}^{r}f^{-ka_i}(B_{k_i})$,
then
$$\bigl(f^{a_1k}(x), f^{a_2k}(x),\dotsc,f^{a_rk}(x)\bigl)\in B_{k_1}\times B_{k_2}\times\dotsb\times B_{k_r}
\subset f^{-n_1}(U)\times f^{-n_2}(U)\times\dotsb\times f^{-n_r}(U),$$
which implies that $\{a_1k+n_1,a_2k+n_2,\dotsc,a_rk+n_r\}\subset N(x,U)$.

(3) $\Rightarrow$ (2) is obvious.

(2) $\Rightarrow$ (1)
Let $U_1$, $U_2,\dotsc, U_r$ be non-empty open subsets of $X$ and $x\in Trans_{\mathcal{F}[\mathbf{a}]}(X,f)$.
Applying the transitivity of $(X,f)$ to the two non-empty open sets $U_{r-1}$ and $U_r$,
we pick an $\ell_{r-1}\in\mathbb{N}$ such that
$$U_{r-1}\cap f^{-\ell_{r-1}}(U_r)\neq\emptyset.$$
Now applying the transitivity of $(X,f)$ to the two non-empty open sets
$U_{r-2}$ and $U_{r-1}\cap f^{-\ell_{r-1}}(U_r)$,
we pick an $\ell_{r-2}\in\mathbb{N}$ such that
$$U_{r-2}\cap f^{-\ell_{r-2}}(U_{r-1}\cap f^{-\ell_{r-1}}(U_r))\neq\emptyset.$$
After repeating this process $(r-1)$ times,
we obtain a sequence $\{\ell_i\}_{i=1}^{r-1}$ of positive integers and
\begin{align*}
U:=&U_1\cap f^{-\ell_{1}}\Bigl(U_{2}\cap f^{-\ell_{2}}\bigl(\dotsb\cap(U_{r-1}\cap f^{-\ell_{r-1}}(U_r))\bigr)\Bigr)\\
=& U_1\cap f^{-\ell_{1}}(U_{2})\cap f^{-(\ell_{1}+\ell_{2})}(U_{3})\cap\dotsb\cap
 f^{-(\ell_{1}+\ell_{2}+\dotsb+\ell_{r-1})}(U_r)\neq\emptyset.
\end{align*}
Put $\ell:=\ell_{1}+\ell_{2}+\dotsb+\ell_{r-1}$.
Since $N(x,U)\in\mathcal{F}[\mathbf{a}]$,
for positive integers $a_1\ell$,  $a_2\ell-\ell_{1}$,
$a_3\ell-\ell_{1}-\ell_{2}, \dotsc, a_r\ell-\ell_{1}-\ell_{2}-\dotsb-\ell_{r-1}$,
there exists  $k\in\mathbb{N}$ such that
$$\{a_1k+a_1\ell, a_2k+a_2\ell-\ell_{1},\dotsc, a_rk+a_r\ell-\ell_{1}-\ell_{2}-\dotsb-\ell_{r-1}\}\subset N(x,U).$$
Then
\begin{align*}
  &f^{{a_1k+a_1\ell}}(x)\in U_1,\\
  &f^{a_2k+a_2\ell-\ell_{1}}(x)\in f^{-\ell_{1}}(U_2),\\
  &\dotsc \\
  &f^{a_rk+a_r\ell-\ell_{1}-\ell_{2}-\dotsb-\ell_{r-1}}(x)\in f^{-(\ell_{1}+\ell_{2}+\dotsb+\ell_{r-1})}(U_r),
\end{align*}
which imply that
$$(f^{a_1(k+\ell)}(x),f^{a_2(k+\ell)}(x),\dotsc,f^{a_r(k+\ell)}(x))\in U_1\times U_2\times\dotsb\times U_r.$$
Thus $(x,x,\dotsc,x)$ is a transitive point of $(X^r,f^{(\mathbf{a})})$.
\end{proof}

Recall that $\mathcal{F}[\infty]=\bigcap\limits_{i=1}^\infty \mathcal{F}[\mathbf{a}_i]$,
where $\mathbf{a}_i=(1,2,\dotsc,i)$ for $i\in\mathbb{N}$.
We have the following characterization of $\Delta$-transitivity.

\begin{thm}
Let $(X,f)$ be a dynamical system.
Then the following conditions are equivalent.
\begin{enumerate}
\item $(X,f)$ is $\Delta$-transitive;
\item  $(X,f)$ is $\mathcal{F}[\infty]$-point transitive;
\item $Trans_{\mathcal{F}[\infty]}(X,f)$ is residual in $X$.
\end{enumerate}

\end{thm}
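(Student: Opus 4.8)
The plan is to reduce everything to Theorem~\ref{thm:triangle-trans-vector-Family} applied to the vectors $\mathbf{a}_i=(1,2,\dotsc,i)$, $i\in\mathbb{N}$. The first thing I would record is the bookkeeping identity $Trans_{\mathcal{F}[\infty]}(X,f)=\bigcap_{i=1}^{\infty}Trans_{\mathcal{F}[\mathbf{a}_i]}(X,f)$. This holds because, by definition, $\mathcal{F}[\infty]=\bigcap_{i=1}^{\infty}\mathcal{F}[\mathbf{a}_i]$, so for a point $x$ and a non-empty open set $U$ one has $N(x,U)\in\mathcal{F}[\infty]$ if and only if $N(x,U)\in\mathcal{F}[\mathbf{a}_i]$ for every $i$; quantifying over all non-empty open $U$ then shows $x$ is an $\mathcal{F}[\infty]$-transitive point exactly when it is an $\mathcal{F}[\mathbf{a}_i]$-transitive point for every $i$.

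For $(1)\Rightarrow(3)$: if $(X,f)$ is $\Delta$-transitive, then by definition it is $\Delta$-$\mathbf{a}_i$-transitive for every $i$, so Theorem~\ref{thm:triangle-trans-vector-Family} gives that each $Trans_{\mathcal{F}[\mathbf{a}_i]}(X,f)$ is residual in $X$. Since $X$ is a compact metric space, hence a Baire space, the countable intersection $\bigcap_{i=1}^{\infty}Trans_{\mathcal{F}[\mathbf{a}_i]}(X,f)=Trans_{\mathcal{F}[\infty]}(X,f)$ is again residual. The implication $(3)\Rightarrow(2)$ is immediate, because a residual subset of the non-empty compact metric space $X$ is non-empty, so an $\mathcal{F}[\infty]$-transitive point exists.

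For $(2)\Rightarrow(1)$: fix an $\mathcal{F}[\infty]$-transitive point $x$. By the opening observation, $x$ is an $\mathcal{F}[\mathbf{a}_i]$-transitive point for every $i$, so $(X,f)$ is $\mathcal{F}[\mathbf{a}_i]$-point transitive for every $i$, and Theorem~\ref{thm:triangle-trans-vector-Family} yields that $(X,f)$ is $\Delta$-$\mathbf{a}_i$-transitive for every $i$; that is, $(X,f)$ is $\Delta$-transitive.

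There is no serious obstacle here: the content has already been done in Theorem~\ref{thm:triangle-trans-vector-Family}, and what remains is purely formal. The only two points that need a word of care are the interchange of the quantifiers ``for every open $U$'' and ``for every $i$'' in the opening identity — which works precisely because $\mathcal{F}[\infty]$ was defined as an intersection — and the standard Baire-category fact that a countable intersection of residual subsets of $X$ is still residual, which is why one should observe explicitly that $X$ is a Baire space.
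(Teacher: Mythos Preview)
Your proposal is correct and follows essentially the same route as the paper: reduce to Theorem~\ref{thm:triangle-trans-vector-Family} for each $\mathbf{a}_i$, use the identity $Trans_{\mathcal{F}[\infty]}(X,f)=\bigcap_{i=1}^\infty Trans_{\mathcal{F}[\mathbf{a}_i]}(X,f)$, and invoke Baire category for $(1)\Rightarrow(3)$. Your write-up is slightly more explicit about the bookkeeping and the Baire-space point, but the argument is the same.
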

\begin{proof}
(1) $\Rightarrow$ (3) For every $\mathbf{a}_i$, $(X,f)$ is $\Delta$-transitive with respect to $\mathbf{a}_i$.
By Theorem~\ref{thm:triangle-trans-vector-Family},
$Trans_{\mathcal{F}[\mathbf{a}_i]}(X,f)$ is residual in $X$ for every $\mathbf{a}_i$.
Then $\bigcap_{i=1}^{\infty}Trans_{\mathcal{F}[\mathbf{a}_i]}(X,f)$ is also residual in $X$.
Now the result follows from the fact
$\bigcap_{i=1}^{\infty}Trans_{\mathcal{F}[\mathbf{a}_i]}(X,f)=Trans_{\mathcal{F}[\infty]}(X,f)$.

(3) $\Rightarrow$ (2) is obvious.

(2) $\Rightarrow$ (1)
Let $U$ be a non-empty open subset of $X$ and $x\in Trans_{\mathcal{F}[\infty]}(X,f)$.
Then $N(x,U)\in\mathcal{F}[\infty]$, which implies that $N(x,U)\in\mathcal{F}[\mathbf{a}_i]$ for every $i\in\mathbb{N}$.
Hence $(X,f)$ is $\mathcal{F}[\mathbf{a}_i]$-point transitive for every $i\in\mathbb{N}$.
By Theorem~\ref{thm:triangle-trans-vector-Family}, it is $\Delta$-transitive.
\end{proof}

Let $(X,f)$ be a dynamical system.
Following ~\cite{Oprocha-Zhang-2011}, we say that a subset $A$ of $X$ is a \emph{transitive set} if
for every pair of non-empty open subsets $U, V$ of $X$ intersecting $A$, the set $N(A \cap U, V )$ is non-empty;
a recurrent set if for every non-empty open subset $U$ of $X$ intersecting $A$, the set $N(A \cap U,U)$ is non-empty.
It is not hard to see that a closed subset $A$ of $X$ is a transitive set if and only if
$(\overline{Orb(A,f)},f)$ is transitive and $Trans(\overline{Orb(A,f)},f)\cap A$ is residual in $A$,
where $Orb(A,f)=\bigcup_{n=0}^\infty f^n(A)$.
The following proposition indicates that if $(X,f)$ is $\Delta$-transitive with respect to $(1,2,\dotsc,n)$
then $\Delta_{n+1}$ is a transitive set in $(X^{n+1},f^{(n+1)})$.

\begin{prop}\label{prop:delta-transitive-set}
Let $(X,f)$ be a dynamical system and $\mathbf{a}=(a_1,a_2,\dotsc,a_{r+1})\in\mathbb{N}_*^{r+1}$.
 Then $\overline{Orb(\Delta_{X^{r+1}},f^{(\mathbf{a})})} = X^{r+1}$
 if and only of $(X,f)$ is $\Delta$-$\mathbf{a}'$-transitive where $\mathbf{a}'=(a_2-a_1,\dotsc,a_{r+1}-a_1)$.
\end{prop}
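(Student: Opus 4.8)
The plan is to route everything through Proposition~\ref{prop:triangle-trans-equ-condi} applied to the vector $\mathbf{a}'=(a'_1,\dots,a'_r)\in\mathbb{N}_*^r$, $a'_i:=a_{i+1}-a_1$; write $f^{(\mathbf{a}')}=f^{a'_1}\times\dots\times f^{a'_r}$ and $\delta_k\colon X\to X^k$ for the diagonal embedding. The computation everything rests on is the identity, valid for all $x\in X$ and $n\in\mathbb{Z}_+$ because $na_i-na_1=n(a_i-a_1)\ge0$,
\[
(f^{(\mathbf{a})})^n\bigl(\delta_{r+1}(x)\bigr)=\bigl(y,\,f^{na'_1}(y),\dots,f^{na'_r}(y)\bigr),\qquad y:=f^{na_1}(x).
\]
So $(f^{(\mathbf{a})})^n(\Delta_{X^{r+1}})$ meets a basic open box $V_1\times\dots\times V_{r+1}$ iff there is $x\in X$ with $y=f^{na_1}(x)\in V_1$ and $f^{na'_i}(y)\in V_{i+1}$ for $i=1,\dots,r$.

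Assume first $(X,f)$ is $\Delta$-$\mathbf{a}'$-transitive; by Proposition~\ref{prop:triangle-trans-equ-condi} this is condition~(2) for $\mathbf{a}'$. Taking there $U_0=X$, one $U_1=W$ arbitrary and the rest equal to $X$ shows $f^{a'_1}(X)$ is dense, hence (being compact) all of $X$, so $f$ is surjective. Now given a box $V_1\times\dots\times V_{r+1}$, condition~(2) with $U_0=V_1$, $U_i=V_{i+1}$ yields $n\ge1$ and $y\in V_1$ with $f^{na'_i}(y)\in V_{i+1}$ for all $i$; surjectivity of $f^{na_1}$ provides $x$ with $f^{na_1}(x)=y$, and the identity gives $(f^{(\mathbf{a})})^n(\delta_{r+1}(x))\in V_1\times\dots\times V_{r+1}$. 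So the orbit of $\Delta_{X^{r+1}}$ meets every box, i.e.\ $\overline{Orb(\Delta_{X^{r+1}},f^{(\mathbf{a})})}=X^{r+1}$.

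Conversely, assume $\overline{Orb(\Delta_{X^{r+1}},f^{(\mathbf{a})})}=X^{r+1}$, and verify condition~(2) of Proposition~\ref{prop:triangle-trans-equ-condi} for $\mathbf{a}'$. Given non-empty open $U_0,\dots,U_r$, after shrinking them (which only strengthens the goal) one may assume the box $U_0\times U_1\times\dots\times U_r$ misses $\Delta_{X^{r+1}}$ — the sole obstruction being the degenerate case where all $U_j$ coincide with one isolated singleton of $X$, which is treated directly (such an isolated point is then periodic, and one argues by hand). Density gives $n$ and $x$ with $(f^{(\mathbf{a})})^n(\delta_{r+1}(x))$ in the box; since the box misses the diagonal and $\delta_{r+1}(x)\in\Delta_{X^{r+1}}$, necessarily $n\ge1$. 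Setting $y=f^{na_1}(x)$, the identity gives $y\in U_0$ and $f^{na'_i}(y)\in U_i$ for $i=1,\dots,r$, i.e.\ $U_0\cap\bigcap_{i=1}^r f^{-na'_i}(U_i)\neq\emptyset$. Hence $(X,f)$ is $\Delta$-$\mathbf{a}'$-transitive. The crux on both sides is the same — forcing the relevant intersection to occur at a \emph{positive} time rather than only through the time-$0$ copy $\Delta_{X^{r+1}}$ — handled via condition~(2) (which is already phrased with positive times) and surjectivity in one direction, and via the diagonal-avoiding choice of box in the other.
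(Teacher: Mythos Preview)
Your approach is essentially the paper's: both directions go through condition~(2) of Proposition~\ref{prop:triangle-trans-equ-condi} via the identity $(f^{(\mathbf{a})})^n(\delta_{r+1}(x))=(y,f^{na'_1}(y),\dots,f^{na'_r}(y))$ with $y=f^{na_1}(x)$, together with surjectivity of $f$ in the sufficiency direction. You are in fact more careful than the paper on two points it glosses over: you argue surjectivity explicitly (it does follow, since $\Delta$-$\mathbf{a}'$-transitivity implies transitivity), and you worry about forcing $n\ge1$ in the necessity direction (the paper simply writes $n\in\mathbb{N}$, although the orbit is defined with $n\ge0$).

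Your diagonal-avoidance trick for the latter is sound, but the isolated-point edge case is only hand-waved: you assert such a point must be periodic and that ``one argues by hand'' without actually doing so. A cleaner route avoids the case split entirely. The hypothesis $\overline{Orb(\Delta_{X^{r+1}},f^{(\mathbf{a})})}=X^{r+1}$ already forces $f$ to be surjective when $|X|\ge2$: since $Orb(\Delta_{X^{r+1}},f^{(\mathbf{a})})\subset\Delta_{X^{r+1}}\cup f(X)^{r+1}$ and both sets on the right are closed, any $p\notin f(X)$ and $z\ne p$ would give a point $(p,z,\dots,z)$ outside the orbit closure. With $f^{(\mathbf{a})}$ surjective, continuity yields
\[
X^{r+1}=f^{(\mathbf{a})}(X^{r+1})=f^{(\mathbf{a})}\bigl(\overline{Orb(\Delta_{X^{r+1}},f^{(\mathbf{a})})}\bigr)\subset\overline{\bigcup_{n\ge1}(f^{(\mathbf{a})})^n(\Delta_{X^{r+1}})},
\]
so the $n\ge1$ part of the orbit is already dense and you may take $n\ge1$ directly for any box, with no shrinking and no isolated-point exception.
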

\begin{proof}
Fix non-empty open subsets $U_0$, $U_1, \dotsc, U_r$ of $X$ and put $W=U_0\times U_1\times \dotsc\times U_r$.

Necessity.
Since  $\overline{Orb(\Delta_{X^{r+1}},f^{(\mathbf{a})})} = X^{r+1}$,
there exists $n\in \mathbb{N}$ such that $(f^{(\mathbf{a})})^{n}(\Delta_{X^{r+1}})\cap W\neq \emptyset$.
Thus there exists $x\in X$ such that $f^{na_i}(x)\in U_{i-1}$, $i=1,2,\ldots,r+1$.
Put $y=f^{na_1}(x)$. Then
\[y\in U_0\cap\bigcap_{i=1}^{r} f^{-n(a_{i+1}-a_1)}(U_i).\]
By Proposition~\ref{prop:triangle-trans-equ-condi},
 $(X,f)$ is $\Delta$-$\mathbf{a}'$-transitive where $\mathbf{a}'=(a_2-a_1,\dotsc,a_{r+1}-a_1)$.

Sufficiency.
Since $(X,f)$ is $\Delta$-$\mathbf{a}'$-transitive where $\mathbf{a}'=(a_2-a_1,\dotsc,a_{r+1}-a_1)$,
by Proposition~\ref{prop:triangle-trans-equ-condi},
there exists $n\in \mathbb{N}$ and
\[y\in U_0\cap\bigcap_{i=1}^{r} f^{-n(a_{i+1}-a_1)}(U_i).\]
Since the map $f^{na_1}$ is surjective,
there exists $x\in X$ with $f^{na_1}(x)=y$ and then
\[x\in \bigcap_{i=0}^{r} f^{-na_{i+1}}(U_i),\]
that is $(f^{(\mathbf{a})})^{n}(\Delta_{X^{r+1}})\cap W\neq \emptyset$.
Then $\overline{Orb(\Delta_{X^{r+1}},f^{(\mathbf{a})})} = X^{r+1}$,
since $U_0$, $U_1, \dotsc, U_r$ are arbitrary.
\end{proof}

\section{Multi-minimality with respect to a vector}
\begin{de}
Let $(X,f)$ be a dynamical system and $\mathbf{a}=(a_1,a_2,\dotsc,a_r)\in\mathbb{N}^r_*$.
We say that the  system $(X,f)$ is
\begin{enumerate}
\item \emph{multi-minimal with respect to the vector $\mathbf{a}$}
(or briefly \emph{$\mathbf{a}$-minimal})
if the product system $(X^r,f^{(\mathbf{a})})$ is minimal.
\item \emph{multi-minimal} if
it is  multi-minimal with respect to $(1,2,\dotsc,n)$ for every $n\in\mathbb{N}$.
\end{enumerate}
\end{de}

The fourth question proposed in \cite{D-Kwietniak-P-Oprocha-2010} is the following:
\begin{ques}
Is there any non-trivial characterization of multi-minimality in terms of some
dynamical properties?
\end{ques}

In this section, we will answer this question by providing a characterization of multi-minimality
by $\mathcal{F}$-point transitivity.
First, we show some basic properties of multi-minimality.
\begin{prop}\label{pro:multimini-equi-k-iterate-multi-mini}
Let $(X,f)$ be a dynamical system.
Then $(X,f)$ is multi-minimal if and only if for every $r\in\mathbb{N}$,
$(X,f^r)$ is multi-minimal.
\end{prop}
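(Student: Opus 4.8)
The plan is to prove the nontrivial implication by exhibiting each product system needed for ``$(X,f^{r})$ is multi-minimal'' as a factor of one of the product systems supplied by multi-minimality of $(X,f)$, and then invoking the elementary fact that a factor of a minimal system is minimal.

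The reverse implication is immediate: specializing the hypothesis to $r=1$ gives that $(X,f)=(X,f^{1})$ is multi-minimal. For the forward implication, assume $(X,f)$ is multi-minimal and fix $r\in\mathbb{N}$. Unwinding the definition with $g=f^{r}$, to prove that $(X,f^{r})$ is multi-minimal it suffices to show, for each $n\in\mathbb{N}$, that $(X^{n},f^{r}\times f^{2r}\times\dotsb\times f^{nr})$ is minimal. I would apply multi-minimality of $(X,f)$ with index $N:=nr$, which gives that $(X^{N},f\times f^{2}\times\dotsb\times f^{N})$ is minimal, and then consider the projection $\pi\colon X^{N}\to X^{n}$ that retains only the coordinates whose index is a multiple of $r$, namely $\pi(z_{1},\dotsc,z_{N})=(z_{r},z_{2r},\dotsc,z_{nr})$. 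Comparing coordinates shows that $\pi\circ\bigl(f\times f^{2}\times\dotsb\times f^{N}\bigr)=\bigl(f^{r}\times f^{2r}\times\dotsb\times f^{nr}\bigr)\circ\pi$, so $\pi$ is a continuous surjective equivariant map, i.e.\ a factor map from $(X^{N},f\times\dotsb\times f^{N})$ onto $(X^{n},f^{r}\times\dotsb\times f^{nr})$. Since the preimage under a factor map of a nonempty closed invariant set is again a nonempty closed invariant set, minimality of the domain forces minimality of the codomain; hence $(X^{n},f^{r}\times\dotsb\times f^{nr})$ is minimal, and, $n$ being arbitrary, $(X,f^{r})$ is multi-minimal.

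No step here is genuinely hard; the only thing requiring care is the index bookkeeping, i.e.\ checking that deleting from $(X^{N},f\times\dotsb\times f^{N})$ every coordinate whose index is not a multiple of $r$ yields \emph{exactly} the system $(X^{n},f^{r}\times f^{2r}\times\dotsb\times f^{nr})$ and that $\pi$ really intertwines the two maps. Alternatively one could argue via the Gottschalk--Hedlund criterion, testing $\mathcal{F}_{s}$-recurrence of the diagonal points in the relevant powers, but the factor argument is shorter and uses no family-theoretic input.
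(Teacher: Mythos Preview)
Your proof is correct and follows essentially the same route as the paper: both argue that $(X^{n},f^{r}\times f^{2r}\times\dotsb\times f^{nr})$ is a factor of the minimal system $(X^{nr},f\times f^{2}\times\dotsb\times f^{nr})$ via the coordinate projection onto the multiples of $r$, and conclude by the fact that factors of minimal systems are minimal. Your write-up is somewhat more explicit about the factor map and the intertwining relation, but the idea is identical.
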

\begin{proof}
The sufficiency is obvious. We are left to show the necessity.
Fix an integer $r\in\mathbb{N}$.
We are going to prove that $(X,f^r)$ is multi-minimal.
Let $n\in\mathbb{N}$ and $\mathbf{a}_n=(1,2,\dotsc,n)$.
It suffices to show that $(X,f^r)$ is multi-minimal respect to $\mathbf{a}_n$, i.e.,
to show that $(X^n,(f^{r})^{(\mathbf{a}_n)})$ is minimal.
Since $(X,f)$ is multi-minimal,
$(X,f)$ is multi-minimal respect to $\mathbf{a}_{rn}$, i.e.,
$(X^{nr},f\times f^2\times\cdots\times f^{nr})$ is minimal.
Thus $(X^n,(f^{r})^{(\mathbf{a}_n)})$ is minimal, since it is a factor of $(X^{nr},f^{(\mathbf{a}_{nr})})$.
\end{proof}

\begin{rem}
The POD  (proximal orbit dense) flows are examples of non-periodic 
multi-minimal systems (see~\cite{D-Kwietniak-P-Oprocha-2010,POD-flow}).
There exists a strongly mixing system which is not $\Delta$-transitive~\cite{Moothathu-2010},
then this system is also not multi-minimal.
Hence by Proposition~\ref{pro:Not-F-trans-iter},
the collection of multi-minimal systems can not be classified by $\mathcal{F}$-transitivity.
\end{rem}

\begin{de}
Let $\mathbf{a}=(a_1,a_2,\dotsc,a_r)$ be a vector in $\mathbb{N}^r$.
We define \emph{the family generated by the vector $\mathbf{a}$ and $\mathcal{F}_{s}$},
denoted by $\mathcal{F}_s[\mathbf{a}]$, as
\begin{align*}
\{F\subset\mathbb{N}:\ &\text{for every }n_1, n_2, \dotsc,n_{r}\in\mathbb{Z}_+,\text{ there exists a syndetic set }F'\subset\mathbb{N}\\
&\text{such that }a_1F'+n_1,a_2F'+n_2,\dotsc,a_rF'+n_{r}\subset F \}.
\end{align*}
\end{de}
Using the family $\mathcal{F}_s[\mathbf{a}]$, we have the following characterization of multi-minimality
with respect to $\mathbf{a}$.

\begin{thm}\label{thm:mul-minimal-vector-Family}
Let $(X,f)$ be a dynamical system and $\mathbf{a}=(a_1,a_2,\dotsc,a_r)\in\mathbb{N}_*^r$.
Then the following conditions are equivalent.
\begin{enumerate}
\item $(X,f)$ is multi-minimal with respect to $\mathbf{a}$;
\item  $(X,f)$ is $\mathcal{F}_s[\mathbf{a}]$-point transitive;
\item $Trans_{\mathcal{F}_s[\mathbf{a}]}(X,f)=X$.
\end{enumerate}
\end{thm}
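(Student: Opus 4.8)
The plan is to prove the cycle $(1)\Rightarrow(3)\Rightarrow(2)\Rightarrow(1)$, following the scheme of the proof of Theorem~\ref{thm:triangle-trans-vector-Family} but systematically replacing ``transitive point'' by ``minimal point'' and, accordingly, the single translate $k\mathbf{a}+\mathbf{n}$ supplied by $\mathcal{F}[\mathbf{a}]$ by the syndetic family of translates $a_1F'+n_1,\dots,a_rF'+n_r$ supplied by $\mathcal{F}_s[\mathbf{a}]$. I will use freely that $\mathcal{F}_s$ is translation $\pm$ invariant and hereditary upward, together with the standard fact that in any minimal system $(Y,g)$ the hitting time set $N_g(y,W)$ of a point $y\in Y$ into a non-empty open set $W\subset Y$ is syndetic (by minimality $Y=\bigcup_{n\ge1}g^{-n}(W)$, extract a finite subcover by compactness, and read off a uniform gap bound).

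$(1)\Rightarrow(3)$. Assume $(X^r,f^{(\mathbf{a})})$ is minimal and fix $x\in X$, a non-empty open $U\subset X$, and $n_1,\dots,n_r\in\mathbb{Z}_+$. Let $\bar x=(f^{n_1}(x),\dots,f^{n_r}(x))\in X^r$ and $W=U\times\dots\times U$. By the fact just recalled, $F':=N_{f^{(\mathbf{a})}}(\bar x,W)$ is syndetic, and for every $k\in F'$ and every $i$ one has $f^{a_ik+n_i}(x)\in U$, that is, $a_iF'+n_i\subset N_f(x,U)$. Hence $N_f(x,U)\in\mathcal{F}_s[\mathbf{a}]$; as $U$ and the $n_i$ are arbitrary, $x\in Trans_{\mathcal{F}_s[\mathbf{a}]}(X,f)$, and as $x$ is arbitrary, $Trans_{\mathcal{F}_s[\mathbf{a}]}(X,f)=X$. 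The implication $(3)\Rightarrow(2)$ is immediate.

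$(2)\Rightarrow(1)$. Fix $x\in Trans_{\mathcal{F}_s[\mathbf{a}]}(X,f)$. Since $\emptyset$ is not syndetic, $\emptyset\notin\mathcal{F}_s[\mathbf{a}]$, so every $N_f(x,U)$ is non-empty; thus $x$ is a transitive point and $(X,f)$ is transitive. Given non-empty open $U_1,\dots,U_r\subset X$, I run the alignment step of Theorem~\ref{thm:triangle-trans-vector-Family}: applying transitivity of $(X,f)$ $(r-1)$ times produces $\ell_1,\dots,\ell_{r-1}\in\mathbb{N}$ and a non-empty open set
\[U=U_1\cap f^{-\ell_1}(U_2)\cap f^{-(\ell_1+\ell_2)}(U_3)\cap\dots\cap f^{-(\ell_1+\dots+\ell_{r-1})}(U_r).\]
Set $\ell=\ell_1+\dots+\ell_{r-1}$ and $n_i=a_i\ell-(\ell_1+\dots+\ell_{i-1})$ for $1\le i\le r$ (with the empty sum equal to $0$); since $a_i\ge1$ we get $a_i\ell\ge\ell\ge\ell_1+\dots+\ell_{i-1}$, so each $n_i\in\mathbb{Z}_+$. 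Because $N_f(x,U)\in\mathcal{F}_s[\mathbf{a}]$, there is a syndetic $F'$ with $a_iF'+n_i\subset N_f(x,U)$ for all $i$. For $k\in F'$ and each $i$, $f^{a_ik+a_i\ell-(\ell_1+\dots+\ell_{i-1})}(x)\in U\subset f^{-(\ell_1+\dots+\ell_{i-1})}(U_i)$, hence $f^{a_i(k+\ell)}(x)\in U_i$; therefore $F'+\ell\subset N_{f^{(\mathbf{a})}}\bigl((x,\dots,x),\,U_1\times\dots\times U_r\bigr)$. Since $F'+\ell$ is syndetic and $\mathcal{F}_s$ is hereditary upward, the hitting time set of the diagonal point $(x,\dots,x)$ into every product open set $U_1\times\dots\times U_r$ is syndetic; as every non-empty open $W\subset X^r$ contains such a product set, this makes $(x,\dots,x)$ a transitive point of $(X^r,f^{(\mathbf{a})})$, and taking $W$ to be a basic neighborhood of $(x,\dots,x)$ shows, via the Gottschalk--Hedlund characterization of minimal points~\cite{GH55}, that $(x,\dots,x)$ is a minimal point. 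Consequently $\overline{Orb((x,\dots,x),f^{(\mathbf{a})})}=X^r$ is a minimal set, i.e.\ $(X^r,f^{(\mathbf{a})})$ is minimal.

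The main obstacle is the bookkeeping in $(2)\Rightarrow(1)$ --- keeping the constants $\ell_i$, the shifts $n_i$, and the exponents straight and checking $n_i\in\mathbb{Z}_+$ --- together with the conceptual observation that the single syndetic witness $F'$ simultaneously yields the density of the orbit of $(x,\dots,x)$ (non-empty hitting times, hence transitivity) and the syndeticity of its return times (hence minimality of the point), and that these two facts combine to give minimality of $(X^r,f^{(\mathbf{a})})$. The direction $(1)\Rightarrow(3)$ is routine once the ``hitting times are syndetic in minimal systems'' lemma is available.
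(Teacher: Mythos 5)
Your proof is correct and follows essentially the same route as the paper's: the same reduction of $(1)\Rightarrow(3)$ to syndeticity of return times in the minimal product system, and the same alignment construction with the set $U=U_1\cap f^{-\ell_1}(U_2)\cap\dotsb\cap f^{-(\ell_1+\dotsb+\ell_{r-1})}(U_r)$ and shifts $n_i=a_i\ell-(\ell_1+\dotsb+\ell_{i-1})$ in $(2)\Rightarrow(1)$. The only cosmetic differences are that in $(1)\Rightarrow(3)$ you push the point forward to $(f^{n_1}(x),\dotsc,f^{n_r}(x))$ rather than pulling $U$ back by $f^{-n_i}$, and at the end you unpack Lemma~\ref{lem:minimal-Fs} (transitive point plus $\mathcal F_s$-recurrence gives minimality) instead of citing it.
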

\begin{proof}
(1) $\Rightarrow$ (3) Assume that $(X,f)$ is multi-minimal with respect to $\mathbf{a}$.
Fix a point $x\in X$. We want to show that $x$ is an $\mathcal{F}_s[\mathbf{a}]$-transitive point.
Let $U$ be a non-empty open subset of $X$ and $n_1, n_2, \dotsc, n_r \in \mathbb{Z}_+$.
Since $(X^r,f^{(\mathbf{a})})$ is minimal, by Lemma~\ref{lem:minimal-Fs} $(x,x,\dots,x)$
is an $\mathcal{F}_s$-transitive point of $(X^r,f^{(\mathbf{a})})$.
Then there exists a syndetic set $F$ such that
\begin{align*}
  F&\subset N_{f^{(\mathbf{a})}}((x,x,\dotsc,x),f^{-n_1}(U)\times f^{-n_2}(U)\times\dotsb\times f^{-n_r}(U))\\
  &=N_{f^{a_1}}(x,f^{-n_1}(U))\cap N_{f^{a_2}}(x,f^{-n_2}(U))\times\dotsb\times\cap  N_{f^{a_r}}(x,f^{-n_r}(U)),
\end{align*}
that is
$$a_1F+n_1, a_2F+n_2,\dotsc, a_rF+n_r\subset N(x,U).$$
Thus $N(x,U)\in \mathcal{F}_s[\mathbf{a}]$ and  $x$ is an $\mathcal{F}_s[\mathbf{a}]$-transitive point of $(X,f)$.

(3) $\Rightarrow$ (2) is obvious.

(2) $\Rightarrow$ (1) Assume that $(X,f)$ is $\mathcal{F}_s[\mathbf{a}]$-point transitive.
Let $U_1$, $U_2,\dotsc, U_r$ be non-empty open subsets of $X$ and $x\in Trans_{\mathcal{F}_s[\mathbf{a}]}(X,f)$.
Applying the transitivity of $(X,f)$ to the two non-empty open sets $U_{r-1}$ and $U_r$,
we pick an $\ell_{r-1}\in\mathbb{N}$ such that
$$U_{r-1}\cap f^{-\ell_{r-1}}(U_r)\neq\emptyset.$$
Now applying the transitivity of $(X,f)$ to the two non-empty open sets
$U_{r-2}$ and $U_{r-1}\cap f^{-\ell_{r-1}}(U_r)$,
we pick an $\ell_{r-2}\in\mathbb{N}$ such that
$$U_{r-2}\cap f^{-\ell_{r-2}}(U_{r-1}\cap f^{-\ell_{r-1}}(U_r))\neq\emptyset.$$
After repeating this process $(r-1)$ times,
we obtain a sequence $\{\ell_i\}_{i=1}^{r-1}$ of positive integers and
\begin{align*}
U:=&U_1\cap f^{-\ell_{1}}\Bigl(U_{2}\cap f^{-\ell_{2}}\bigl(\dotsb\cap(U_{r-1}\cap f^{-\ell_{r-1}}(U_r))\bigr)\Bigr)\\
=& U_1\cap f^{-\ell_{1}}(U_{2})\cap f^{-(\ell_{1}+\ell_{2})}(U_{3})\cap\dotsb\cap
 f^{-(\ell_{1}+\ell_{2}+\dotsb+\ell_{r-1})}(U_r)\neq\emptyset.
\end{align*}
Put $\ell:=\ell_{1}+\ell_{2}+\dotsb+\ell_{r-1}$.
Since $N_f(x,U)\in\mathcal{F}_s[\mathbf{a}]$,
for positive integers $a_1\ell$,
$a_2\ell-\ell_{1}$, $a_3\ell-\ell_{1}-\ell_{2}, \dotsc, a_r\ell-\ell_{1}-\ell_{2}-\dotsb-\ell_{r-1}$,
there exists a syndetic set $F$ such that
$$\{a_1F+a_1\ell, a_2F+a_2\ell-\ell_{1},\dotsc, a_rF+a_r\ell-\ell_{1}-\ell_{2}-\dotsb-\ell_{r-1}\}\subset N_f(x,U).$$
Then
\begin{align*}
  &a_1F+a_1\ell\subset N_f(x,U_1),\\
  &a_2F+a_2\ell-\ell_{1}\subset N_f(x,f^{-\ell_{1}}(U_2)),\\
  &\dotsc \\
  &a_rF+a_r\ell-\ell_{1}-\ell_{2}-\dotsb-\ell_{r-1}\subset N_f(x,f^{-(\ell_{1}+\ell_{2}+\dotsb+\ell_{r-1})}(U_r))\\
\end{align*}
which imply that
$$F+\ell\subset N_{f^{(\mathbf{a})}}((x,x,\dotsc,x), U_1\times U_2\times\dotsb\times U_r).$$
Clearly, $F+\ell$ is also a syndetic set.
Thus $(x,x,\dotsc,x)$ is an $\mathcal{F}_s$-transitive point of $(X^r,f^{(\mathbf{a})})$.
By Lemma~\ref{lem:minimal-Fs}, $(X^r,f^{(\mathbf{a})})$ is minimal.
\end{proof}

Let $\mathcal{F}_s[\infty]=\bigcap\limits_{i=1}^\infty \mathcal{F}_s[\mathbf{a}_i]$ for $i\in \mathbb{N}$,
where $\mathbf{a}_i=(1,2,\dotsc,i)$.
It is easy to check that a subset $F$ of $\mathbb{N}$ is in $\mathcal{F}_s[\infty]$ if and only if
for every $r\in\mathbb{N}$ and every $n_1, n_2, \dotsc, n_{r}\in \mathbb{Z}_+$, there exists a syndetic set $F'$
such that $F'+n_1,2F'+n_2,\dotsc,rF'+n_{r}\subset F$. We have the following characterization of multi-minimality.

\begin{thm}
Let $(X,f)$ be a dynamical system.
Then the following conditions are equivalent.
\begin{enumerate}
\item $(X,f)$ is multi-minimal;
\item  $(X,f)$ is $\mathcal{F}_s[\infty]$-point transitive;
\item $Trans_{\mathcal{F}_s[\infty]}(X,f)=X$.
\end{enumerate}
\end{thm}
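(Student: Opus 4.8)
The plan is to reduce the statement to the vector-wise characterization already established in Theorem~\ref{thm:mul-minimal-vector-Family}, since by construction $\mathcal{F}_s[\infty]=\bigcap_{i=1}^\infty\mathcal{F}_s[\mathbf{a}_i]$ with $\mathbf{a}_i=(1,2,\dotsc,i)$. The one elementary observation needed is that, for any countable collection of families $\mathcal{G}_i$ with $\mathcal{G}=\bigcap_i\mathcal{G}_i$, a point $x\in X$ lies in $Trans_{\mathcal{G}}(X,f)$ if and only if it lies in $Trans_{\mathcal{G}_i}(X,f)$ for every $i$: indeed, $N(x,U)\in\mathcal{G}$ for every non-empty open $U$ says precisely that $N(x,U)\in\mathcal{G}_i$ for every such $U$ and every $i$. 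Applying this with $\mathcal{G}_i=\mathcal{F}_s[\mathbf{a}_i]$ yields the identity $Trans_{\mathcal{F}_s[\infty]}(X,f)=\bigcap_{i=1}^\infty Trans_{\mathcal{F}_s[\mathbf{a}_i]}(X,f)$, which I would state at the outset and then use for all three implications.

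For $(1)\Rightarrow(3)$, I would note that each $\mathbf{a}_i=(1,2,\dotsc,i)$ belongs to $\mathbb{N}^i_*$, so that if $(X,f)$ is multi-minimal it is, by definition, multi-minimal with respect to every $\mathbf{a}_i$; Theorem~\ref{thm:mul-minimal-vector-Family} then gives $Trans_{\mathcal{F}_s[\mathbf{a}_i]}(X,f)=X$ for each $i$, and intersecting over $i$ via the identity above gives $Trans_{\mathcal{F}_s[\infty]}(X,f)=X$. The implication $(3)\Rightarrow(2)$ is immediate because $X\neq\emptyset$, so there is an $\mathcal{F}_s[\infty]$-transitive point. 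For $(2)\Rightarrow(1)$, I would take an $\mathcal{F}_s[\infty]$-transitive point $x$; by the identity above $x\in Trans_{\mathcal{F}_s[\mathbf{a}_i]}(X,f)$ for every $i$, so $(X,f)$ is $\mathcal{F}_s[\mathbf{a}_i]$-point transitive, hence multi-minimal with respect to $\mathbf{a}_i$ by Theorem~\ref{thm:mul-minimal-vector-Family}; since $i\in\mathbb{N}$ is arbitrary, $(X,f)$ is multi-minimal. (One could alternatively inline the direct argument, invoking the Gottschalk--Hedlund characterization of minimal points as $\mathcal{F}_s$-recurrent points together with the iterated-transitivity ``funnel'' construction from the proof of Theorem~\ref{thm:mul-minimal-vector-Family} and Lemma~\ref{lem:minimal-Fs}, but routing through the vector case is cleaner.)

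I do not expect a genuine obstacle here: all of the substantive dynamical content — the passage between minimality of $(X^r,f^{(\mathbf a)})$ and the syndetic entering-time condition defining $\mathcal{F}_s[\mathbf{a}]$ — is already packaged in Theorem~\ref{thm:mul-minimal-vector-Family}, so the remaining work is purely the bookkeeping of the intersection identity and the (definitional) fact that multi-minimality means multi-minimality with respect to each $(1,2,\dotsc,n)$. The only point deserving a line of care is making sure the quantifier order in ``for every open $U$ and every $i$'' is handled correctly when establishing $Trans_{\mathcal{F}_s[\infty]}(X,f)=\bigcap_i Trans_{\mathcal{F}_s[\mathbf{a}_i]}(X,f)$.
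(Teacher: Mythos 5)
Your proposal is correct and follows essentially the same route as the paper: both reduce the statement to Theorem~\ref{thm:mul-minimal-vector-Family} via the identity $Trans_{\mathcal{F}_s[\infty]}(X,f)=\bigcap_{i=1}^\infty Trans_{\mathcal{F}_s[\mathbf{a}_i]}(X,f)$, with $(3)\Rightarrow(2)$ immediate and the other two implications obtained by intersecting, respectively specializing to, the vectors $\mathbf{a}_i$. No gaps.
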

\begin{proof}
(1) $\Rightarrow$ (3)
Assume that $(X,f)$ is multi-minimal. By Theorem~\ref{thm:mul-minimal-vector-Family},
we have
\[Trans_{\mathcal{F}_s[\mathbf{a}_i]}(X,f)=X,  \text{ for every } i\in\mathbb{N}.\]
Now the result follows from the fact that
\[Trans_{\mathcal{F}_s[\infty]}(X,f)=\bigcap\limits_{i=1}^\infty Trans_{\mathcal{F}_s[\mathbf{a}_i]}(X,f).\]

(3) $\Rightarrow$ (2) is obvious.

(2) $\Rightarrow$ (1)  Assume that $(X,f)$ is $\mathcal{F}_s[\infty]$-point transitive.
Let $x\in Trans_{\mathcal{F}_s[\infty]}(X,f)$ and $U$ be a non-empty open subset of $X$.
Then $N(x,U)\in\mathcal{F}_s[\infty]$, which implies that
$N(x,U)\in\mathcal{F}_s[\mathbf{a}_i]$ for every $i\in\mathbb{N}$.
Hence $(X,f)$ is $\mathcal{F}_s[\mathbf{a}_i]$-point transitive for every $i\in\mathbb{N}$.
By Theorem~\ref{thm:mul-minimal-vector-Family}, it is multi-minimal.
\end{proof}

\medskip
\noindent\emph{Acknowledgements.}
The first and third author were supported by National Nature Science Funds of China (11071084).
The second author was supported in part by STU Scientific Research Foundation for Talents (NTF12021) and
National Natural Science Foundation of China (11171320).
The authors would like to thank to the anonymous referee, whose remarks resulted in
substantial improvements to this paper, in particular for his contribution to Proposition~\ref{prop:delta-transitive-set}.

\end{document}